%% file: ArXiV-main.tex
\documentclass{amsart}

\newcommand{\tempnewpage}{}

\newcommand{\real}{\realSp}
\newcommand{\realR}{\realSp^*}
\newcommand{\realSp}{\ensuremath\text{\upshape\textsc{real}}}

\DeclareMathOperator{\RelO}{\ensuremath\text{\upshape\textsc{RelO}}}

\DeclareMathOperator{\mreal}{\ensuremath\text{\upshape\textsc{mreal}}}

\newcommand{\VF}{\ensuremath{V\!F}}

\newcommand{\Y}{\ensuremath{\mathrm{Y}}}

\newcommand{\DY}{\ensuremath{\Delta\kern-1.2pt \Y}}
\newcommand{\YD}{\ensuremath{\Y\kern-1.2pt\Delta}}

\input{header}

\begin{document}

	
	
\title[Deformations and second-order rigidity of polytopes]%
{Deformations and second-order rigidity of polytopes}
		
\author[M.\ Adrian-Himmelmann]{Matthias Adrian-Himmelmann}
\address{Institut für Analysis und Algebra, Technische Universität Braunschweig, Braunschweig, Germany}
\email{matthias.himmelmann@tu-braunschweig.de}

\author[M.\ Winter]{Martin Winter}
\address{Nonlinear Algebra Group, Max-Planck-Institute for Mathematics in the Sciences, Leipzig, Germany}
\email{martin.winter@mis.mpg.de}

\author[\;Z.\ Zhang]{Zhen Zhang}
\address{Yau Mathematical Sciences Center, Tsinghua University, Haidian District, Beijing, China}
\email{zz628@tsinghua.edu.cn}

\subjclass[2010]{51M20, 52C25, 52B11, 14Q30, 65H14}
\keywords{convex polytopes, rigidity and flexibility, point hyperplane frameworks, second-order rigidity, realization spaces, computational algebraic geometry}
		
\date{\today}
\begin{abstract}


We study deformations of polytopes that preserve edge lengths and face coplanarities.
This gives rise to a notion of rigidity, for which we develop a \textit{second-order theory} together with an effective algorithm for testing second-order rigidity symbolically.
The strength of these new tools is demonstrated on several polytope classes, including the previously intractable regular dodecahedron, which we find to be rigid.
We also find that a single tested polytope evades our techniques and will therefore provide a simple test case for future developments of tools of even higher order.

This paper also contains a study of so-called \textit{edge-length perturbations}. We point out connections between rigidity and the ability to realize polytopes with slightly perturbed edge lengths.
To explore these connections in practice, we dedicate a section to another case study of the regular dodecahedron.
We investigate the local behavior of its realization space with a view towards edge-length perturbations, singularities and generic global rigidity.


\end{abstract}


\vspace*{-1.5em}
\maketitle

\input{sec/introduction}

\par\bigskip
{
\parindent 0pt
\textbf{Funding.} 
Martin Winter was supported as Dirichlet Fellow by the Berlin Mathematics Research Center MATH\raisebox{0.25ex}{$+$} and the Berlin Mathematical School, funded by the Deutsche Forschungsgemeinschaft (DFG, German Research Foundation) under Germany’s Excellence Strategy (EXC-2046/1, project ID 390685689), and furthermore by the SPP 2458 ``Combinatorial Synergies'' (project ID 539851419), funded by the Deutsche Forschungsgemeinschaft (DFG, German Research Foundation).
}

\par\bigskip
{
\parindent 0pt
\textbf{Acknowledgements.} We are grateful to Bob Connelly and Herman Servatius for helpful discussions about computational and constructive approaches for determining the rigidity of geometric constraint systems. 
We also thank Bob Connelly and Simon Guest for granting us permission to use their visualization of the flat dodecahedron.
The authors thank Louis Theran for his ideas and discussions about an energy-based approach that can potentially let us determine the higher-order rigidity of the truncated dodecahedron.
We are also grateful to Bernd Schulze for helping pave the way for demonstrating the rigidity of the regular dodecahedron by contributing various useful ideas and constructions.
}

\bibliographystyle{abbrv}
\bibliography{references}

\vspace*{1em}
\addresseshere

\newpage

\input{sec/appendix}

\end{document}

%% file: header.tex
\usepackage[T1]{fontenc}
\usepackage[utf8]{inputenc}
\usepackage[USenglish]{babel}
\usepackage[
	bookmarks=true,
	plainpages=false,
	linktocpage,
	colorlinks=true,
	citecolor=green!80!black,
	linkcolor=red!70!black,
	filecolor=magenta,
	urlcolor=magenta,
	breaklinks,
	pdfauthor={Martin Winter},
]{hyperref}
\usepackage{calc, mathtools} 

\usepackage{float}

\usepackage{amssymb} 
\usepackage{footnote}
\usepackage[dvipsnames]{xcolor}
\definecolor{DarkGreen}{rgb}{0,0.7,0}
\definecolor{DarkRed}{rgb}{0.825,0,0}
\definecolor{DarkOrange}{rgb}{1,0.55,0}
\definecolor{SteelBlue}{rgb}{0.27, 0.51, 0.71}
\definecolor{NiceBlue}{rgb}{0.15, 0.2, 0.75}
\definecolor{NiceBlueLight}{RGB}{249, 249, 253}
\definecolor{Lightbackgroundred}{RGB}{252, 247, 247}

\newcommand{\Def}[1]{\textcolor{NiceBlue!90}{\textit{#1}}}
\newcommand\cincludegraphics[2][]{\hspace*{-3mm}
\makecell[c]{~\\[-4.05mm]
\colorbox{white!00}{\includegraphics[#1,height=0.12\linewidth]{#2}}}\hspace*{-1mm}}

\usepackage{makecell}
\usepackage{pifont}
\newcommand{\yes}{\textcolor{DarkGreen}{\ding{51}}}
\newcommand{\no}{\textcolor{DarkRed}{\ding{55}}}

\def\yesBut{\textcolor{Gray}{\ding{51}${}^*$}}
\def\noBut{\textcolor{Gray}{\ding{55}${}^*$}}

\newcommand{\Ques}{\textcolor{DarkOrange}{$\textbf{?}$}}
\newcommand{\SymOne}{\textcolor{SteelBlue}{\ding{67}}}
\newcommand{\SymTwo}{\textcolor{SteelBlue}{\ding{115}}}
\newcommand{\SymThree}{\textcolor{SteelBlue}{\ding{95}}}

\newcommand{\Star}{\textcolor{orange}{\ding{81}}}

\usepackage{centernot} 
\usepackage{array} 
\usepackage{enumitem,moreenum} 

\usepackage[noadjust]{cite} 
\usepackage[nameinlink,capitalize,noabbrev]{cleveref}
\usepackage{nicefrac}

\usepackage[font=small,labelfont=bf]{caption}

\usepackage[
    colorinlistoftodos,
    backgroundcolor=yellow,
    textsize=footnotesize 
]{todonotes}

\usepackage{newfloat}

\DeclareFloatingEnvironment[
  fileext=lor,
  listname={List of Roadmaps},
  name=Roadmap,
  placement=htbp
]{roadmap}

\makeatletter
\renewcommand{\fnum@roadmap}{Roadmap}
\makeatother

\crefformat{roadmap}{#2roadmap#3}
\Crefformat{roadmap}{#2Roadmap#3}

\usepackage{accents}
\newlength{\dhatheight}

\newcommand*{\Scale}[2][4]{\scalebox{#1}{$#2$}}

\usepackage{tikz-cd}
\usetikzlibrary{fit,backgrounds}
\usetikzlibrary{decorations.markings}

\tikzset{
  myarrow/.style={-{Triangle[length=1.1mm,width=1.3mm]}}
}

\tikzset{
  over/.style={
    postaction={
      decorate,
      decoration={
        markings,
        mark=at position 0.67675 with {
          \draw[white,line width=2pt] (-1.4pt,0) -- (1.4pt,0);
        }
      }
    }
  }
}

\tikzset{
  over2/.style={
    postaction={
      decorate,
      decoration={
        markings,
        mark=at position 0.63 with {
          \draw[white, line width=2pt] (-1.4pt,0) -- (1.4pt,0);
        }
      }
    }
  }
}

\newcommand{\RR}{\mathbb{R}}    
                   
\newcommand{\bs}[1]{\boldsymbol{#1}}
\newcommand{\bsdot}[1]{\dot{\boldsymbol{#1}}}
\newcommand{\bsddot}[1]{\ddot{\boldsymbol{#1}}}

\newcommand{\tildebs}[1]{\tilde{\boldsymbol{{#1}}}}
\newcommand{\tilbs}[1]{\tildebs{#1}}

\newcommand{\hatij}{{\ensuremath{\kern1pt\widehat{\kern-1pt\imath\kern-2.3pt\jmath}}}}

\makeatletter
\newcommand{\subalign}[1]{%
  \vcenter{%
    \Let@ \restore@math@cr \default@tag
    \baselineskip\fontdimen10 \scriptfont\tw@
    \advance\baselineskip\fontdimen12 \scriptfont\tw@
    \lineskip\thr@@\fontdimen8 \scriptfont\thr@@
    \lineskiplimit\lineskip
    \ialign{\hfil$\m@th\scriptstyle##$&$\m@th\scriptstyle{}##$\hfil\crcr
      #1\crcr
    }%
  }%
}
\makeatother

\newcommand{\Tsymb}{\top}
\newcommand{\T}{^{\Tsymb}}

\def\:{\colon}

\newcommand{\labelstyle}[1]{\upshape(\textit{#1})}
\newcommand{\mylabel}{\labelstyle{\roman*}}
\newenvironment{myenumerate}{\begin{enumerate}[label=\mylabel]}{\end{enumerate}}

\def\nlspace{\nolinebreak\space}
\def\nls{\nlspace}

\newcommand{\freespace}{\kern.07em}

\newcommand{\customdoubleline}{\cline{1-3}\cline{5-7}
\multicolumn{7}{c}{} \\[-4.75mm]
\cline{1-3}\cline{5-7}}

\newcommand{\customdoublelinetwo}{\cline{1-3}\cline{5-7}
\multicolumn{5}{c}{} \\[-2.5mm]
\cline{1-3}\cline{5-7}}

\newcommand{\customsingletwo}{\cline{1-3}
\multicolumn{4}{c}{} \\[-2.5mm]
\cline{1-3}\cline{5-7}}

\usepackage{ragged2e}
\usepackage{amsmath,amsthm}

\numberwithin{equation}{section}

\newtheoremstyle{mythmstyle} 
    {\parsep}                    
    {\parsep}                    
    {\itshape}                   
    {}                           
    {\bfseries\scshape}          
    {.}                          
    {.5em}                       
    {}  
\newtheoremstyle{mydefstyle} 
    {\parsep}                    
    {\parsep}                    
    {}                   
    {}                           
    {\mdseries\scshape}          
    {.}                          
    {.5em}                       
    {}  

\theoremstyle{plain}
\newtheorem{theorem}{Theorem}[section]
\newtheorem{corollary}[theorem]{Corollary}
\newtheorem{lemma}[theorem]{Lemma}
\newtheorem{proposition}[theorem]{Proposition}

\theoremstyle{definition}
\newtheorem{definition}[theorem]{Definition}
\newtheorem{example}[theorem]{Example}
\newtheorem{remark}[theorem]{Remark}
\newtheorem{question}[theorem]{Question}

\crefname{theorem}{Theorem}{Theorems}
\crefname{proposition}{Proposition}{Propositions}
\crefname{lemma}{Lemma}{Lemmas}
\crefname{corollary}{Corollary}{Corollaries}
\crefname{remark}{Remark}{Remarks}
\crefname{example}{Example}{Examples}
\crefname{definition}{Definition}{Definitions}
\crefname{problem}{Problem}{Problems}
\crefname{observation}{Observation}{Observation}
\crefname{construction}{Construction}{Construction}

\theoremstyle{plain}
\newtheorem{innercustomgeneric}{\customgenericname}
\providecommand{\customgenericname}{}
\newcommand{\newcustomtheorem}[2]{%
  \newenvironment{#1}[1]
  {%
   \renewcommand\customgenericname{#2}%
   \renewcommand\theinnercustomgeneric{##1}%
   \innercustomgeneric
  }
  {\endinnercustomgeneric}
}

\newcustomtheorem{theoremX}{Theorem}
\newcustomtheorem{lemmaX}{Lemma}
\newcustomtheorem{conjectureX}{Conjecture}

\DeclareMathOperator{\conv}{conv}

\DeclareMathOperator{\OO}{O}

\DeclareMathOperator{\GL}{GL}

\DeclareMathOperator{\rank}{rank}
\DeclareMathOperator{\corank}{corank}

\DeclareMathOperator{\Int}{int}

\DeclareMathOperator{\Iso}{Iso}

\let\<=\langle
\let\>=\rangle
\let\x=\times

\def\...{...}
\newcommand{\shortStyle}{\text}
\newcommand{\ie}{\shortStyle{i.e.,}}

\newcommand{\eg}{\shortStyle{e.g.}}

\newcommand{\cf}{\shortStyle{cf.}}

\makeatletter
\renewcommand*{\eqref}[1]{%
  \hyperref[{#1}]{\textup{\tagform@{\ref*{#1}}}}%
}
\makeatother

\allowdisplaybreaks

\makeatletter
\newcommand{\addresseshere}{%
  \enddoc@text\let\enddoc@text\relax
}
\makeatother

%% file: sec/introduction.tex

\vspace*{-1em}
\section{Introduction}
\label{sec:introduction}

In \cite{himmelmannschulzewinter2025rigiditypolytopesedgelength} the authors initiated the study of convex polytopes as point-hyperplane frameworks and asked under what conditions a polytope admits a continuous deformation that preserves both its edge lengths and face coplanarities.
If such deformations exist, the polytope is said to be flexible, otherwise rigid.
Classical results in polyhedral rigidity (such as the rigidity theorems by Cauchy \cite{cauchy1813} or Dehn \cite{dehnstheorem}) enforce the stronger constraint of fixed face shapes, 
and then find that convexity generally implies rigidity.
This is in contrast to our setting, in which flexible convex polytopes do exist.
Still, only
few techniques for
their construction are
known.

\begin{figure}[h!] 
    \centering
\includegraphics[width=0.235\linewidth]{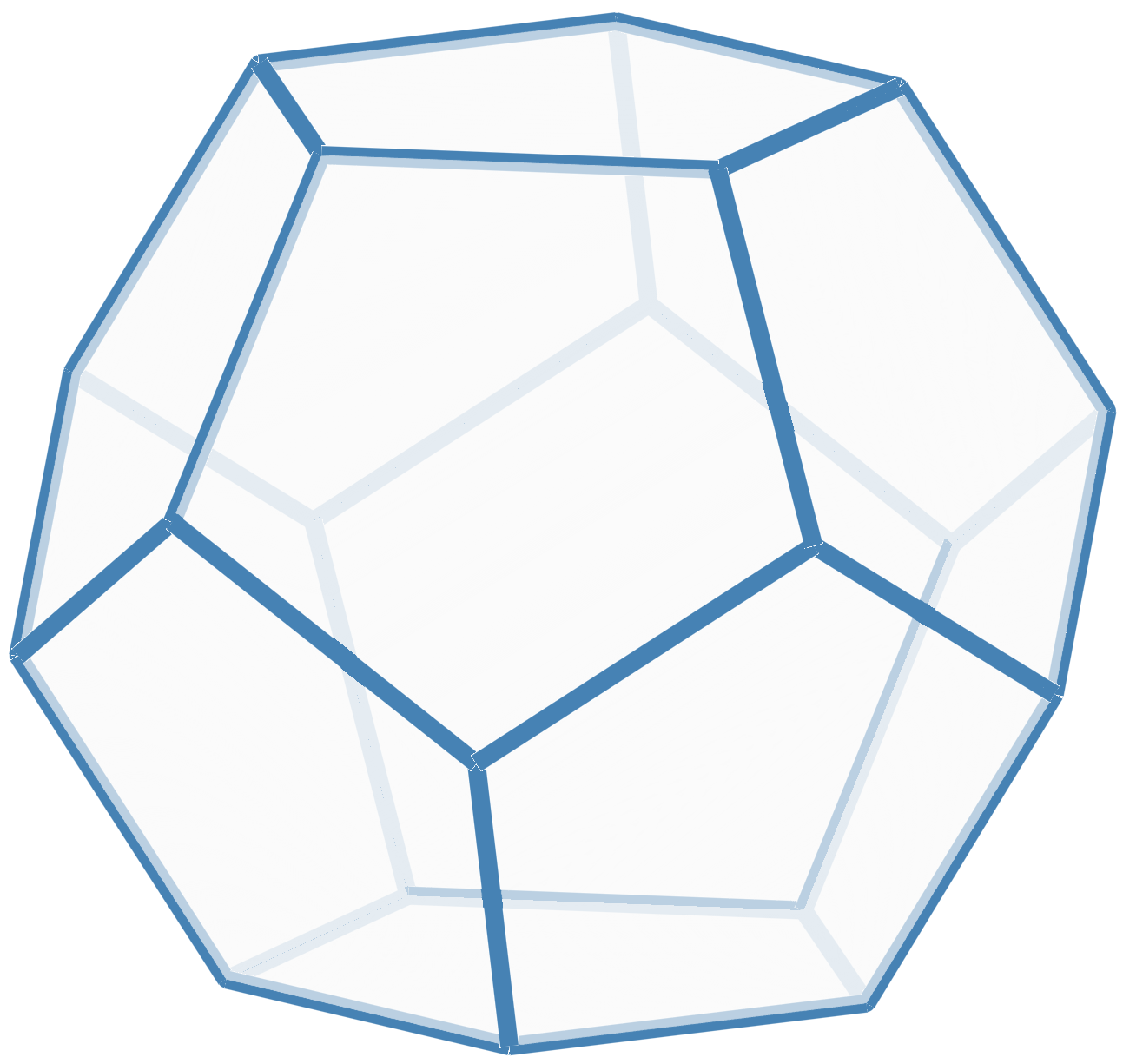}
\hspace{0.75em}
\includegraphics[width=0.235\linewidth]{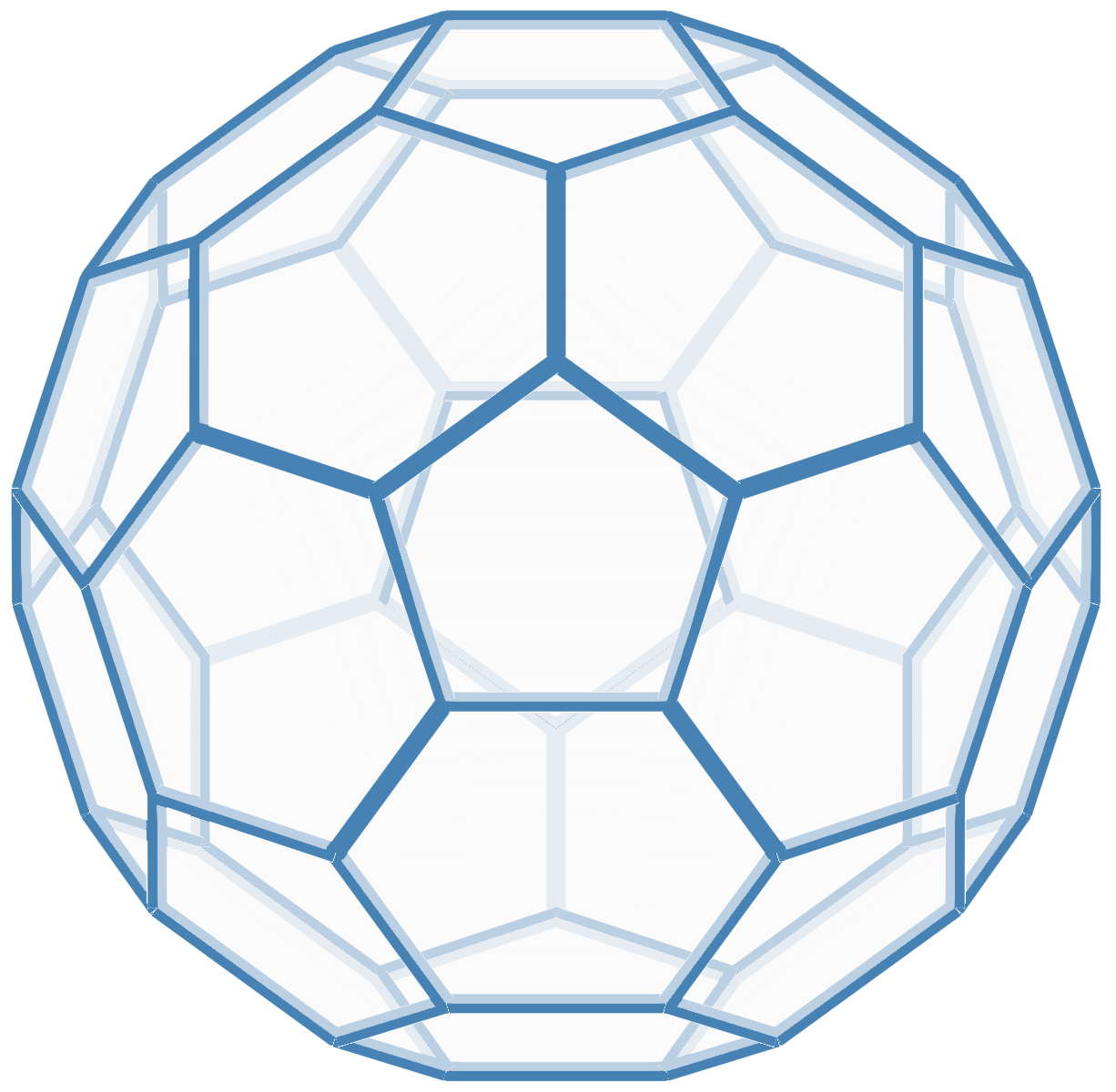}
\hspace{0.75em}
\includegraphics[width=0.2425\linewidth]{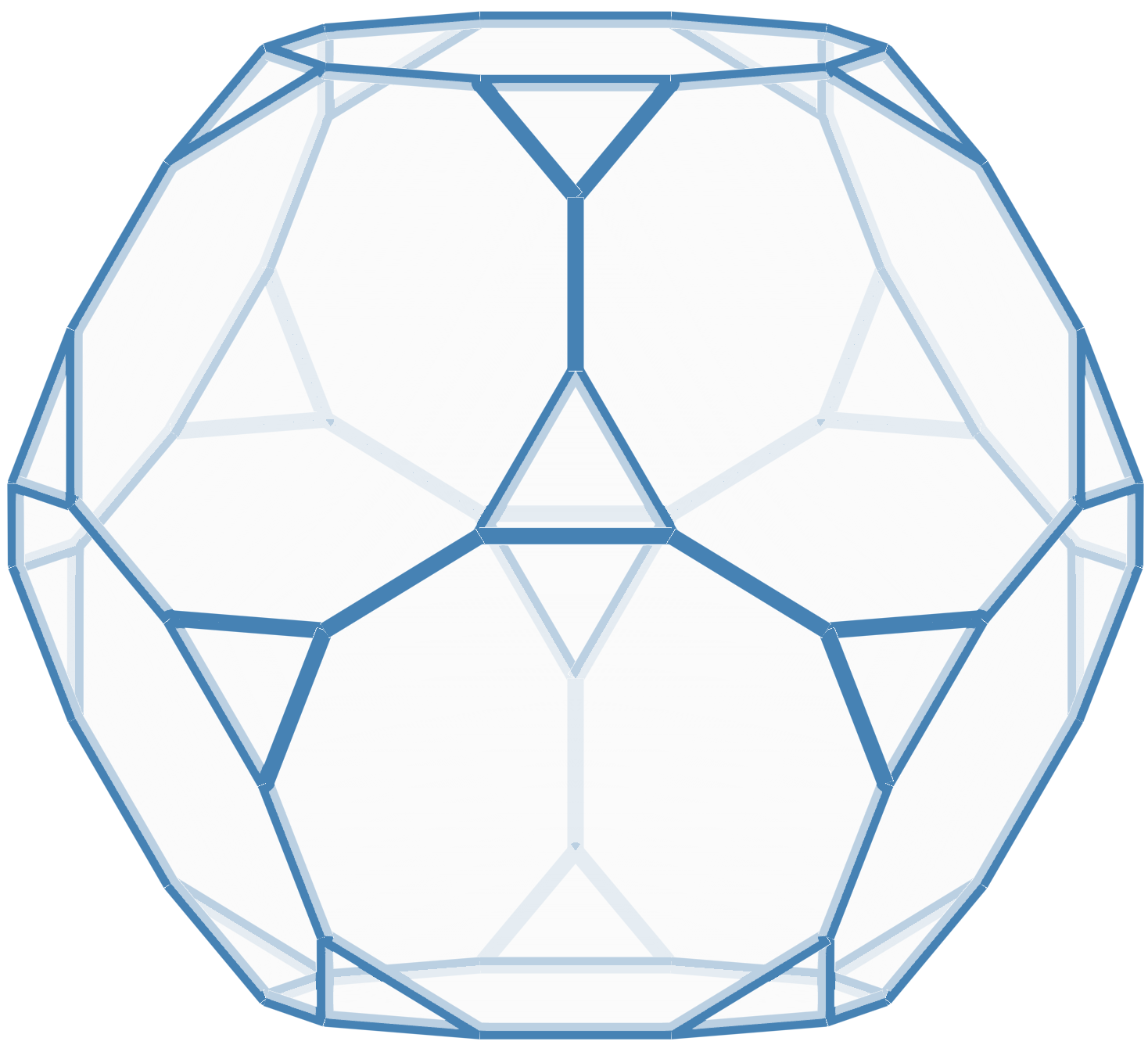}
    \caption{The regular dodecahedron (left), the truncated icosahedron (center) and the truncated dodecahedron (right) provide simple examples of polytopes with fascinating rigidity properties.
}
    \label{fig:Dod_TIco_TDod}
\end{figure}
As a consequence, the sentiment arose that this sort of flexibility is still rather exceptional, not only among combinatorial types of polytopes, but also among~realizations of a fixed type.
In \cite[Conjecture 1.1]{himmelmannschulzewinter2025rigiditypolytopesedgelength} the authors conjecture that~in~dimension at least three, a \textit{generic} convex polytope of a given combinatorial type~is rigid. This claim was shown to hold 
in dimension three \cite[Theorem 1.2]{himmelmannschulzewinter2025rigiditypolytopesedgelength}, but remains open for higher dimensions.

Analogous to the classical setting of bar-joint frameworks, deciding rigidity for a given polytope is computationally hard.
Once a polytope is not first-order rigid, the question of rigidity is often out of reach for direct techniques.
This already holds for polytopes with relatively few vertices and faces.
For example, in \cite[Section~6.2]{himmelmannschulzewinter2025rigiditypolytopesedgelength} the authors singled out the \textit{regular dodecahedron} (\Cref{fig:Dod_TIco_TDod}, left) as a guiding example of a relatively small polytope with unclear rigidity status.

In this article, we develop novel tools for the analysis of realization spaces and rigidity properties of polytopes. 
This includes a suitable second-order theory as~well as an effective algorithm for testing second-order rigidity of concrete examples.
For a demonstration, we analyze the rigidity properties of the Platonic, Archimedean and Catalan solids (see \cref{tab:comparison} and \cref{tab:catalan-comparison}). 
In particular, we decide the previously unknown rigidity properties of the \textit{regular dodecahedron} (\cref{fig:Dod_TIco_TDod}, left).
Solely the rigidity of the \emph{truncated dodecahedron} (\cref{fig:Dod_TIco_TDod}, right) cannot be determined by~our techniques, but seems to require techniques of even higher order. It will thus~serve as a simple test case for future theoretical and computational developments.
We find it remarkable that such varied rigidity behaviors can already be observed in classical examples.

This article is divided into theoretical, methodological, and experimental results,
of which we give an overview below (also refer to the \Cref{roadmap}).

\subsection{Theoretical results}
\label{sec:intro-theory}

In \cref{sec:combinatorics-of-polytopes} we recall the basics of polytope geometry, combinatorics, and rigidity from \cite{himmelmannschulzewinter2025rigiditypolytopesedgelength}.
In \cref{sec:first_order} we present the first-order theory. This includes definitions for the rigidity matrix, first-order motions and~flexes,
%
%
and equilibrium stresses.
We prove that first-order rigidity implies rigidity.
In \cref{sec:second-order-theory} we develop the second-order theory for polytope rigidity.
We provide analogues for the following notions and implications from framework rigidity:
$$\text{first-order rigid} \;\xRightarrow{\ref{res:prestress}}\; \text{prestress stable} \;\xRightarrow{\ref{res:prestress}}\; \text{second-order rigid} \;\xRightarrow{\ref{res:second_order}}\;
\text{rigid}.$$
We generalize the second-order stress test by Connelly and Whiteley \cite{connelly1996second} to~the~polytope setting (\cf\ \cref{res:sotest}).

\cref{section:edge-length-perturbations} conducts a local study of polyhedral realization spaces motivated by the following train of thought:
the classical Legendre-Steinitz theorem states that the realization space of a 3-dimensional polytope modulo isometries is homeomorphic to $\RR^{E}$, where $E$ is the number of edges \cite{rastanawi2021dimensions}.
This suggests that the shape of a 3-polytope can be controlled by one free and independent parameter per edge.\nls
If the polytope is flexible, however, these parameters cannot map to edge lengths directly, since there are different realizations with the same lengths. 
This suggests that the geometric freedoms somehow distribute between flexes and other deformations that do change edge length. The latter we call \emph{edge length perturbations}, and we explore their relations to the various notions of rigidity. 

\subsection{Methodological results}
\label{sec:intro-methods}
In \cref{section:second-order-rigidity-check}, we provide a concrete method based on real algebraic geometry for certifying second-order rigidity of polytopes. It relies on transforming the condition for second-order rigidity into a polynomial system of homogeneous equations, which can be symbolically solved using Gröbner bases. If this system has no non-trivial solutions, then the polytope is second-order rigid.

\subsection{Computational results}
\label{sec:intro-results}

Our newly developed first- and second-order rigidity certificates apply to any polytope.
For demonstration purposes, we benchmark them on the Platonic, Archimedean and Catalan solids for which combinatorial~data and exact vertex and normal coordinates are readily available.

In \cref{sec:so_rigidity_dod} we give a step-by-step account of how to apply our second-order~rigidity certificate to the regular dodecahedron. We thereby verify that it is indeed rigid (\cf\ \cref{dodecahedronrig}), answering a question posed in \cite[Section 6.2]{himmelmannschulzewinter2025rigiditypolytopesedgelength}.

Analogous computations are performed for the other test polytopes and are~documented in \cref{tab:comparison} and \cref{tab:catalan-comparison}.
We highlight two results: We show that, besides the dodecahedron, also the \textit{truncated icosahedron} (\cref{fig:Dod_TIco_TDod}, center) is not first- but second-order rigid. We also show that the \textit{truncated dodecahedron} (\cref{fig:Dod_TIco_TDod}, right) is \emph{not} second-order rigid, and thus its rigidity status remains open.
A detailed explanation of the tables and a discussion of the results follow in \cref{sec:experimental_results}.
Further theoretical background on the observed flexes and flex space dimensions is given in \cref{sec:dimension_estimates}.
All code and additional animations of the flexible polytopes are included in the Supplementary Material~\cite{zenodo_suppMaterial}.

Due to its previous status as the guiding example,
we dedicate \cref{sec:dodecahedron} to~another case study of the regular dodecahedron. 
We investigate the local behavior of its realization space with the goal of understanding the (potentially singular) nature of its very special configuration.
Using tools from Riemannian geometry and homotopy continuation,
we compute edge length perturbations and deformation paths (with details given in \cref{sec:approx_continuous_motions}).
We also conclude that the dodecahedron is not generically globally rigid.




{
\renewcommand{\thefigure}{}
\hypersetup{hidelinks}
\begin{roadmap}[h!]
    \centering
\begin{tikzpicture}[
    scale=0.725,
    transform shape,
    every node/.style={
        draw,
        rounded corners,
        minimum width=2.5cm,
        minimum height=0.9cm,
        align=center
    },
    >=stealth
]

\node[fill=white] (intro) at (21,1.65) {Intro};

\node[fill=white] (app) at (21,-1.65) {Appendix};

\node[fill=white] (c10) at (12,1.65) {\Cref{sec:dimension_estimates}};

\node[fill=white] (c3) at (7.5,1.65) {Chapter 3};

\node[fill=white] (c4) at (8.25,-0.825)  {Chapter 4};
\node[fill=white] (c5) at (12,-1.65) {Chapter 5};

\node[fill=white] (c6) at (12,1.65)  {Chapter 6};
\node[fill=white] (c8) at (16,-0.7) {Chapter 8};
\node[fill=white] (c7) at (16,0.825)  {Chapter 6};
\node[draw=none] (anchor1) at (16,1.65) {};
\node[draw=none] (anchor2) at (16,-1.65) {};

\node[fill=none] (c9) at (20.15,0) {\Cref{sec:so_rigidity_dod}};
\node[
    draw=NiceBlue!40,
    fill=NiceBlue!20,
    rounded corners,
    fit=(c3)(c4)(c5)(c6),
    inner sep=0.2cm,
    fill opacity=0.15,
    label=below:{Theory (\cf\ \Cref{sec:intro-theory})}
] {};
\node[
    draw=DarkRed!40,
    fill=DarkRed!20,
    rounded corners,
    fit=(anchor1)(anchor2),
    inner sep=0.2cm,
    fill opacity=0.15,
    label=below:{Methods (\cf\ \Cref{sec:intro-methods})}
] {};
\node[
    draw=DarkGreen!40,
    fill=DarkGreen!20,
    rounded corners,
    fit=(app)(intro)(c9),
    inner xsep=0.3cm,
    inner ysep=0.2cm,
    fill opacity=0.15,
    label=below:{Results (\cf\ \Cref{sec:intro-results})}
] {};

\node[fill=white] (tables) at (21,1.65) {\Cref{sec:archimedean}};

\node[fill=NiceBlueLight] (c10) at (12,1.65) {\Cref{sec:dimension_estimates}};

\node[fill=white] (app) at (21,-1.65) {\Cref{sec:dodecahedron}};

\node[fill=white] (c3) at (7.5,1.65) {\Cref{sec:notation}};

\node[fill=white] (c4) at (8.25,-0.825)  {\Cref{sec:first_order}};
\node[fill=white] (c5) at (12,-1.65) {\Cref{section:edge-length-perturbations}};

\node[fill=white] (c6) at (12,0)  {\Cref{sec:second_order}};
\node[fill=Lightbackgroundred] (c8) at (16,-0.7) {\Cref{sec:approx_continuous_motions}};
\node[fill=white] (c7) at (16,0.825)  {\Cref{section:second-order-rigidity-check}};

\node[fill=white] (c9) at (20.15,0) {\Cref{sec:so_rigidity_dod}};

\draw[myarrow] (c4) -- (c5);


\draw[myarrow] (c3) -- (c4);

\draw[myarrow] (c4) -- (c6);

\draw[myarrow] (c6) -- (c7);
\draw[myarrow] (c7) -- (tables);

\draw[myarrow] (c5) -- (app);

\draw[myarrow, dashed] (c7) -- (c9);
\draw[myarrow] (c8) -- (app);

\draw[myarrow, dashed] (c9) -- (tables);
\draw[myarrow, dashed] (c9) -- (app);
\draw[myarrow] (c10) -- (tables);

\end{tikzpicture}
\vspace*{-1em}
\caption{This article's overall structure. Necessary prerequisites and logical dependencies are indicated by arrows.}
\label{roadmap}
\vspace*{-2em}
\end{roadmap}
}

\tempnewpage

\section{Basic notions}
\label{sec:notation}
\label{sec:combinatorics-of-polytopes}

Throughout 
this article, let $P\subset\RR^d$ denote a $d$-dimensional \textit{convex}
polytope with non-empty interior and \Def{vertices} $p_1,...,p_n$, \ie\ $P$ is the~convex hull $\conv\{p_1,...,p_n\}$.
We often focus on polytopes in dimension three, which we call \Def{polyhedra}. 
Below~we recall the relevant facts about polytopes; for a general introduction we refer to \cite{ziegler2012lectures}.

\pagebreak
\subsection{Combinatorial types}

The combinatorics of a polytope can be described using its vertex-facet incidence structure: a \Def{combinatorial type}
$\mathcal P=(V,F,\sim)$ is~a triple consisting of an (abstract) \Def{vertex set} $V$, an (abstract) \Def{facet set} $F$ and~a~\Def{vertex-facet incidence relation} ${\sim}\subseteq V\times F$.
To each convex polytope $P\subset\RR^d$ we can assign such a combinatorial type in the obvious way.
Note that the edge graph is encoded within $\mathcal P$ too: we say that vertices $i,j\in V$ are \Def{adjacent} in $\mathcal P$, and denote this by $i\sim j$, if there are facets $\sigma_1,...,\sigma_r\in F$ so that $i$ and $j$ are the only two vertices incident to all of them.\nls
This defines a graph structure on $V$ which we call the \Def{edge graph} $G_{\mathcal P}=(V,E)$.
To emphasize the presence of this graph structure we shall denote a combinatorial type also as the quadruple $\mathcal P = (V,E,F,\sim)$.

\subsection{Polytopal realizations}

The polytopes that share a combinatorial type $\mathcal P$ form the \emph{polytopal realizations} of $\mathcal P$.
For computational convenience, a $d$-dimensional \Def{polytopal realization} of $\mathcal P$ will be given as a pair $(\bs p,\bs a)$ consisting of a \Def{vertex map} $\bs p\: V\to\RR^d$ and a \Def{facet normal map} $\bs a\: F\to\RR^d$, so that for each $\sigma\in F$ the points (or \Def{vertices}) $\{p_i\mid i\sim\sigma\}$ lie on a common affine hyperplane (the \Def{facet hyperplane}) with normal vector $a_\sigma$ (the \Def{facet normal}).
This can be expressed as
$$\<p_i,a_\sigma\>=1,\quad\text{whenever $i\sim \sigma$}.$$
We say that a realization is \Def{strictly convex} (or just \Def{convex} for short) if additionally
$$\<p_i,a_\sigma\><1,\quad\text{whenever $i\not\sim\sigma$}.$$
Each convex polytope $P$ with combinatorial type $\mathcal P$ and $0\in\Int(P)$ gives rise~to~a convex realization in this sense with the facet normals pointing outwards.
We~often identify a polytope with the corresponding realization and use $P$ and $(\bs p,\bs a)$~interchangeably.

All convex realizations of $\mathcal P$ taken together form its \Def{(convex) realization space}

\begin{align*}
\real(\mathcal P)
&:=\left\{\!\!
\begin{array}{l}
    \bs p\:V\to\RR^d
    \\
    \bs a\:F\to\RR^d
\end{array}
\Bigg\vert
    \begin{array}{rcl}
    \<p_i,a_\sigma\> &\!\!\!\!=\!\!\!\!\!& 1\text{ whenever $i\sim \sigma$}\,\\
    \<p_i,a_\sigma\> &\!\!\!\!<\!\!\!\!\!& 1\text{ whenever $i\not\sim\sigma$}\,
    \end{array}
\!\right\},
\end{align*}
With these definitions, $\real(\mathcal P)$ is a semi-algebraic set in $\RR^{dV}\oplus\RR^{dF}$\!. 
Note 
that~we use certain set variables, such as $V$, $E$ and $F$, to also stand in for their~cardinalities.

\begin{remark}\label{res:new_notion}
    The realization spaces defined in \cite[Section 2]{himmelmannschulzewinter2025rigiditypolytopesedgelength} are slightly different from the ones used here.
    Our constraint $\<p_i,a_\sigma\>=1$ for $i\sim \sigma$ prevents facet~hyperplanes from passing through the origin.
    This formulation is more convenient for our purpose.
    The corresponding constraint from \cite{himmelmannschulzewinter2025rigiditypolytopesedgelength} is $\<p_i-p_j,a_\sigma\> = 0$ for $i,j\sim\sigma$ and does not entail this restriction.
    Both definitions lead to spaces that are birationally equivalent in a neighborhood of each convex realization that has the origin in its interior.
    Since our analysis will be primarily local and since every affinely-spanning convex polytope can be translated so that it contains the origin in its interior, this distinction can be ignored.
\end{remark}

The \Def{Lie group of Euclidean isometries $\Iso$} 
(\ie\ rotations, translations and reflec\-tions) acts on $\real(\mathcal P)$.
Factoring it out yields the \Def{reduced~realization space}:
\begin{align*}
\realR(\mathcal P)
    :=\realSp(\mathcal{P})\,/\,\Iso . 
\end{align*}


\subsection{Polytope rigidity}
\label{sec:rigidity}

We examine continuous deformations of $P$ that preserve edge lengths, analogous to bar-joint framework rigidity. Unlike in the framework setting, these motions~must also preserve the polytope's structure, that is, facet planarities and the combinatorial type.
Below we make this precise.

    Two realizations $(\bs p,\bs a),(\tilbs p,\tilbs a)\in\real(\mathcal P)$ are 
\begin{align*}
    \text{\Def{equivalent} if } \|p_j-p_i\|&=\|\tilde p_j-\tilde p_i\| \text{ for each edge $ij\in E$},
    \\
    \text{\Def{congruent} if } \|p_j-p_i\|&=\|\tilde p_j-\tilde p_i\|\text{  for each pair $i,j\in V$}.
\end{align*}
A \Def{(continuous) motion} of $(\bs p,\bs a)$ is a continuous map $(\bs p^t,\bs a^t)\:[0,1]\to\real(\mathcal P)$~so that $(\bs p^0,\bs a^0)=(\bs p,\bs a)$ and $(\bs p^t,\bs a^t)$ is equivalent to $(\bs p,\bs a)$ for all $t\in[0,1]$.

We define the \Def{metric realization space}
$$\mreal(\mathcal P, P) := \big\{(\bs p, \bs a) \in \real(\mathcal P)\mid \text{$(\bs p,\bs a)$ is equivalent to $P$}\big\},$$
which contains all realizations of the polytope $\mathcal{P}$, whose edge lengths are prescribed according to the realization $P$. 
The equivalence of edge lengths can be expressed as $\smash{\|p_i-p_j\|^2=\|\tilde p_i-\tilde p_j\|^2}$, that is, by a system of quadratic polynomial equations. Therefore, $\mreal(\mathcal{P},P)$ also is a semi-algebraic set in $\mathbb{R}^{dV}\oplus \mathbb{R}^{dF}$. 
A motion can~also be understood as a continuous curve $\bs p^t\:[0,1]\to\mreal(\mathcal P,P)$.

A motion is \Def{trivial} if $(\bs p^t,\bs a^t)$ is congruent to $(\bs p,\bs a)$ for all $t\in[0,1]$.
If non-trivial, we call it a \Def{flex} of $P$.
If a polytope has a flex, we call it \Def{flexible}, otherwise \Def{rigid}.
As before, the Lie group of Euclidean isometries $\Iso$ acts on $\mreal(\mathcal P, P)$ and can be factored out to obtain the \Def{reduced metric realization space}
$$\mreal^*(\mathcal P, P) = \mreal(\mathcal P,P)/\Iso.$$
A flex can be understood as a non-constant curve $\bs p^t:[0,1]\to\mreal^*(\mathcal P,P)$.
For constructions of flexible polytopes, see \cref{sec:flexible_polytopes} or \cite[Section 4]{himmelmannschulzewinter2025rigiditypolytopesedgelength}.

\begin{remark}
Evidently, these definitions apply verbatim to the more general class of point-hyperplane frameworks \cite{EJNSTW}.
Consequently, much of what we say~here applies more generally.
Caution is necessary when we assume certain sub-configurations to be affinely spanning, which is natural in the polytope setting, but does not apply to general point-hyperplane frameworks.    
\end{remark}

\tempnewpage 

\section{First-order theory of polytope rigidity}
\label{sec:first_order}

The first-order theory of a geometric constraint system studies the deformations that preserve the constraints up to first order. 
It is a linear theory and hence~provides heuristics and criteria that can be efficiently checked in practice.

Given a polytope $P=(\bs p,\bs a)\subset\RR^d$, its \Def{rigidity matrix} $\mathcal R(P)=\mathcal R(\bs p,\bs a)$ has the following structure:

\begin{figure}[h!]
    \centering \includegraphics[width=0.97\textwidth]{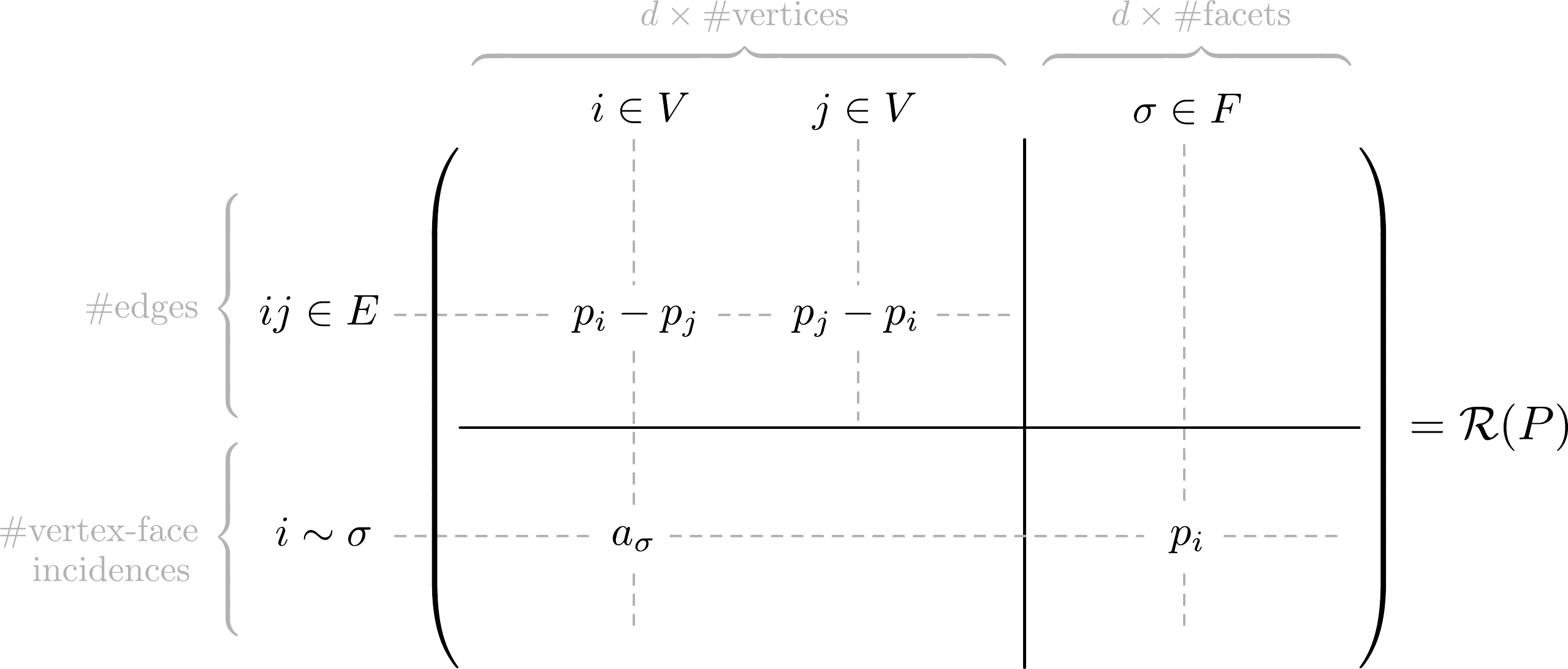}
\end{figure}

\noindent
All displayed entries are to be interpreted as $1\times d$ sub-matrices. 
The dimensions~of $\mathcal R(P)$ are therefore $(E+\VF)\times(dV+dF)$, where \Def{$\VF$} denotes the set of incident vertex-facet pairs. The rigidity matrix can be interpreted as (a scaled~version~of)~the Jacobian corresponding to the polynomial system of equations that define the metric realization space $\mreal(\mathcal{P},P)$.

The first-order theory of rigidity is essentially the study of the linear algebra~pro\-perties of $\mathcal R(P)$.
All first-order notions can be derived from the rigidity~matrix.

\subsection{First-order rigidity}
\label{sec:first-order-theory}

The elements of the kernel of $\mathcal R(P)$ are called \Def{first-order motions} of $P$. We denote them as pairs $\smash{\dot P=(\bsdot p,\bsdot a)}$ consisting of maps $\bsdot p\:V\to\RR^d$ and $\bsdot a\:F\to\RR^d$ that satisfy the following equations:
\begin{align}
    \<p_i-p_j,\,\dot p_i-\dot p_j\> &= 0 ,\quad\text{whenever $ij\in E$},
    \label{eq:flex_at_vertex}
    \\
    \<p_i,\dot a_\sigma\> + \<\dot p_i, a_\sigma\> &= 0 ,\quad\text{whenever $i\sim\sigma$}.
    \label{eq:flex_at_vertex_face}
\end{align}
Similar to the framework setting, the interpretation is that $\bsdot p$ and $\bsdot a$ are infinitesimal changes in the values of $\bs p$ and $\bs a$. In particular, \eqref{eq:flex_at_vertex} and \eqref{eq:flex_at_vertex_face} can be obtained~by differentiating the equations for edge length and coplanarity constraints.
Note~also that $\bsdot a$ is already determined by $\bsdot p$ via \eqref{eq:flex_at_vertex_face} (for details see the proof of \cref{res:trivial_dim}).

First-order motions are called \Def{trivial} if they satisfy
\begin{align}
    \<p_i-p_j,\,\dot p_i-\dot p_j\> &= 0 ,\quad\text{for any $i,j\in V$},
    \label{eq:trivial_motion_vertex}
\end{align}
Separate constraints on $\bsdot a$ are not necessary since they are already~determined via \eqref{eq:flex_at_vertex_face}.
Non-trivial first-order motions are called \Def{first-order flexes}. If a \mbox{first-order flex} exists, the polytope is called \Def{first-order flexible}, otherwise \Def{first-order rigid}.

The trivial first-order motions of $P$ form a linear subspace $\mathcal T_P\subseteq\ker \mathcal R(P)$.\nls
The first-order flexes do not form a linear space, though for matters of dimension counting we can consider the quotient $\ker \mathcal R(P)/\mathcal T_P$. We shall call it the \Def{first-order flex space} of $P$.
This space is trivial if and only if $P$ is first-order rigid.
The dimensions of the first-order flex space for the Platonic, Archimedean and Catalan solids are included in \cref{tab:comparison} and \cref{tab:catalan-comparison}.

\begin{lemma}
    \label{res:trivial_dim}
    $\dim\mathcal T_P=\binom{d+1}{2}$.
    In particular, $\corank \mathcal{R}(P)\geq \binom{d+1}{2}$.
\end{lemma}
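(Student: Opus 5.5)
We show that $\mathcal T_P$ is isomorphic to the space of infinitesimal isometries of $\RR^d$, i.e.\ the maps $x\mapsto Sx+b$ with $S\in\RR^{d\times d}$ skew-symmetric and $b\in\RR^d$. This space has dimension $\binom{d}{2}+d=\binom{d+1}{2}$. The corank bound then follows at once, since $\mathcal T_P\subseteq\ker\mathcal R(P)$.

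\emph{Step 1: $\bsdot a$ is determined by $\bsdot p$.} Fix a facet $\sigma$. Equation \eqref{eq:flex_at_vertex_face} prescribes the values $\<p_i,\dot a_\sigma\>=-\<\dot p_i,a_\sigma\>$ for all $i\sim\sigma$. The points $\{p_i\mid i\sim\sigma\}$ affinely span the facet hyperplane $\{x\mid\<x,a_\sigma\>=1\}$, which does not contain the origin. Hence they linearly span $\RR^d$, and $\dot a_\sigma$ is uniquely determined. Consequently the projection $(\bsdot p,\bsdot a)\mapsto\bsdot p$ is injective on $\ker\mathcal R(P)$, and in particular on $\mathcal T_P$.

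\emph{Step 2: the $\bsdot p$-parts of trivial motions are exactly the infinitesimal isometries.} First take $\dot p_i=Sp_i+b$ with $S$ skew. Then $\<p_i-p_j,S(p_i-p_j)\>=0$, so \eqref{eq:trivial_motion_vertex} holds. For the facet part, set
\[
\dot a_\sigma:=Sa_\sigma+(\<b,a_\sigma\>)\,a_\sigma\text{-type correction}.
\]
Concretely, we differentiate the action of the curve $x\mapsto e^{tS}x+tb$ on the equations $\<p_i,a_\sigma\>=1$, which yields $\dot a_\sigma=Sa_\sigma-\<b,a_\sigma\>a_\sigma$. One checks directly that
\[
\<p_i,Sa_\sigma\>+\<Sp_i,a_\sigma\>=0
\quad\text{and}\quad
-\<b,a_\sigma\>\<p_i,a_\sigma\>+\<b,a_\sigma\>=0
\]
whenever $\<p_i,a_\sigma\>=1$. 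So $(\bsdot p,\bsdot a)\in\mathcal T_P$.

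Conversely, suppose \eqref{eq:trivial_motion_vertex} holds for all pairs. Since $P$ is full-dimensional, the vertices affinely span $\RR^d$. This gives the classical step: a map on an affinely spanning point set that preserves all squared distances to first order is the restriction of an infinitesimal isometry. To see this, fix vertex $p_1$ and an affine basis $p_1,p_{k_1},\dots,p_{k_d}$, and translate so that $\dot p_1=0$. Polarization of $\<p_i-p_j,\dot p_i-\dot p_j\>=0$ gives $\<u,\dot v\>+\<v,\dot u\>=0$ for differences $u,v$ of vertices from $p_1$. We define the linear map $S$ on the basis vectors $p_{k_l}-p_1$ by $S(p_{k_l}-p_1):=\dot p_{k_l}$; this identity shows $S$ is skew. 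For any other vertex $p_i$, the vector $w:=\dot p_i-S(p_i-p_1)$ satisfies $\<u,w\>=0$ for all basis differences $u$, and hence $w=0$. Adding back the translation gives $\dot p_i=Sp_i+b$ for suitable $b$.

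\emph{Step 3: injectivity of the parametrization.} The map $(S,b)\mapsto(\dot p_i)_i$ is injective, because $Sp_i+b=0$ on an affinely spanning set forces $S=0$ and $b=0$. Combined with Step 1, this gives $\dim\mathcal T_P=\binom{d+1}{2}$.

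The main obstacle is the converse direction in Step 2, where we must extend the skew map consistently from the basis to all vertices. The polarization argument above handles this. The remaining steps are routine linear algebra.
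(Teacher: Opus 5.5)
Your proof is correct, and its skeleton matches the paper's. Both identify the $\bsdot p$-parts of trivial motions with restrictions of infinitesimal isometries $x\mapsto Sx+b$, and then show that each of these extends uniquely to a facet part $\bsdot a$. The differences lie in how the two sub-steps are carried out.

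For the bar-joint part, the paper simply cites Asimow--Roth. You prove it by polarization on an affine basis, so your argument is self-contained.

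For the extension, the paper chooses $d$ affinely independent vertices of each facet and takes $\dot a_\sigma$ to be the unique solution of the resulting square system. It then verifies the remaining incidences by writing $p_i$ as an affine combination of the chosen vertices. This uses only that $\bsdot p$ depends affinely on $\bs p$, not that $S$ is skew. You instead give the closed form $\dot a_\sigma=Sa_\sigma-\<b,a_\sigma\>a_\sigma$, the derivative of the normal of the moving hyperplane under $x\mapsto e^{tS}x+tb$. This turns verification into a one-line check and shows geometrically what a trivial motion does to the normals, at the cost of using skewness.

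Your Step~1 also makes explicit why $\bsdot a$ is determined by $\bsdot p$: the facet vertices span $\RR^d$ linearly because the facet hyperplane misses the origin. The paper needs this for $\mathcal T_P\simeq\mathcal T_P'$ but compresses it into the phrase ``full-rank linear system''.

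One cosmetic fix: delete the placeholder line defining $\dot a_\sigma$ as an ``$Sa_\sigma+(\<b,a_\sigma\>)a_\sigma$-type correction''. It has the wrong sign and is superseded by the correct formula immediately afterwards.
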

\begin{proof}
    The equations 
    \eqref{eq:trivial_motion_vertex} are precisely the constraints of trivial first-order motions for bar-joint frameworks.
    In this setting, trivial first-order motions have the form $\bsdot p = S \bs p + t$ for a \mbox{skew-sym}\-metric matrix $S\in\RR^{d\x d}$ and $t\in\RR^d$, forming a vector space $\mathcal T_P'$ of dimension $\binom{d+1}{2}$ \cite{Asimow1978TheRO}.
    Note that this assumes that $\bs p$ is affinely spanning, which follows from our assumption that $P$ has nonempty interior.
    
    It remains to verify that each $\bsdot p\in\mathcal T_P'$ extends to a polytopal first-order motion of $P$.
    For this, fix $\sigma\in F$ and choose a basis $p_{i_1},...,p_{i_k}$ among the $p_i,i\sim\sigma$.\nls
    This~is possible because the face spans a $(d-1)$-dimensional affine subspace.
    Let~$\dot a_\sigma$~be~the unique solution of the full-rank linear system $\<\dot a_\sigma, p_{i_k}\> = -\<a_\sigma,\dot p_{i_k}\>$~where $k$ ranges over $\{1,...,d\}$. This ensures \eqref{eq:flex_at_vertex_face} for the $p_{i_k}$, and it remains to verify \eqref{eq:flex_at_vertex_face} for a general $p_i,i\sim\sigma$.
    Since the $p_i,i\sim\sigma$ are coplanar, we can write $p_i = \sum_{k}\alpha_k p_{i_k}$ for coefficients $\alpha_k$ that satisfy $\sum_k \alpha_k=1$. Using $\dot p_i = S p_i + t$, we obtain
    \begin{align*}
        \<\dot a_\sigma, p_i\> 
            &= \Big\< \dot a_\sigma, \sum_k \alpha_k p_{i_k}\Big\>
            = \sum_k \alpha_k \< \dot a_\sigma, p_{i_k}\>
            = -\sum_k \alpha_k \< a_\sigma, \dot p_{i_k}\>
            \\&= -\sum_k \alpha_k \< a_\sigma, Sp_{i_k}+t\>
            = - \Big\< a_\sigma, S\sum_k \alpha_k p_{i_k} + t\sum_k \alpha_k\Big\>
            \\&= - \< a_\sigma, Sp_i + t\>
            = - \< a_\sigma, \dot p_i\>.
    \end{align*}
    We conclude $\mathcal T_P\simeq\mathcal T_P'$, and the claim follows.
\end{proof}

\begin{corollary}
    \label{res:corank_rigid}
    A polytope is first-order
    rigid if and only if $\corank \mathcal R(P) = \binom{d+1}{2}$
\end{corollary}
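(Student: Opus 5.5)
The plan is to reduce the corollary to \cref{res:trivial_dim} by linear algebra. By definition, $P$ is first-order rigid exactly when every first-order motion is trivial, that is, when $\ker\mathcal R(P)=\mathcal T_P$. Equivalently, the first-order flex space $\ker\mathcal R(P)/\mathcal T_P$ is the zero space. So the whole argument is a dimension count on the inclusion $\mathcal T_P\subseteq\ker\mathcal R(P)$.

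First, confirm that $\mathcal T_P$ really is a linear subspace of $\ker\mathcal R(P)$, as the text asserts. A trivial first-order motion satisfies \eqref{eq:trivial_motion_vertex} for all pairs $i,j\in V$, hence in particular for all edges $ij\in E$, which gives \eqref{eq:flex_at_vertex}. By the proof of \cref{res:trivial_dim}, its facet part $\bsdot a$ is the one determined through \eqref{eq:flex_at_vertex_face}, so it lies in the kernel.

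Second, by definition $\corank\mathcal R(P)=\dim\ker\mathcal R(P)$, the nullity of $\mathcal R(P)$ viewed as a map on $\RR^{dV}\oplus\RR^{dF}$. For a subspace inclusion of finite-dimensional spaces, equality of the spaces holds if and only if their dimensions agree. Hence $\ker\mathcal R(P)=\mathcal T_P$ if and only if $\dim\ker\mathcal R(P)=\dim\mathcal T_P$. \Cref{res:trivial_dim} gives $\dim\mathcal T_P=\binom{d+1}{2}$, and both directions of the corollary follow at once.

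There is no real obstacle here; the only care needed is the convention for ``corank''. It must mean the kernel dimension, i.e.\ the number of columns $dV+dF$ minus the rank, and not the rank deficiency relative to the number of rows. This is the same convention under which the inequality $\corank\mathcal R(P)\geq\binom{d+1}{2}$ in \cref{res:trivial_dim} holds, so I would simply state it explicitly.
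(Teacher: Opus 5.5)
Your proposal is correct and follows the paper's proof, which is exactly the chain $P$ first-order rigid $\Leftrightarrow \ker\mathcal R(P)=\mathcal T_P \Leftrightarrow \corank\mathcal R(P)=\binom{d+1}{2}$, using \cref{res:trivial_dim} and the inclusion $\mathcal T_P\subseteq\ker\mathcal R(P)$. Your extra remarks are harmless clarifications: the inclusion actually holds by definition, since trivial motions are defined as first-order motions, and you state the nullity convention for corank explicitly.
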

\begin{proof}
    Let $\mathcal T_P\subseteq\ker\mathcal R(P)$ be the space of trivial first-order motions of $P$.
    Then
    \[ \text{$P$ is first-order rigid}\; \overset{\text{def}}\Longleftrightarrow\; \ker\mathcal R(P)=\mathcal T_P \;\overset{\ref{res:trivial_dim}}\Longleftrightarrow\; \corank \mathcal R(P)=\textstyle\binom{d+1}{2}.     \qedhere
\]
\end{proof}

First-order rigidity provides an efficiently computable certificate for rigidity:

\begin{theorem}
    \label{res:first_order}
    If a polytope is first-order rigid, then it is rigid.
\end{theorem}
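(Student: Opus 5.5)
We will follow the classical Asimow–Roth argument for bar-joint frameworks. The key tools are the implicit function theorem applied to the polynomial map defining $\mreal(\mathcal P,P)$ and the orbit of $P$ under $\Iso$.

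First, define the smooth (polynomial) map
\[
f\colon \RR^{dV}\oplus\RR^{dF}\to\RR^{E}\oplus\RR^{\VF},\quad (\bs p,\bs a)\mapsto \big((\|p_i-p_j\|^2)_{ij\in E},\,(\<p_i,a_\sigma\>)_{i\sim\sigma}\big).
\]
Its Jacobian at $P$ is, up to scaling rows by $2$, the rigidity matrix $\mathcal R(P)$. The set $\mreal(\mathcal P,P)$ coincides, in a small neighbourhood $U$ of $P$, with the fibre $f^{-1}(f(P))\cap U$. This holds because the strict inequalities $\<p_i,a_\sigma\><1$ for $i\not\sim\sigma$ are open conditions and remain valid near $P$.

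Second, exhibit a manifold of known dimension inside this fibre. Consider the orbit map $\Iso\to\RR^{dV}\oplus\RR^{dF}$, $g\mapsto g\cdot P$. Here $g$ acts on vertices directly and on normals by the induced rule: for $g(x)=Ox+t$, the normal $a_\sigma$ becomes $Oa_\sigma/(1+\<Oa_\sigma,t\>)$, so that $\<p_i,a_\sigma\>=1$ is preserved. This rule is defined for $g$ near the identity, which is all we need locally. The orbit lies in the fibre, and its tangent space at $P$ is exactly $\mathcal T_P$, the extension of $S\bs p+t$ constructed in the proof of \cref{res:trivial_dim}. Since the action of the connected component is locally free on affinely spanning configurations, a neighbourhood of $P$ in the orbit is a smooth submanifold of dimension $\binom{d+1}{2}$.

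Third, use first-order rigidity. By \cref{res:corank_rigid}, $\ker\mathcal R(P)=\mathcal T_P$ has dimension $\binom{d+1}{2}$. By lower semicontinuity of corank, the rank of $\mathcal R$ is at least $\rank\mathcal R(P)$ in a neighbourhood. Here lies the main obstacle: we cannot simply apply the constant rank theorem, since the rank could a priori jump up nearby. However, at points of the fibre near $P$, the orbit argument again gives corank at least $\binom{d+1}{2}$. So along the fibre the rank is locally constant and equal to $\rank\mathcal R(P)$.

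To handle this cleanly, I would choose $r=\rank\mathcal R(P)$ rows of $\mathcal R(P)$ that are linearly independent and let $f_r$ be the corresponding subsystem of $f$. By the implicit function theorem, $f_r^{-1}(f_r(P))$ is near $P$ a smooth manifold of dimension $dV+dF-r=\binom{d+1}{2}$. The fibre of $f$ near $P$ is contained in it, and in turn contains the orbit neighbourhood, which has the same dimension. Hence all three coincide near $P$.

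Finally, let $(\bs p^t,\bs a^t)$ be any motion of $P$. By continuity it stays in $U$ for small $t$, hence in the orbit, hence it is congruent to $P$ for small $t$. To conclude non-existence of a flex, note that if the motion were non-trivial, there would be some first time $t_0$ after which non-congruent realizations occur arbitrarily close. At $t_0$ the realization is congruent to $P$, say $g\cdot P$, and is itself first-order rigid, because rigidity matrices of congruent realizations are related by an isomorphism. Applying the local argument at $g\cdot P$ gives a contradiction. Alternatively, one can use the convention that a single nontrivial early-time deformation suffices. Either way $P$ is rigid.
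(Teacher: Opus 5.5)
Your proof is correct, but it takes a different route from the paper's.

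The paper argues by contraposition. It pins vertices to kill the isometries and uses the curve selection lemma to obtain an analytic flex $P^t$. It then cancels as many low-order Taylor terms as possible by adding trivial motions, and shows that the first surviving term $(\frac{\mathrm d}{\mathrm dt})^k_{t=0}P$ is a non-trivial first-order motion.

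You instead give the Asimow--Roth argument. Selecting $r=\rank\mathcal R(P)$ independent rows yields a submersion $f_r$ whose fibre near $P$ is a manifold of dimension $\binom{d+1}{2}$. This fibre contains the fibre of $f$, which in turn contains a neighbourhood of $P$ in the $\Iso$-orbit: a manifold of the same dimension with tangent space $\mathcal T_P$. So the fibre of $f$ near $P$ is exactly the orbit. Your first-time (open--closed) argument then extends this to all of $[0,1]$. It uses two facts: congruence to $P$ is a closed condition in $t$, and a realization congruent to $P$ has the form $g\cdot P$, because the normals are determined by the vertices.

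What your route buys: it needs only the implicit and inverse function theorems and no semi-algebraic tools. It avoids the somewhat delicate cancellation-and-reparametrization step. It also proves something stronger: $P$ is isolated modulo isometries in $\mreal(\mathcal P,P)$, not merely free of continuous flexes.

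What the paper's route buys: it transfers essentially verbatim to \cref{res:second_order}. There $\ker\mathcal R(P)$ is strictly larger than $\mathcal T_P$, so the fibre of $f_r$ has dimension exceeding that of the orbit and your squeeze argument is unavailable.

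Two minor points. It is rank that is lower semicontinuous; corank is upper semicontinuous. That paragraph is superseded by your row-selection argument anyway. The closing remark about an ``alternative convention'' is unnecessary.
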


This is a consequence of the second-order results \cref{res:second_order} and \cref{res:prestress}.
Below we give an independent proof to highlight the similarities between the first- and second-order theory (in particular, compare to the proof of \cref{res:second_order}).

\begin{proof}[Proof of \cref{res:first_order}]
    We prove the contrapositive: we assume that $P$ is flexible, and show that it is first-order flexible.
    For this, consider the algebraic subset $M\subseteq \mreal(\mathcal P,P)$ where we restrict sufficiently many vertices of $P$ to subspaces so as to precisely cancel the trivial motions. 
    If $P$ is flexible, then $P$ is not an isolated point of $M$.
    Hence, using the \emph{curve selection lemma} (see \eg\ \cite[Lemma 18.3]{algebraicapproximationcurves}) we obtain a non-constant analytic path $P^t$ in $M$ that ends at $P$.
    In other words, $P^t$ is a flex of $P$ that has a Taylor series expansion that converges to $P^t$.

    By adding trivial motions to $P^t$ we can cancel some of the lower order terms~of the series expansion.
    Since $P^t$ is non-trivial and analytic, there is a largest~integer $k$ so that we can simultaneously cancel all orders $1,...,k-1$.

    We now consider the $k$-th derivatives of the first-order motion constraints. We write $(\frac{\mathrm d}{\mathrm dt})^m_{t=0}$ for the operator that takes the $m$-th derivative and evaluates at $t=0$.
    For the edge length constraint \eqref{eq:flex_at_vertex} at $ij\in E$ we obtain
    \begin{align*}
        0 
        = 
        \Big(\frac{\mathrm d}{\mathrm dt}\Big)^{\!k}_{\!t=0}\, \|p_i-p_j\|^2 &= 
        \sum_{\ell=0}^k\binom{k}{\ell} \Big\<\Bigl(\frac{\mathrm d}{\mathrm dt}\Bigr)^{\!\ell}_{\!t=0}(p_i-p_j), \Bigl(\frac{\mathrm d}{\mathrm dt}\Bigr)^{\!k-\ell}_{\!t=0}(p_i-p_j)\Big\>
        \\&\overset{\mathclap{(*)}}=
        2\Big\<p_i-p_j, \Bigl(\frac{\mathrm d}{\mathrm dt}\Bigr)^{\!k}_{\!t=0}(p_i-p_j)\Big\>,
    \end{align*}
    where in $(*)$ we use that most terms are zero by the choice of $k$.
    Analogously, for the vertex-facet incidence constraint \eqref{eq:flex_at_vertex_face} at $i\sim \sigma$ we obtain
    \begin{align*}
        0 
        = 
        \Big(\frac{\mathrm d}{\mathrm dt}\Big)^{\!k}_{\!t=0} \<p_i,a_\sigma\>
        &= 
        \sum_{\ell=0}^k\binom{k}{\ell} \Big\< \Bigl(\frac{\mathrm d}{\mathrm dt}\Bigr)^{\!\ell}_{\!t=0}\, p_i, \Bigl(\frac{\mathrm d}{\mathrm dt}\Bigr)^{\!k-\ell}_{\!t=0} a_\sigma\Big\>
        \\&=
        \Big\<p_i, \Bigl(\frac{\mathrm d}{\mathrm dt}\Bigr)^{\!k}_{\!t=0}a_\sigma\Big\>+\Big\<\Bigl(\frac{\mathrm d}{\mathrm dt}\Bigr)^{\!k}_{\!t=0}\,p_i, a_\sigma\Big\>.
    \end{align*}
    Observe that these two identities precisely express that $\dot P := (\tfrac{\mathrm d}{\mathrm dt})^k_{t=0} P$ satisfies the equations of a first-order motion.
    It remains to show that $\smash{\dot P}$ is non-trivial. 
    In fact, if $\dot P$ were trivial, then $\dot P= \tfrac{\mathrm d}{\mathrm dt} Q$ for some trivial motion $Q^t$.
    One can check that then the reparameterization $\hat Q^t:= Q^{t^k/k!}$ is still trivial and cancels the $k$-th order term of $P^t$, in contradiction to the choice of $k$.
\end{proof}

\begin{remark}
    \label{res:first_order_no_reverse}
    The converse of \cref{res:first_order} does not hold in general.
    For example, the regular dodecahedron is first-order flexible (it has a 5-dimensional space of~first-order flexes), but is second-order rigid, and hence rigid (\cf\ \cref{tab:comparison}).    
\end{remark}

\subsection{Equilibrium stresses}

The elements of the cokernel of the  rigidity matrix $\mathcal R(P)$ (\ie\ the kernel of $\mathcal R(P)\T$) are called the \Def{equilibrium stresses} (or just \Def{stresses}) of a system.
Stresses form the second, and equally important, first-order notion of rigidity theory.
We denote a stress by a pair $(\bs\omega,\bs\eta)$ consisting of maps $\bs\omega\:E\to\RR$ and $\bs\eta\:\VF\to\RR$. 
They satisfy the following two \Def{stress equilibrium conditions}:
\begin{align}
    \sum_{\mathclap{j:ij\in E}} \omega_{ij}(p_i-p_j) + \sum_{\mathclap{\sigma:i\sim\sigma}} \eta_{i\sigma} a_
    \sigma&= 0,\quad\text{for all $i\in V$}, 
    \label{eq:stress_at_vertex}
    \\
    \sum_{\mathclap{i:i\sim\sigma}} \eta_{i\sigma} p_i &= 0,\quad\text{for all $\sigma\in F$}.
    \label{eq:stress_at_face}
\end{align}
We write $\bs\xi=({\bs \omega}, {\bs \eta})$ for sake of brevity and call it a \Def{polytope stress}.

\begin{figure}[h!]
    \centering
    \includegraphics[width=0.7\linewidth]{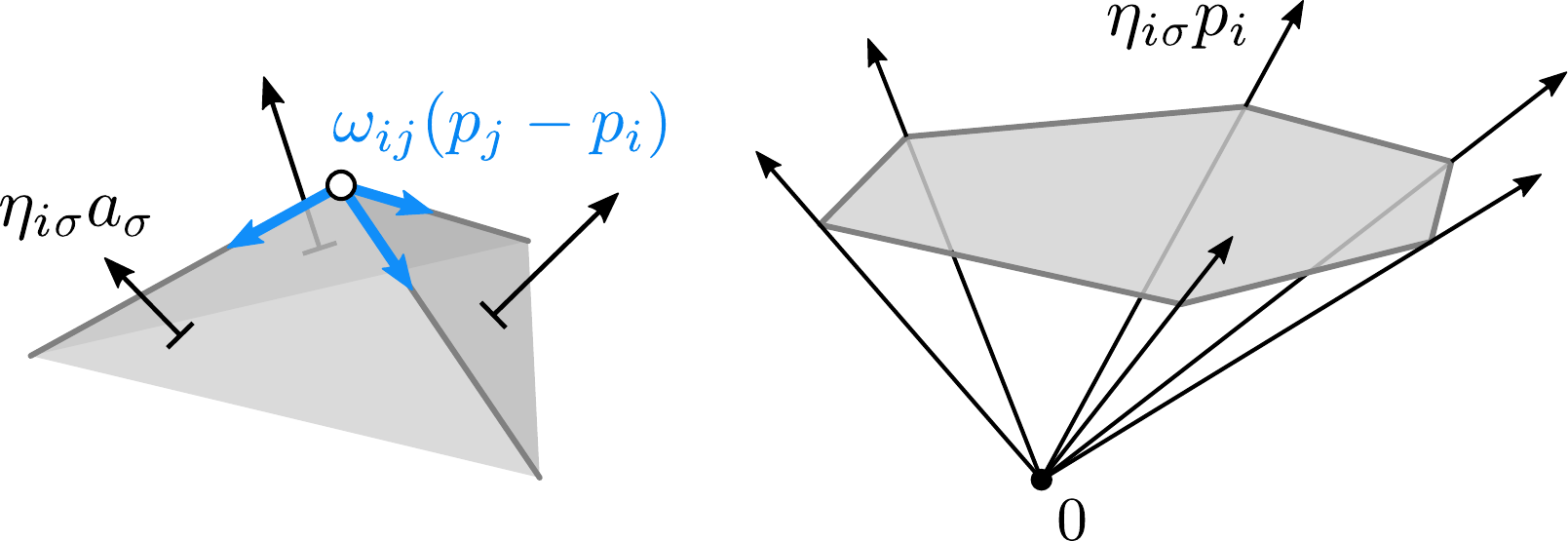}
    \caption{Visualization of a polytope stress $(\bs\omega,\bs\eta)$.}
    \label{fig:stress_on_face}
\end{figure}

\begin{remark}
\label{remark:stresses}
    Stresses in frameworks can be given a physical interpretation: each edge of a framework is thought of as a Hookean spring with a spring constant given
    by the stress $\omega_{ij}$ at $ij\in E$. The spring constant determines the magnitude of the force with which the edge pushes or pulls on its end vertices along the direction of the edge. Whether it pulls or pushes is determined by the sign of $\omega_{ij}$. The stress equilibrium condition (\ie\ \eqref{eq:stress_at_vertex} with second term removed)
    states that the forces that act on a vertex $i\in V$ cancel each other out.

    The spring constant interpretation persists for the $\bs\omega$-part of the polytope stress $(\bs\omega,\bs\eta)$, but now they do not need to cancel out at the vertices on their own.
    Instead, they need to add up to be equal and opposite to the second term -- the $\bs\eta$-part of the stress.
    This is complicated by the fact that $\bs\eta$ contributes to two equilibrium conditions, \eqref{eq:stress_at_vertex} and \eqref{eq:stress_at_face}, for which we try to provide interpretations below.
     
    First, the right term in \eqref{eq:stress_at_vertex} can be thought of as forces pushing a vertex along the normals of its incident facets (see \cref{fig:stress_on_face}, left).
    One may think of this term as a force resulting from pressure from within the polytope, though this analogy is mostly non-physical: the forces are not proportional to the volumes of the facets and may even have varying signs from facet to facet.

    Second, the stress condition \eqref{eq:stress_at_face} for a facet $\sigma\in F$ can be interpreted as a force equilibrium at the origin: each vertex $i\sim\sigma$ pushes or pulls the origin along their connecting line (\cf\ \cref{fig:stress_on_face}, right).
\end{remark}

Like first-order motions, stresses form a linear space -- the \Def{stress space}.
Stresses are particularly relevant in second-order theory (see \cref{sec:second_order}), but also play a special role in the first-order study of polyhedra (\ie\ 3-dimensional polytopes).

\begin{proposition}
    \label{res:flex_eq_stress}
    If $P \subset\RR^3$ is a polyhedron, then the dimension of its first-order flex space equals the dimension of the space of equilibrium stresses.
    In particular, a polyhedron is first-order rigid if and only if it has no non-zero stress.
\end{proposition}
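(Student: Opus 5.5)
Our plan is a dimension count via rank--nullity, combined with Euler's formula and \cref{res:trivial_dim}. The rigidity matrix $\mathcal R(P)$ has $E+\VF$ rows and $3V+3F$ columns. Writing $r=\rank\mathcal R(P)$, we have
\[
\dim\ker\mathcal R(P)=3V+3F-r,\qquad \dim\ker\mathcal R(P)\T = E+\VF-r .
\]
By \cref{res:trivial_dim}, the first-order flex space $\ker\mathcal R(P)/\mathcal T_P$ has dimension $3V+3F-r-6$. The stress space has dimension $E+\VF-r$. The claim is therefore equivalent to the purely combinatorial identity
\[
3V+3F-6 = E+\VF .
\]
Since $r$ cancels, no geometric information beyond \cref{res:trivial_dim} is needed.

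To verify the identity, we count vertex-facet incidences through the edges. Each facet $\sigma$ of a 3-polytope is a polygon, so the number of vertices incident to $\sigma$ equals the number of its edges. Moreover, every edge lies in exactly two facets. Summing over $\sigma\in F$ gives
\[
\VF=\sum_{\sigma\in F}\#\{i: i\sim\sigma\}=2E .
\]
Hence $E+\VF=3E$. Euler's formula $V-E+F=2$ for 3-polytopes gives $3V+3F-6=3E$, which is exactly the required identity.

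The second sentence of the proposition then follows at once. The polyhedron is first-order rigid exactly when the flex space is zero (this is \cref{res:corank_rigid}), which by the equality of dimensions happens exactly when the stress space is zero.

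The only step needing care is the incidence count $\VF=2E$. It uses that facets of a 3-polytope are polygons with equally many vertices and edges, and that the edge graph $G_{\mathcal P}$ defined combinatorially in \cref{sec:combinatorics-of-polytopes} agrees with the geometric edges of $P$. Both are standard facts about convex polytopes. One should also note that \cref{res:trivial_dim} gives $\dim\mathcal T_P=6$ exactly (not just a lower bound), since $P$ is full-dimensional. With that, the argument is complete.
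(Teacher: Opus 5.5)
Your proposal is correct and follows essentially the same route as the paper. Both arguments count rows against columns of $\mathcal R(P)$ using $\VF=2E$ and Euler's formula, then combine this with \cref{res:trivial_dim} to get $\dim(\text{flexes})=\corank\mathcal R(P)-6=\corank\mathcal R(P)\T$. The only cosmetic difference is in how $\VF=2E$ is obtained: you sum over facets (each polygon has as many vertices as edges, and each edge lies in two facets), while the paper sums over vertices (each vertex has as many incident facets as incident edges) and then applies the handshaking lemma.
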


\begin{proof}

    For each vertex, the number of incident faces equals the number of~incident edges. Thus, the total number of vertex-face incidences in $P$ (denoted by $\VF$)~equals the total number of vertex-edge incidences, which (by the handshaking lemma) is equal to $2E$.
    Using Euler's polyhedral formula $V-E+F=2$, we find the following relation between the number of rows and columns of the rigidity matrix $\mathcal{R}(P)$:
    \begin{align*}
        (*)\quad\text{\#rows}=\VF+E=2E+E=3E&=3(V+F-2)
        \\&=3V+3F-6=\text{\#columns}-6.
    \end{align*}
    The claim now follows from
    \[
        \dim(\text{first-order flexes}) \overset{\ref{res:trivial_dim}}= \corank\mathcal R(P) - 6 \overset{(*)}= \corank \mathcal R(P)\T = \dim(\text{stresses}).
    \qedhere \]
\end{proof}


The advantage of checking first-order rigidity via stresses is that stresses have~no analogue of the trivial first-order flexes that have to be treated separately.
This~fact is important in Dehn's proof of first-order rigidity for simplicial polytopes \cite{dehnstheorem}, and with \cref{res:flex_eq_stress} we highlight that it holds for non-simplicial polytopes as well.

\section{Second-order theory}
\label{sec:second_order}
\label{sec:second-order-theory}

Second-order rigidity theory investigates deformations that preserve constraints up to second order.
It is mainly employed for the analysis of systems that fail the first-order rigidity test.
For that, we follow the work of Connelly and Whiteley \cite{connelly1996second}. 

\subsection{Second-order rigidity}

A tuple $(\dot P,\ddot P) = (\bsdot p,\bsdot a;\bsddot p,\bsddot a)$ of maps $\bsdot p,\bsddot p\:V\to\RR^d$ and $\bsdot a,\bsddot a\:F\to\RR^d$ is called a \Def{second-order motion} of $P$ if $(\bsdot p,\bsdot a)$ forms a first-order motion and additionally the following equations hold:
\begin{align}
    \label{eq1}
    \<p_i-p_j,\,\ddot{p}_i-\ddot{p}_j\>+\<\dot{p}_i-\dot{p}_j,\,\dot{p}_i-\dot{p}_j\> & =0, \quad\text{whenever $ij\in E$},
    \\
    \label{eq2}
    \<p_i,\ddot{a}_\sigma\>+2\< \dot{p}_i,\dot{a}_\sigma\>+\<\ddot p_i,a_\sigma\> &= 0,\quad\text{whenever $i\sim \sigma$}.
\end{align}
We call a second-order motion \Def{trivial} if $(\bsdot p,\bsdot a)$ is trivial as a first-order motion. 
A non-trivial second-order motion is called a \Def{second-order flex}.
If $P$ has some second-order flex, then it is called \Def{second-order flexible}, and \Def{second-order rigid} otherwise.

Second-order rigidity provides a certificate for rigidity:

\begin{theorem}         \label{res:second_order_rigid_impl_rigid}
    \label{res:second_order}
    \label{res:second-order-implies-rigid}
    If a polytope is second-order rigid, then it is rigid. 
\end{theorem}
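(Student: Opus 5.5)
We prove the contrapositive, mirroring the proof of \cref{res:first_order}: assuming $P$ is flexible, we produce a second-order flex. As there, let $M\subseteq\mreal(\mathcal P,P)$ be the algebraic subset obtained by pinning sufficiently many vertex coordinates to cancel the trivial motions. Flexibility means $P$ is not isolated in $M$. The curve selection lemma then gives a non-constant analytic path $P^t=(\bs p^t,\bs a^t)$ in $M$ with $P^0=P$. After adding trivial motions and choosing $k\ge 1$ maximal as before, the orders $1,\dots,k-1$ of the expansion can be cancelled, while the $k$-th derivative $P^{(k)}:=(\tfrac{\mathrm d}{\mathrm dt})^k_{t=0}P^t$ is a non-trivial first-order motion. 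This part is identical to the proof of \cref{res:first_order}.

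The key step is to reparametrize so that the leading term sits in first order. Set $s=t^k$ and write $P^t=P+\tfrac{t^k}{k!}P^{(k)}+\tfrac{t^{k+1}}{(k+1)!}P^{(k+1)}+\dots$. We want candidates $\dot P=P^{(k)}/k!$ and $\ddot P=2\,P^{(2k)}/(2k)!$ (up to normalization). To justify this choice, expand the constraints $\|p_i^t-p_j^t\|^2=\mathrm{const}$ and $\<p_i^t,a_\sigma^t\>=1$ in powers of $t$ and read off the coefficient of $t^{2k}$. Because orders $1,\dots,k-1$ vanish, the only contributions are the products of order $0$ with order $2k$ and of order $k$ with order $k$. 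For an edge $ij$ this gives
\[
2\<p_i-p_j,\tfrac{1}{(2k)!}(p^{(2k)}_i-p^{(2k)}_j)\>+\tfrac{1}{(k!)^2}\|p^{(k)}_i-p^{(k)}_j\|^2=0 .
\]
For an incidence $i\sim\sigma$ it gives
\[
\<p_i,\tfrac{a^{(2k)}_\sigma}{(2k)!}\>+\<\tfrac{p^{(k)}_i}{k!},\tfrac{a^{(k)}_\sigma}{k!}\>+\<\tfrac{p^{(2k)}_i}{(2k)!},a_\sigma\>=0 .
\]
With $\dot P=P^{(k)}/k!$ and $\ddot P=2P^{(2k)}/(2k)!$, these become exactly \eqref{eq1} and \eqref{eq2}; the factor $2$ in \eqref{eq2} matches the cross term. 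The first-order conditions for $\dot P$ hold by the $t^k$ coefficient.

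The main obstacle is the bookkeeping of intermediate orders. If $k>1$, orders strictly between $k$ and $2k$ might contribute cross terms to the $t^{2k}$ coefficient. They do not: any product of orders $\ell,m$ with $\ell+m=2k$ and $0<\ell<k$ involves an order in $\{1,\dots,k-1\}$, which vanishes. So only the pairs $(0,2k)$, $(k,k)$ and $(2k,0)$ survive, as claimed.

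It remains to check that the cancellation of low orders by trivial motions keeps $P^t$ inside $\mreal(\mathcal P,P)$. This holds because we compose with an analytic family of isometries, which preserves all constraints. Non-triviality of the resulting second-order motion is just non-triviality of $\dot P$, which was shown exactly as in \cref{res:first_order}. Hence $(\dot P,\ddot P)$ is a second-order flex, and $P$ is second-order flexible.
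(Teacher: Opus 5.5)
Your proposal is correct and follows the paper's proof step by step: the same contrapositive, the same pinning and curve selection, the same maximal $k$, and the same observation that only the order pairs $(0,2k)$, $(k,k)$, $(2k,0)$ survive in the $t^{2k}$ coefficients. The differences are cosmetic. You normalize with Taylor coefficients, $\dot P=P^{(k)}/k!$ and $\ddot P=2P^{(2k)}/(2k)!$, whereas the paper takes $\dot P=P^{(k)}$ and $\ddot P=2\binom{2k}{k}^{-1}P^{(2k)}$; the two pairs differ by the rescaling $(\dot P,\ddot P)\mapsto(k!\,\dot P,(k!)^2\ddot P)$, which preserves \eqref{eq1} and \eqref{eq2}. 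Also, the announced reparametrization $s=t^k$ is never actually used, so it can simply be dropped.
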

\begin{proof}
    The proof is largely analogous to the proof of \cref{res:first_order}. 
    As before, we prove the contrapositive: we assume that $P$ is flexible and show that it is second-order flexible.
    Through restricting vertices to suitable subspaces and invoking the curve selection lemma \cite[Lemma 18.3]{algebraicapproximationcurves}, we obtain an analytic flex $P^t$.
    There is then a largest integer $k$ so that adding a suitably chosen trivial motion simultaneously cancels all terms of order $1,...,k-1$ in the Taylor expansion of $P^t$.

    This time we consider the $2k$-th derivatives of the first-order motion constraints.
    For the edge length constraint \eqref{eq:flex_at_vertex} at $ij\in E$ we obtain
    \begin{align*}
        0 
        &= 
        \Big(\frac{\mathrm d}{\mathrm dt}\Big)^{\!2k}_{\!t=0} \|p_i-p_j\|^2 
        = 
        \sum_{\ell=0}^{2k}\binom{2k}{\ell} \Big\<\Bigl(\frac{\mathrm d}{\mathrm dt}\Bigr)^{\!\ell}_{\!t=0}(p_i-p_j), \,\Bigl(\frac{\mathrm d}{\mathrm dt}\Bigr)^{\!2k-\ell}_{\!t=0}(p_i-p_j)\Big\>
        \\&\overset{\mathclap{(*)}}=\;
        2\Big\<p_i-p_j,\, \Bigl(\frac{\mathrm d}{\mathrm dt}\Bigr)^{\!2k}_{\!t=0}(p_i-p_j)\Big>
            + \binom{2k}{k} \Big\<\Bigl(\frac{\mathrm d}{\mathrm dt}\Bigr)^{\!k}_{\!t=0}(p_i-p_j),\, \Bigl(\frac{\mathrm d}{\mathrm dt}\Bigr)^{\!k}_{\!t=0}(p_i-p_j)\Big\>
    \end{align*}
    where in $(*)$ we use that most terms are zero.
    Analogously, for the vertex-facet~incidence constraint \eqref{eq:flex_at_vertex_face} at $i\sim \sigma$ we obtain
    \begin{align*}
        0 
        &= 
        \Big(\frac{\mathrm d}{\mathrm dt}\Big)^{\!2k}_{\!t=0} \<p_i,a_\sigma\>
        = 
        \sum_{\ell=0}^{2k}\binom{2k}{\ell} \Big\< \Bigl(\frac{\mathrm d}{\mathrm dt}\Bigr)^{\!\ell}_{\!t=0} \,p_i,\, \Bigl(\frac{\mathrm d}{\mathrm dt}\Bigr)^{\!2k-\ell}_{\!t=0} a_\sigma\Big\>
        \\&=
        \Big<p_i,\, \Bigl(\frac{\mathrm d}{\mathrm dt}\Bigr)^{\!2k}\!\!a_k\Big\>
            + \Big\<\Bigl(\frac{\mathrm d}{\mathrm dt}\Bigr)^{\!2k}_{\!t=0} \,p_i,\, a_k\Big\>
            + \binom{2k}{k}\Big\<\Bigl(\frac{\mathrm d}{\mathrm dt}\Bigr)^{\!k}_{\!t=0} \,p_i,\,
                \Bigl(\frac{\mathrm d}{\mathrm dt}\Bigr)^{\!k}_{\!t=0} a_k\Big\>.
    \end{align*}
    Observe that these two identities express that
    $$\dot P := \Big(\frac{\mathrm d}{\mathrm dt}\Big)^k_{t=0} P
    \quad\text{and}\quad
    \ddot P := 2 \binom{2k}{k}^{-1}\Big(\frac{\mathrm d}{\mathrm dt}\Big)^{2k}_{t=0} P$$ 
    together form a second-order motion of $P$.
    It remains to show that $\dot P$ is non-trivial.
    Analogously to the proof of \cref{res:first_order}, this follows from the choice of $k$.
\end{proof}

The notion of a second-order flex is not easy to apply directly.
In contrast~to~first-order theory, the defining equations \eqref{eq1} and \eqref{eq2} are quadratic rather than linear in the unknowns $(\bsdot p,\bsddot p;\bsdot a,\bsddot a)$.
%
%
Given a candidate first-order flex $(\bsdot p,\bsdot a)$, the system becomes linear in $(\bsddot p,\bsddot a)$ and we can check efficiently if $(\bsdot p,\bsdot a)$ extends to a second-order flex.
This allows us to check second-order rigidity at least for systems whose space of first-order flexes is 1-dimensional.
If the dimension is higher, there is no obvious way to choose the candidate first-order flexes from which to start.

\subsection{The second-order stress test}
\label{sec:stress_test}

In \cite[Section 1.4]{connelly1996second} Connelly and Whiteley developed a second-order rigidity test that reinterprets second-order rigidity as the study of the bilinear pairing between first-order motions and equilibrium stresses. We adapt this technique to our setting.

\begin{theorem}[Stress test] \label{res:sotest}
    A polytope is second-order rigid if and only if for every first-order flex $\dot P$ there is a \Def{blocking} stress, that is, a stress $\bs\xi$ with
    \begin{eqnarray}
    \label{block}
    {\bs \xi}\T \mathcal  R(\dot P)\dot P\not=0.\end{eqnarray}
\end{theorem}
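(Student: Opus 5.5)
Both directions reduce to the Fredholm alternative for the linear system that the second-order motion equations \eqref{eq1} and \eqref{eq2} impose on $\ddot P=(\bsddot p,\bsddot a)$ once $\dot P$ is fixed. I will first observe that \eqref{eq1} and \eqref{eq2} can be written in matrix form as
\begin{equation*}
\mathcal R(P)\,\ddot P = -\mathcal R(\dot P)\,\dot P,
\end{equation*}
where $\mathcal R(\dot P)$ is the rigidity matrix with $(\bs p,\bs a)$ replaced by $(\bsdot p,\bsdot a)$.

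This uses the bilinear structure of $\mathcal R$. The edge row of $\mathcal R(\bs p,\bs a)$ applied to $(\bs q,\bs b)$ is $\<p_i-p_j,q_i-q_j\>$, up to the scaling fixed in the figure. The vertex-facet row is $\<p_i,b_\sigma\>+\<q_i,a_\sigma\>$. Thus $\mathcal R(\dot P)\dot P$ has entries $\<\dot p_i-\dot p_j,\dot p_i-\dot p_j\>$ and $2\<\dot p_i,\dot a_\sigma\>$, which are exactly the quadratic terms in \eqref{eq1} and \eqref{eq2}. The only routine check is that the scaling conventions of the edge rows match, so that the edge equation carries the coefficient needed here. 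If the conventions differ, I would absorb a constant factor into $\ddot P$ or into the stress, which does not affect the vanishing of the pairing.

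With this rewriting, $\dot P$ extends to a second-order motion if and only if $\mathcal R(\dot P)\dot P\in\operatorname{im}\mathcal R(P)$. By linear algebra, $\operatorname{im}\mathcal R(P)=(\ker\mathcal R(P)\T)^\perp$, and $\ker\mathcal R(P)\T$ is by definition the stress space. So $\dot P$ extends exactly when $\bs\xi\T\mathcal R(\dot P)\dot P=0$ for every stress $\bs\xi$.

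The theorem then follows by negating this on both sides. $P$ is second-order rigid if and only if no first-order flex extends, since a second-order flex is by definition a second-order motion whose first-order part is non-trivial. This holds if and only if, for each first-order flex $\dot P$, the vector $\mathcal R(\dot P)\dot P$ is not orthogonal to the stress space, which means some stress $\bs\xi$ satisfies \eqref{block}.

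The main obstacle is just the careful bookkeeping in the first step: confirming that the quadratic parts of \eqref{eq1} and \eqref{eq2} coincide exactly with $\mathcal R(\dot P)\dot P$, including the factor $2$ in the incidence rows. Everything else is the Fredholm alternative.
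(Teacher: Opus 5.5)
Your proposal is correct and takes the same approach as the paper: it rewrites \eqref{eq1}--\eqref{eq2} as $\mathcal R(P)\ddot P=-\mathcal R(\dot P)\dot P$ and applies the alternative ``either $A\ddot P=b$ is solvable or some $\bs\xi$ has $A\T\bs\xi=0$ and $\bs\xi\T b\neq 0$''; the paper calls this Farkas' lemma, but it is exactly the Fredholm alternative $\operatorname{im}A=(\ker A\T)^\perp$ that you use. Your worry about scaling conventions is unfounded: any row scaling of $\mathcal R$ multiplies $\mathcal R(P)\ddot P$ and $\mathcal R(\dot P)\dot P$ by the same factor, so the system and the vanishing of the stress pairing are unaffected.
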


\begin{proof}
    Fix a first-order flex $\dot P=(\bsdot p,\bsdot a)$. We shall show that it can be extended~to~a second-order flex if and only if there is no blocking stress.

    Recall that, after fixing $\dot P$, the equations \eqref{eq1} and \eqref{eq2} become linear in $\ddot P=(\bsddot p,\bsddot a)$.
    The linear system can be compactly expressed as
    \begin{equation}\label{eq:compact}
        \mathcal R(P) \ddot{P}= -\mathcal R(\dot{P}) \dot{P}.
    \end{equation}
    This system has a solution in $\ddot P$ if and only if $\dot P$ extends to a second-order flex.

    The existence of a blocking stress $\bs\xi$ for $\dot P$ can be expressed by the system
    \begin{equation}\label{eq:blocking_system}
        \mathcal R(P)\T\bs\xi = 0,\qquad {\bs \xi}\T \mathcal  R(\dot P)\dot P\not=0.
    \end{equation}

    It remains to observe that the systems \eqref{eq:compact} and \eqref{eq:blocking_system} are dual in the sense of Farkas' lemma: if we set $A:=\mathcal R(P)$ and $b:=-\mathcal R(\dot P)\dot P$, then either there is~a~$\ddot P$~with $A\ddot P = b$, or there is a $\bs\xi$ with $A\T\bs\xi=0$ and $\bs\xi\T b\not=0$.
    \qedhere

\end{proof}

Following \cite{connelly1996second}, we also introduce the notion of prestress stability: a polytope is \Def{prestress stable} if there is a single stress $\bs\xi$ that blocks every first-order flex $\dot P$. By reversing the quantifiers from the criterion for second-order rigidity in \Cref{res:sotest}, this creates a stronger sufficient condition for rigidity. 



\begin{corollary}
    \label{res:prestress}
    first-order rigid $\implies$ prestress stable $\implies$ second-order rigid.
\end{corollary}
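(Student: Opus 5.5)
Both implications follow directly from the definitions together with \cref{res:sotest}, so the plan is to prove them separately, the second one first since it is the more immediate.

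\emph{Prestress stable $\Rightarrow$ second-order rigid.} Suppose $\bs\xi$ is a single stress with $\bs\xi\T\mathcal R(\dot P)\dot P\neq 0$ for every first-order flex $\dot P$. Then for each first-order flex we may take this same $\bs\xi$ as the blocking stress. The criterion of \cref{res:sotest} is therefore satisfied, and $P$ is second-order rigid. The only point to note is the order of quantifiers: ``there exists $\bs\xi$ such that for all $\dot P$'' implies ``for all $\dot P$ there exists $\bs\xi$''.

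\emph{First-order rigid $\Rightarrow$ prestress stable.} If $P$ is first-order rigid, then by definition $\ker\mathcal R(P)=\mathcal T_P$, so there are no first-order flexes. The condition ``some stress blocks every first-order flex'' then holds vacuously, and any stress, for instance $\bs\xi=0$, witnesses it. This uses only that the stress space is nonempty, which is true because it is a linear space.

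The only step needing care is the convention for a \emph{flex}. Prestress stability quantifies over non-trivial first-order motions only. It must, since trivial motions cannot be blocked: a trivial $\dot P$ is the derivative of a path of congruent realizations, and differentiating the constraints twice along that path shows that $-\mathcal R(\dot P)\dot P$ lies in the image of $\mathcal R(P)$. Hence $\bs\xi\T\mathcal R(\dot P)\dot P=0$ for every stress $\bs\xi$. So the vacuous argument is consistent with the definitions, and no genuine obstacle remains.
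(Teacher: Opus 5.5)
Your proposal is correct and matches the paper's proof. The first implication holds vacuously because a first-order rigid polytope has no first-order flexes, and the second follows from \cref{res:sotest} by the quantifier swap from ``one stress blocks every flex'' to ``every flex has a blocking stress''. Your extra remark that trivial first-order motions extend to second-order motions, and so are never blocked, is correct but not needed for the argument.
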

\begin{proof}
    If $P$ is first-order rigid, then there are no first-order flexes, and the condition of prestress stability is trivially satisfied.
    The second implication is clear~from~\cref{res:sotest} and the definition of prestress stability.
\end{proof}

\tempnewpage

\section{Edge Length Perturbations}
\label{section:edge-length-perturbations}

For this section, we restrict ourselves to polyhedra, \ie\ 3-dimensional polytopes, which come with various theoretical advantages over the general case (see \eg\ \cite{himmelmannschulzewinter2025rigiditypolytopesedgelength}). Here, we investigate 
$\real(\mathcal{P})$ locally by considering the space of all realizations that arise from perturbing some of the edge lengths of $P$. We establish topological techniques that provide novel conditions for the rigidity and flexibility of polyhedra, but also provide insights into the geometry of $\mreal(\mathcal{P},P)$ by providing an intuition for how first-order flexes are related to 
curves in $\real({\mathcal{P}})$.

According to the Legendre-Steinitz theorem \cite{rastanawi2021dimensions}, the realization space $\real({\mathcal{P}})$ is a $(E+6)$-dimensional manifold. At every polytopal realization, the trivial first-order motions form a 6-dimensional subspace of the space of first-order motions (\cf\ \Cref{res:trivial_dim}) which is completely determined by the 6-dimensional Lie group of Euclidean isometries $\Iso$.  
This turns $\realR(\mathcal{P})$ into a smooth manifold of dimension
\[\dim\left(\realR(\mathcal{P})\right)=\dim(\real(\mathcal{P}))-\dim(\mathrm{Iso})=(E+6)-6=E.
\]
In other words, all polyhedra have $E$ degrees of freedom when pinning a vertex, constraining an edge to a line and a face to a plane (\cf\ \cite{gortler2025higherorderrigidityenergy}). 
This modification factors out the trivial motions from $\real(\mathcal{P})$. 
The fact that $\dim(\real(\mathcal{P}))$ is exactly equal to the number of edges of $\mathcal{P}$ suggests that the degrees of freedom exactly correspond to perturbations of edge lengths. 
Since rigid realizations correspond to isolated points in $\mreal^*(\mathcal{P},P)$, the degrees of freedom in $\realR(\mathcal{P})$ must concentrate on the deformations that change the edge lengths. This line of thought would imply that we can freely perturb the edge lengths locally around rigid realizations (see \Cref{conj:edge_length_perturbations}). We formalize this intuition in the following definition.
    
\begin{definition}
\label{def:edge-length-perturbations}
The
 \Def{edge length vector} of $P$ is given by
 \[{\bs\ell}(P)\,:=\,\left(\,\frac{1}{2}\|p_i-p_j\|^2 ~\bigg\lvert~ij\in E\right).\]
This gives rise to the \Def{edge length map} $\bs\ell:\real(\mathcal{P})\rightarrow \mathbb{R}^E$. We say that a vector ${\bs\ell}'\in \mathbb{R}^{E}$ is \Def{realized} by $P$ if ${\bs\ell}'={\bs\ell}(P)$. The \Def{edge length realization space} $\mathcal{L(P)}$ of $P$ consists of all possible vectors of edge lengths corresponding to convex realizations of $\mathcal P$.
Moreover, $P$ \Def{has all edge length perturbations} if the edge length map $\bs\ell$ is locally surjective in a neighborhood of $P$. 
If that is not the case, 
we say that \Def{some of $P$'s edge length perturbations are not realizable}.
\end{definition}

Note that \Cref{def:edge-length-perturbations} can analogously be stated for $d$-dimensional polytopes; however, our focus on 3-dimensional polytopes is justified in that most results of this section use the Legendre-Steinitz theorem in an essential way.

Let us first consider an example of the first-order rigid regular tetrahedron. For this polyhedron, all edge length perturbations are realizable.

\begin{example}
    \label{ex:tetrahedron-edge-length-perturbations}
    Consider the regular tetrahedron with realization $P$.
    As a polytope, it is first-order rigid according to Dehn's theorem \cite{dehnstheorem}. The facet planarity constraints are superfluous in this case, as all facets are triangles. As the edge graph of the tetrahedron is the complete graph $K_4$, this polyhedron is first-order rigid, in fact, even minimally first-order rigid in $\mathbb{R}^3$, as a bar-joint framework, meaning that the removal of any edge renders it flexible. Given any sufficiently small perturbation ${\bs\ell}'\in \mathbb{R}^{E}$ of ${\bs\ell}(P)$, we can thus remove the length constraint from any edge to make the tetrahedron flexible, and then deform it until the particular edge length attains the corresponding value in $\bs\ell'$.
    We can iteratively repeat this process until we find a tetrahedral realization $Q$ sufficiently close to $P$ which satisfies ${\bs\ell}'={\bs\ell}(Q)$.
\end{example}

\Cref{def:edge-length-perturbations} can be used to formulate a relationship between a polyhedron's rigidity and the topological properties of its metric realization space $\mreal(\mathcal{P},P)$. With this language, we can show that first-order rigid polyhedra concentrate their degrees of freedom on deformations induced by edge length perturbations.

\begin{proposition}
    \label{prop:inf-rigidity-edge-perturbations}
    A first-order rigid polyhedron $P$ has all edge length perturbations.
\begin{proof}
    Consider the following commutative diagram:
    \[\begin{tikzcd}
    	{\RR^{dV}\oplus\RR^{dF}} && {\RR^E\oplus \RR^{V\!F}} \\
    	\\
    	{\real(\mathcal P)} && {\RR^E}
    	\arrow["f", from=1-1, to=1-3]
    	\arrow["\imath", hook, from=3-1, to=1-1]
    	\arrow["{\boldsymbol{\ell}}", from=3-1, to=3-3]
    	\arrow[hook, from=3-3, to=1-3, "\jmath"]
    \end{tikzcd}\]
    %
    The vertical arrows $\imath$ and $\jmath$ are the canonical embeddings.
    The arrow $\bs \ell$ is the edge length map. The arrow $f$ maps an arbitrary pair $(\bs p,\bs a)$ to the vector of lengths $\frac{1}{2}\|p_i-p_j\|^2$ for $ij\in E$ and coplanarity-defects $\<p_i,a_\sigma\>$ for $i\sim \sigma$.

    We observe that the Jacobian of $f$ at the realization $P$ is equal to the rigidity matrix $\mathcal R(P)$, and since $P$ is first-order rigid, \cref{res:corank_rigid} yields that
    \[\rank f = \rank\mathcal R(P)=dV+dF-\corank\mathcal R(P)=dV+dF-6,\]
    where the rank of a map is given by the rank of its Jacobian at the realization $P$.
    The map $\imath$ has rank $E+6$ by the Legendre-Steinitz theorem. 
    From the commutative diagram and basic results about the rank of matrix products, we obtain    
    \begin{align*}
        \rank(\bs\ell)\ge \rank(\jmath\circ \bs\ell) &= \rank(f\circ\imath)
        \\
        &\ge \rank(f) + \rank(\imath) - \dim(\RR^{dV}\oplus\RR^{dF})
        \\
        &= (dV+dF-6) + (E+6) - (dV+dF) 
        \\ &= E.
    \end{align*}
    Since $\bs\ell$ maps into $\RR^E$, it follows that the map has the maximal $\rank(\bs\ell)= E$. Finally, $\bs\ell$ is smooth, so
    we conclude that $\bs \ell$ is a local submersion (\cf\ \cite[Proposition 4.1]{lee-smooth-manifolds}) and hence locally surjective (\cf\ \cite[Proposition 2.2]{lang1999}). 
\end{proof}
\end{proposition}

A major result of \cite{himmelmannschulzewinter2025rigiditypolytopesedgelength} is that almost all realizations of polyhedra are first-order rigid.  
This result requires a suitable definition of \emph{genericity}, which is omitted here for the sake of brevity (see \cite[Definition 5.3.]{himmelmannschulzewinter2025rigiditypolytopesedgelength} for details). Intuitively, we can obtain a generic realization by drawing a random convex realization from $\real(\mathcal{P})$. We can apply the statement on the generic rigidity of polyhedra to \Cref{prop:inf-rigidity-edge-perturbations} in order to obtain a genericity result on the edge length perturbation space. 


\begin{corollary}
    Generic polyhedral realizations have all edge length perturbations.
    \begin{proof}
        Take any generic polyhedral realization $P$ of a combinatorial type $\mathcal P$.
        Accor\-ding to \cite[Theorem 1.2]{himmelmannschulzewinter2025rigiditypolytopesedgelength} $P$ is first-order rigid. Hence, \Cref{prop:inf-rigidity-edge-perturbations} is applicable, showing that $P$ has all edge length perturbations.
    \end{proof}
\end{corollary}

In contrast to \Cref{ex:tetrahedron-edge-length-perturbations}, we now consider a flexible polyhedron in the following example. This example highlights that not all realizations of a polyhedron have all edge length perturbations.

\begin{example}
\label{ex:cube-edge-length-perturbations}
    Consider the regular cube, whose vertices are realized by the points in $\{1,-1\}^3$. This is a flexible polyhedron, as it is a zonotope (see \Cref{sec:dimension_estimates}). It is not possible to continuously perturb a single edge without violating the planarity of the facets; the minimum requirement for realizing an edge length perturbation is to simultaneously adjust two 
    antipodal edges. 
\end{example}

We can turn the observation from this example, that not all polyhedra admit all edge length perturbations, into a general statement about the edge length perturbation space. 
However, we need to assert a slightly stronger condition than the local surjectivity of the edge length map $\bs\ell$, which is that we can construct a continuous, locally injective map $\psi:\mathbb{R}^E\rightarrow \real(\mathcal{P})$ that maps vectors in $\mathbb{R}^E$ sufficiently close to the polyhedron's edge length vector to distinct polyhedra sufficiently close to $P$ of the same combinatorial type such that $\psi(\bs\ell(P))=P$. If that is the case, we say that \Def{all edge length perturbations of $P$ are realizable via a continuous, locally injective map}.

\begin{theorem}
\label{thm:rigidity-and-edge-length-perturbations}
    If all edge length perturbations of $P$ are realizable via a continuous, locally injective map, then $P$ is rigid. 
    \begin{proof}
        Let $\mathcal P$ denote the combinatorial type of the polyhedron $P$. By a previous observation, the reduced realization space $\realR(\mathcal{P})$ of $\mathcal{P}$ is a smooth $E$-dimensional manifold with the local homeomorphism $\varphi:U_P\rightarrow \mathbb{R}^{E}$, which is defined in an open neighborhood $U_P\subset \realR(\mathcal{P})$ of $P$. 

        Assume now that all of $P$'s edge length perturbations are realizable via 
        the continuous map $\psi:\mathbb{R}^{E}\rightarrow \realR(\mathcal{P})$. This map takes a list of edge lengths and produces a realization of $\mathcal{P}$ with $\psi(\bs\ell(P))=P$ and it is injective in a sufficiently small open neighborhood $U_{{\bs\ell}(P)}\subset\mathbb{R}^{E}$ of ${\bs\ell}(P)$. After sufficiently shrinking $U_{\bs\ell(P)}$, we can assume that $\psi: U_{\bs\ell(P)}\rightarrow U_P$ is continuous and injective.
        
	\begin{figure}[h!]
		\centering
		\begin{tikzpicture}[node distance=3.5cm, auto]
			
			\node (R1) { $\mathbb{R}^{E}$};
            \node (UR) [right=-0.1cm of R1] { $\supset U_{{\bs\ell}(P)}$};
			\node (L) [below=-0.075cm of UR] { \rotatebox[origin=c]{90}{$\in$}};
			\node (L2) [below=-0.075cm of L] { 
			\small${\bs\ell}(P)$};
			\node (CP) [right=of R1] {~\small$U_P$~};
            \node (L3) [below=-0.075cm of CP] { \rotatebox[origin=c]{90}{$\supset$}};
			\node (L4) [below=-0.075cm of L3] { 
			\small$\realR(\mathcal{P})$};
			\node (R2) [right=of CP] {$\mathbb{R}^{E}$};
			
			\draw[->, thick] (UR) to node[above]{ $\psi$} (CP);
			\draw[->, thick] (CP) to node[above]{ $\varphi$} node[below]{\small $\mathrm{Legendre}$-$\mathrm{Steinitz}$} (R2);
			
			\draw[->, thick, bend left=20] ([xshift=0.1cm,yshift=0.035cm]UR.north) to node[above=0.1cm] {\small $\mathrm{Invariance}$ $\mathrm{of}$ $\mathrm{Domain}$} ([xshift=-0.1cm,yshift=0.035cm]R2.north);
			
		\end{tikzpicture}
        \caption{A diagram for the topological argument used in this proof}
        \label{fig:proofhelper-invariance-of-domain}
	\end{figure}
    
        We visualize the following topological arguments in \Cref{fig:proofhelper-invariance-of-domain}. By the Invariance of Domain Theorem \cite{invariance-of-domain}, the continuous injection $\varphi\circ \psi: U_{{\bs\ell}(P)}\rightarrow V\subset \mathbb{R}^{E}$ for \mbox{$V=\varphi\circ \psi (U_{{\bs\ell}(P)})$} is a homeomorphism on its domain and $V$ is open. This lets us pull back the open set $V$ to the open set $\varphi^{-1}(V)$ in $\realR(\mathcal{P})$, since $\varphi$ is a local homeomorphism. 
        Hence, $\psi:U_{{\bs\ell}(P)}\rightarrow \varphi^{-1}(V)$ is a homeomorphism, since $\psi$ can be expressed as $\varphi^{-1}\circ(\varphi\circ \psi)$ and since $\varphi^{-1}$ and $\varphi\circ \psi$ are both homeomorphisms.
        
        Consequently, there exists an open neighborhood around $P$ in the manifold $\realR(\mathcal{P})$, in which all realizations come from edge length perturbations. In other words, there is no curve in $\realR(\mathcal{P})$ through the realization $P$ which preserves the edge lengths ${\bs\ell}(P)$, so $P$ is rigid. 
    \end{proof}
\end{theorem}
The assumption that $\psi$ is continuous is equivalent to the local continuity of $\psi$ in the neighborhood $U_{\bs\ell(P)}$: We can find a sufficiently small ball around $\bs\ell(P)$ inside the open set $U_{\bs\ell(P)}$, which lets us continuously extend $\psi$ to all of $\mathbb{R}^E$ since an open ball is homeomorphic to $\mathbb{R}^E$. Moreover, we show in the proof of \Cref{thm:rigidity-and-edge-length-perturbations} that $\psi$ must be a local homeomorphism under these assumptions. 

Since \Cref{thm:rigidity-and-edge-length-perturbations} provides a sufficient condition for the rigidity of polyhedra, its contrapositive yields information about the reduced realization space $\realR(\mathcal{P})$. This observation is sufficiently useful to state explicitly as a corollary. Albeit, it is currently unclear whether checking the theorem's preconditions offers any advantage over directly verifying a polyhedron's rigidity or flexibility.

\begin{corollary}
    If a polyhedron $P$ of combinatorial type $\mathcal{P}$ is flexible, then there exists no continuous map $\psi:\mathbb{R}^E\rightarrow \realR(\mathcal{P})$ that is locally injective around $\bs\ell(P)$ with $\psi(\bs\ell(P))=P$.
\end{corollary}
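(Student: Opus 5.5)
This corollary is the contrapositive of \Cref{thm:rigidity-and-edge-length-perturbations}. Suppose $P$ is flexible and, for contradiction, that there is a continuous map $\psi:\mathbb{R}^E\to\realR(\mathcal P)$ with $\psi(\bs\ell(P))=P$ that is injective on some open neighborhood $U_{\bs\ell(P)}$ of $\bs\ell(P)$. The map $\psi$ then realizes all edge length perturbations of $P$ via a continuous, locally injective map. The only condition in that definition not stated verbatim in the hypothesis is that $\psi$ produces realizations with the prescribed edge lengths, i.e.\ $\bs\ell\circ\psi=\mathrm{id}$ near $\bs\ell(P)$. I will read the corollary with this compatibility included, as the surrounding text implicitly does. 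Then \Cref{thm:rigidity-and-edge-length-perturbations} gives that $P$ is rigid, contradicting flexibility.

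For a self-contained argument, I would rerun the key topological step. The Legendre-Steinitz chart $\varphi:U_P\to\mathbb{R}^E$ around $P$ in the $E$-dimensional manifold $\realR(\mathcal P)$ composes with $\psi$, restricted to a small ball $B\subset U_{\bs\ell(P)}$ with $\psi(B)\subset U_P$ (possible by continuity). This gives a continuous injection $\varphi\circ\psi:B\to\mathbb{R}^E$. By Invariance of Domain its image is open, so $\psi(B)$ is an open neighborhood of $P$ in $\realR(\mathcal P)$.

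Now take a flex of $P$. It is a non-constant continuous curve $P^t$ in $\mreal^*(\mathcal P,P)\subset\realR(\mathcal P)$ starting at $P$. By continuity, for small $t$ it stays inside $\psi(B)$, so $P^t=\psi(x_t)$ with $x_t\in B$. Since $\bs\ell(P^t)=\bs\ell(P)$ and $\bs\ell\circ\psi=\mathrm{id}$, we get $x_t=\bs\ell(P)$, hence $P^t=P$ for all small $t$. If one only assumes a flex is non-constant somewhere on $[0,1]$, apply the argument at $t_0=\sup\{t: P^t=P\}$: the same neighborhood forces the curve to equal $P$ slightly beyond $t_0$, a contradiction.

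The main obstacle is exactly the gap between ``locally injective'' and ``realizes the edge lengths'' (the compatibility $\bs\ell\circ\psi=\mathrm{id}$). Injectivity alone would not suffice: a flexible $P$ still admits injective continuous maps into the manifold $\realR(\mathcal P)$, for instance the inverse of a chart. So I would make this compatibility explicit as part of the meaning of the statement, inherited from the definition preceding \Cref{thm:rigidity-and-edge-length-perturbations}, and keep the rest as a direct invocation of that theorem.
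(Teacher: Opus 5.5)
Your proposal is correct and follows the paper, which presents the corollary simply as the contrapositive of \Cref{thm:rigidity-and-edge-length-perturbations}. You are also right that the compatibility $\bs\ell\circ\psi=\mathrm{id}$ near $\bs\ell(P)$ must be read into the statement: taken literally it fails, as your chart-inverse example shows, and the final step of the theorem's proof tacitly relies on it.

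There is one small slip in your self-contained variant. Take $t_1=\inf\{t : P^t\neq P\}$ instead of $\sup\{t : P^t=P\}$, because the latter can equal $1$ for a curve that returns to $P$. Alternatively, note that $\psi(B)\cap\mreal^*(\mathcal P,P)=\{P\}$ makes $P$ isolated in $\mreal^*(\mathcal P,P)$, so every path starting at $P$ is constant.
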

Note that the local surjectivity of $\bs\ell$ is a strictly weaker condition than the existence of a locally continuous map $\psi$ that is asserted in \Cref{thm:rigidity-and-edge-length-perturbations}. 
To explain that the condition is strictly weaker, let us consider a counterexample. 
\begin{remark}
\label{res:square_root_branching}
Consider $f:\mathbb{R}^2\rightarrow \mathbb{R}^2$ with $(x,y)\mapsto (x^2-y^2,\,2xy)$. This mimics the holomorphic function $\mathbb{C}\rightarrow \mathbb{C}$ that maps $z\mapsto z^2$ with values over $\mathbb{R}$. The map $f$ is continuous and, since over $\mathbb{C}$ square roots do exist, it is also locally surjective at the point $(0,0)$. 
However, the square root function is set-valued, so there exists no (locally) continuous square root function over $\mathbb{C}$ due to the branching at $(0,0)$.
\end{remark}

However, it may still be possible that the local surjectivity of $\bs\ell$ is sufficient for the statement of \Cref{thm:rigidity-and-edge-length-perturbations}. With the example of the square root in mind, we may need to show that the fiber size of $\bs\ell$ is constant outside of the realization $P$. 

The correspondence between the edge length realization space of a polyhedron and its rigidity properties is fascinating. Even though \Cref{prop:inf-rigidity-edge-perturbations} and \Cref{thm:rigidity-and-edge-length-perturbations} almost paint a complete picture of this correspondence, we were unable to resolve the case where the polyhedron is rigid, but not first-order rigid. In our experiments (\cf\ \Cref{section:dodec-edge-length-perturbations}), we find that this case seems to have particularly interesting algebraic and topological properties. For that reason, we formulate the following two questions, which we hope will inspire future research in this topic. 

\begin{question}
\label{conj:edge_length_perturbations_implies_rigid}
\label{conj:edge_length_perturbations}
    If all edge length perturbations of a convex polyhedron $P$ are realizable, is $P$ then necessarily rigid? Conversely, does every rigid realization~of a polyhedron have all edge length perturbations?
\end{question}


\tempnewpage

\section{Certifying Second-order rigidity}
\label{section:second-order-rigidity-check}
\label{sec:so_certificate}

As laid out in \Cref{sec:second-order-theory}, any first-order flex of a polytope 
that has the chance to extend to a continuous motion must necessarily not be blocked by an equilibrium stress. 
That is, according to \Cref{res:sotest}, there exists a first-order flex $\dot P$ so that for all equilibrium stresses $\bs \xi$ we have $\bs \xi\T
\mathcal{R}(\dot{P}) \dot{P}=0$.
Conversely, if for each $\dot{P}$ there exists a $\bs \xi$ such that $\bs \xi\T
\mathcal{R}(\dot{P}) \dot{P}\neq 0$, then the polytope is 
second-order rigid, which guarantees that it is rigid (\cf\ \Cref{res:second_order_rigid_impl_rigid} and \ref{res:sotest}). This criterion can be checked by parameterizing the first-order flex space via a basis
${\dot{P}}=({\dot{P}}_1,\dots,{\dot{P}}_r)$ with variables $\lambda=(\lambda_1,\dots, \lambda_r)$ and the equilibrium stress space via a basis $\bs\xi=({\bs \xi}_1,\dots,{\bs \xi}_s)$ with variables $\mu=(\mu_1, \dots, \mu_s)$. 
This turns the second-order condition \eqref{block} into the cubic polynomial

\[E_{{\dot{P}},\,\bs \xi}(\lambda,\mu)\,=\,\left(\sum_{j=1}^s \mu_j{\bs \xi}_j\right) \cdot \mathcal{R}\left(\sum_{i=1}^r \lambda_i{\dot{P}}_i\right)\cdot \left(\sum_{i=1}^r \lambda_i{\dot{P}}_i\right).\]

More precisely, the homogeneous polynomial $E_{\dot{P},\,\bs\xi}$ is linear in the variables $\mu$ and quadratic in the variables $\lambda$. For asserting second-order rigidity, it suffices to show that every first-order flex has a blocking stress according to \Cref{res:sotest}. In other words, second-order rigidity can be checked by finding a $\mu$ for each $\lambda$ 
so that $E_{{\dot{P}},\,\bs \xi}(\lambda,\mu)\neq 0$.
By expanding $E_{\dot{P},\,{\bs\xi}}(\lambda,\mu)$ in terms of $\mu_j$, we obtain 
\begin{eqnarray}
\label{eqn:stress_polynomial_Qi}
E_{{\dot{P}},\,\bs \xi}(\lambda,\mu)\,=\,\sum_{j=1}^s \mu_j \cdot Q_j(\lambda)
\end{eqnarray}
for quadratic forms $Q_j\in \mathbb{R}_2[\lambda_1,\dots,\lambda_r]$.

The crucial observation is the following: If any of the $Q_j$ is non-zero for a given $\lambda$, then we can find $\mu$ such that $\sum_{j=1}^s\mu_j\cdot {\bs\xi}_j$ blocks $\sum_{i=1}^r\lambda_i\cdot{\dot{P}}_i$. 
Therefore, the first-order flexes that are not blocked by any stresses can be parametrized by the solutions of the 
polynomial system $Q_1=\dots= Q_s=0$, which form an algebraic set. 
This system can also be represented by the gradient of $E_{\dot{P},{\bs\xi}}$ with respect to $\mu$ to obtain $\nabla_\mu E_{\dot{P},{\bs\xi}}(\lambda)=0$.

We will later see that the zero-dimensionality of the algebraic set $\mathcal{V}(Q_1,\dots,Q_s)$ is sufficient for second-order rigidity. As a criterion for that, we formulate the following. It is a standard result in computational algebraic geometry and can be found in textbooks such as \cite[Theorem 5.3.6]{cox-little-oshea}. 
    \begin{lemma}\label{gbasis}
        Fix a field 
        $\Bbbk$ and let $\mathcal{I}=\langle f_1,...,f_s\rangle$ be an ideal in $\Bbbk[x_1,..,x_r]$. Let $\mathcal{G}$ be a Gröbner basis of $\mathcal{I}$. Then, $\mathcal{I}$ is zero dimensional if and only if for every $i\in\{1,...,r\}$ some power of $x_i$ is the leading monomial of an element of $\mathcal{G}$.
    \end{lemma}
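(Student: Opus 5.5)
The plan is to prove both directions by passing through the finiteness criterion for zero-dimensional ideals: $\mathcal I$ is zero dimensional if and only if the quotient $\Bbbk[x_1,\dots,x_r]/\mathcal I$ is a finite-dimensional $\Bbbk$-vector space. For the passage between this algebra and the Gröbner basis $\mathcal G$, the tool is Macaulay's theorem. It says that the standard monomials, i.e.\ the monomials not lying in the leading-term ideal $\langle \mathrm{LT}(\mathcal I)\rangle = \langle \mathrm{LM}(g) : g\in\mathcal G\rangle$, project to a $\Bbbk$-basis of the quotient. Granting this (it follows from the division algorithm together with the defining property of a Gröbner basis), the statement becomes purely combinatorial: we must show that there are only finitely many standard monomials if and only if every variable has a pure power among the leading monomials of $\mathcal G$.

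For the direction ``$\Leftarrow$'', suppose $x_i^{m_i}=\mathrm{LM}(g_i)$ for some $g_i\in\mathcal G$ and each $i$. Any monomial $x^\alpha$ with some $\alpha_i\ge m_i$ is divisible by $x_i^{m_i}$, so it lies in the leading-term ideal. Hence every standard monomial satisfies $\alpha_i<m_i$ for all $i$, which leaves at most $\prod_i m_i$ of them. The quotient is therefore finite-dimensional.

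For the direction ``$\Rightarrow$'', suppose the quotient is finite-dimensional and fix $i$. The classes of $1,x_i,x_i^2,\dots$ are then linearly dependent, so some nonzero univariate polynomial $h(x_i)$ lies in $\mathcal I$. Its leading monomial is a pure power $x_i^m$ (any monomial order restricts to the degree order on powers of one variable), and $x_i^m\in\langle\mathrm{LT}(\mathcal I)\rangle$. Since $\mathcal G$ is a Gröbner basis, $x_i^m$ is divisible by $\mathrm{LM}(g)$ for some $g\in\mathcal G$. A divisor of a pure power $x_i^m$ is itself a pure power $x_i^{m'}$, and it is not $1$ because $\mathcal I$ is proper. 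So a power of $x_i$ is the leading monomial of an element of $\mathcal G$.

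The main obstacle is definitional. Zero dimensionality, in the sense used later, means that $\mathcal V(\mathcal I)$ is finite over the algebraic closure $\overline{\Bbbk}$, and its equivalence with finite-dimensionality of the quotient is the only nontrivial input. One direction is elementary: if each $x_i$ satisfies a univariate polynomial in $\mathcal I$, then each coordinate of a point of $\mathcal V(\mathcal I)$ takes finitely many values. The converse uses the Nullstellensatz: for a finite variety, $\prod_{a}(x_i-a_i)$, taken over the finitely many $i$-th coordinates $a_i$ of its points, vanishes on $\mathcal V(\mathcal I)$, so some power of it lies in $\mathcal I\otimes\overline{\Bbbk}$. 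A univariate element of $\mathcal I\otimes\overline{\Bbbk}$ then descends to one over $\Bbbk$, because Gröbner bases and quotient dimensions are unchanged under field extension. I would cite this equivalence from \cite{cox-little-oshea} rather than reprove it.
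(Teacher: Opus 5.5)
Your proof is correct and is essentially the standard argument behind the Finiteness Theorem in \cite{cox-little-oshea} (Theorem 5.3.6), which the paper cites instead of proving. The ingredients match: a Macaulay basis of standard monomials, bounded exponents for ``$\Leftarrow$'', a univariate element of $\mathcal I$ for ``$\Rightarrow$'', and the Nullstellensatz linking to finiteness of $\mathcal V(\mathcal I)$ over $\overline{\Bbbk}$, with your field-extension remark covering non-algebraically-closed $\Bbbk$. One harmless quibble: you need not rule out $\mathrm{LM}(g)=1$ via properness, which your variety-based definition does not guarantee since the unit ideal has empty variety, because $1=x_i^0$ already counts as a power of $x_i$.
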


Of course, there are also other methods based on Gröbner bases to verify that an ideal is zero-dimensional; however, for our purposes, \Cref{gbasis} proves to be sufficient. If the ideal $\mathcal{I}=\langle Q_1,\dots,Q_s \rangle$ is zero-dimensional, then the algebraic set $\mathcal{V}(\mathcal{I})$ contains only finitely many points \cite[Proposition 1.7]{hartshorne}. Since the quadratic forms $Q_i$ are homogeneous, it holds that \[\left(Q_1(c\cdot \lambda^*),\,\dots,\,Q_s(c\cdot \lambda^*)\right)=\left(c^2\cdot Q_1(\lambda^*),\,\dots,\,c^2\cdot Q_s(\lambda^*)\right)= 0\] 
for any $c\in \mathbb{C}$ and any $\lambda^*\in \mathcal{V}(\mathcal{I})$. In other words, all scalar multiples $c\cdot\lambda^*$ must also lie in $\mathcal{V}(\mathcal{I})$. The zero-dimensionality of $\mathcal{V}(\mathcal{I})$ is therefore equivalent to $\mathcal{V}(\mathcal{I})=\{0\}$.
In our context, this translates to the non-existence of non-blocked flexes. We summarize our observations in the following:
\begin{proposition}
\label{prop:stress-energy-condition}
    If the homogeneous polynomial system 
    $\nabla_\mu E_{\dot{P}, \bs \xi} (\lambda) = 0$ defined in \eqref{eqn:stress_polynomial_Qi} only has $\lambda_1=\dots=\lambda_r = 0$ as a real solution, then all first-order flexes are blocked by equilibrium stresses. Conversely, any flex that is not blocked by an equilibrium stress corresponds to a non-trivial, real zero of $\nabla_\mu E_{\dot{P}, \bs \xi} (\lambda)=0$.
\end{proposition}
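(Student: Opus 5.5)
The plan is to reduce the statement to linear algebra in the stress variables $\mu$. The first step is to write an arbitrary first-order motion in terms of the chosen basis. Strictly speaking, $\dot P_1,\dots,\dot P_r$ represent a basis of the quotient $\ker\mathcal R(P)/\mathcal T_P$. I therefore fix representatives $\dot P_i\in\ker\mathcal R(P)$ spanning a complement $W$ of $\mathcal T_P$, so that $\ker\mathcal R(P)=W\oplus\mathcal T_P$.

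The second step is to check that the pairing is blind to trivial motions. For every stress $\bs\xi$, the quantity $\bs\xi\T\mathcal R(\dot P)\dot P$ depends only on the class of $\dot P$ modulo $\mathcal T_P$. I would argue this through the proof of \cref{res:sotest}. If $\dot T\in\mathcal T_P$ comes from the infinitesimal isometry $\dot p=S p+t$, then composing the trivial motion with $\dot P$ shows that $\mathcal R(\dot P+\dot T)(\dot P+\dot T)-\mathcal R(\dot P)\dot P$ lies in the image of $\mathcal R(P)$. Since stresses annihilate $\operatorname{im}\mathcal R(P)$, the pairing is unchanged. This is the step I expect to be the main obstacle. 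The obstruction is the bilinear cross term $2\mathcal R(\dot T)\dot P$ together with $\mathcal R(\dot T)\dot T$, and one must show that both lie in $\operatorname{im}\mathcal R(P)$. I would verify this directly from \eqref{eq1} and \eqref{eq2}. For the cross term I would take $\ddot p=S\dot p$ and an $\ddot a_\sigma$ built from $S$ and $\dot a_\sigma$, analogously to the extension argument in the proof of \cref{res:trivial_dim}. For the pure trivial term I would take $\ddot p=S^2p+St$ with the matching $\ddot a$. Alternatively, one can simply restrict attention to flexes written in the form $\sum_i\lambda_i\dot P_i$, which is how the parametrization in the preceding text is set up.

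With this in hand, every first-order flex can be taken of the form $\dot P=\sum_i\lambda_i\dot P_i$ with $\lambda\in\RR^r\setminus\{0\}$. Nonzero $\lambda$ means precisely that the flex is non-trivial, because $W\cap\mathcal T_P=0$. By \eqref{eqn:stress_polynomial_Qi}, the flex $\dot P$ has a blocking stress if and only if there is $\mu\in\RR^s$ with $\sum_j\mu_jQ_j(\lambda)\neq0$. That expression is a linear form in $\mu$ whose coefficient vector is $\nabla_\mu E_{\dot P,\bs\xi}(\lambda)=(Q_1(\lambda),\dots,Q_s(\lambda))$. A linear form is nonzero for some $\mu$ exactly when its coefficient vector is nonzero; for instance, one can take $\mu=(Q_1(\lambda),\dots,Q_s(\lambda))$, which gives $\sum_jQ_j(\lambda)^2>0$.

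Both directions then follow at once. If the only real zero is $\lambda=0$, every non-trivial flex, having $\lambda\ne0$, has $\nabla_\mu E(\lambda)\ne0$ and is therefore blocked. Conversely, an unblocked flex $\sum\lambda_i\dot P_i$ has every $Q_j(\lambda)=0$ with $\lambda\neq0$, so it gives a non-trivial real zero.
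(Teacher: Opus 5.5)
Your proof is correct. Its last two paragraphs are exactly the paper's reasoning. The paper gives no separate proof, only the preceding observation that $E_{\dot P,\bs\xi}=\sum_j\mu_jQ_j(\lambda)$ is linear in $\mu$, so some stress blocks $\sum_i\lambda_i\dot P_i$ if and only if some $Q_j(\lambda)\neq 0$.

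What you add is the invariance of $\bs\xi\T\mathcal R(\dot P)\dot P$ under $\dot P\mapsto\dot P+\dot T$ with $\dot T\in\mathcal T_P$. The paper assumes this silently, by identifying first-order flexes with nonzero $\lambda$. The step is genuinely needed for the statement as phrased, since a flex may carry a trivial component. It is needed again when the proposition is fed into \cref{res:sotest} for the dodecahedron.

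Your sketch of this step is sound:
\begin{itemize}
\item $\mathcal R(X)Y$ is symmetric bilinear.
\item $\mathcal R(\dot T)\dot T\in\operatorname{im}\mathcal R(P)$, because $\dot T$ is the velocity of an honest curve of isometries.
\item For the cross term, take $\ddot p=S\dot p$ and $\ddot a_\sigma=S\dot a_\sigma-\langle t,\dot a_\sigma\rangle a_\sigma-\langle t,a_\sigma\rangle\dot a_\sigma$. Using $\langle p_i,a_\sigma\rangle=1$ and \eqref{eq:flex_at_vertex_face}, this gives $\mathcal R(P)\ddot P=-\mathcal R(\dot T)\dot P$.
\end{itemize}

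Note the $t$-dependent terms in $\ddot a_\sigma$. Because of the normalization $\langle p_i,a_\sigma\rangle=1$, translations act nontrivially on normals: the trivial motion has $\dot a_\sigma=Sa_\sigma-\langle t,a_\sigma\rangle a_\sigma$. So an $\ddot a_\sigma$ built from $S$ and $\dot a_\sigma$ alone works only when $t=0$.

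A shorter route handles both terms at once. Check directly from \eqref{eq:stress_at_vertex} and \eqref{eq:stress_at_face} that $\bs\xi\T\mathcal R(\dot T)\dot P=0$ for every first-order motion $\dot P$, including $\dot P=\dot T$. The one extra fact needed is $\sum_{i\sim\sigma}\eta_{i\sigma}=0$, obtained by pairing \eqref{eq:stress_at_face} with $a_\sigma$.
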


The approach based on Gröbner bases described in this section is sufficient to decide the second-order rigidity of all polytopes tested in \cref{sec:experimental_results} (\cf\ \Cref{tab:comparison} and \cref{tab:catalan-comparison}). 
Still, 
our result using Gröbner bases is clearly not necessary for second-order rigidity, as we only determine the emptiness of $\mathcal{V}(\mathcal{I})$ over $\mathbb{C}$. Therefore, it is possible that $\mathcal{V}(\mathcal{I})=\{0\}$ over the real numbers, even when we find non-trivial solutions over $\mathbb{C}$. A simple counterexample is given by the equation $x^2+y^2=0$, which only has $x=y=0$ as a real solution, but has the line $x=iy$ as a complex solution. A weaker certificate is given by the \Def{Euclidean distance degree problem} \cite{euclideandistancedegree} for a generic point $p\in \mathbb{R}^r$, which guarantees finitely many solutions on all connected components of $\mathcal{V}_\mathbb{R}(\mathcal{I})$ (\cf\ \cite{deformationpaths}). By appending the equation $\sum_{i=1}^r\lambda_i^2-1$ to the ideal $\mathcal{I}$ which describes the unit hypersphere, we guarantee that this approach only finds non-trivial solutions. We also do not lose any solutions beyond the origin due to the invariance of solutions under scalar multiplication.

Analogous to second-order rigidity, we can decide prestress stability (\cf\ \Cref{res:prestress}). For that, we need to decide if there exists a $\mu\in \mathbb{R}^s$ defining a 
stress ${\bs\xi}=\sum_{j=1}^s \mu_j\cdot {\bs\xi}_j$ such that $E_{\dot{P},\,{\bs\xi}}(\lambda,\mu)\not= 0$ for all $\lambda$. This is a standard semidefinite programming (SDP) problem: we need to show that $\sum_{j=1}^s \mu_j \cdot Q_j(\lambda)$ is positive definite for some choice of $\mu$. As this is a homogeneous quadratic form, we can alternatively check whether the matrix
$$\sum_{j=1}^s \mu_j \cdot \left(\frac{\partial^2}{\partial \lambda_k\lambda_\ell}Q_j(\lambda)\right)_{k,\ell=1}^r$$
is positive definite for some choice of $\mu$. 

\tempnewpage

\section{Second-order rigidity of the regular dodecahedron}
\label{sec:so_rigidity_dod}

Among the Platonic solids, there is only one for which the question of rigidity is not answered immediately.
The regular tetrahedron, octahedron and icosahedron are simplicial, hence rigid by Dehn's theorem \cite{dehnstheorem}. 
The cube is the classical example of~a~flexible polytope.
Solely the regular dodecahedron remains to be understood and initially eluded direct computation.

Symbolic computations reveal a 5-dimensional space of first-order flexes and a~5-dimensional space of stresses.\footnote{The specific embedding and orderings of vertices, edges and faces are provided in \cref{appendix:comp-data-dodecahedron}.}
Note that these dimensions must be identical~due~to \cref{res:flex_eq_stress}.
It is tempting to somehow relate the dimensions of the flex and stress spaces to the 5-fold symmetries of the dodecahedron.
We have no evidence~for such a relation, and instead found rather mild counter-evidence in the case of the truncated~dode\-cahedron (see \cref{sec:selected_rigidity}).

Since the regular dodecahedron is not first-order rigid, it is a promising test case for the second-order certificate from \cref{sec:so_certificate}.
Concretely, \cref{prop:stress-energy-condition} asserts that it suffices to compute the zeros of a certain polynomial system. 
We~proceed~as follows:
\begin{enumerate}
    \setlength{\itemsep}{0.7ex}
    \item 
    Symbolically compute a basis of the space of first-order flexes $(\dot{P}_1,\dots,\dot{P}_5)$ and a basis of the space of equilibrium stresses $(\bs \xi_1,\dots, \bs\xi_5)$. 
    Using the~bases, we parametrize the spaces by variables $\lambda_1,\dots,\lambda_5$ and $\mu_1,\dots,\mu_5$.
    
    \item From the stress energy $E_{\dot{P},\bs\xi}(\lambda, \mu)$, we extract the polynomial system $$\nabla_\mu E_{\dot{P},\bs\xi}(\lambda)=(Q_1(\lambda),\,\dots,\, Q_5(\lambda))=0$$ consisting of quadratic forms $Q_i$. 
    
    \item Use the equivalence in \cref{gbasis} to determine whether the system~has~fi\-nitely many (real) zeros.
    This is the case for the regular dodecahedron.
    
    \item Use \Cref{prop:stress-energy-condition} to infer second-order rigidity.
\end{enumerate}

    The Gröbner basis $\mathcal{G}$ for the ideal $\mathcal{I}=\langle Q_1,\,\dots,\,Q_5\rangle$ in the lexicographic ordering consists of 31 polynomials and is provided in the Supplementary Material \cite{zenodo_suppMaterial}.
    For more specifics on the relevant \textsc{Mathematica} files, see \cref{appendix:comp-data-dodecahedron}.
    With the help of this Gröbner basis, we obtain the following results:

    \begin{theorem}\label{dodecahedronrig}
        The regular dodecahedron is rigid. 
    \end{theorem}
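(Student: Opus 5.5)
The plan is to deduce rigidity from second-order rigidity. By \cref{res:second_order_rigid_impl_rigid} it suffices to show that the regular dodecahedron $P$ is second-order rigid. By \cref{res:sotest} this means showing that every first-order flex $\dot P$ has a blocking stress. The reduction to a finite algebraic check goes through \cref{prop:stress-energy-condition}.

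First, I fix the exact embedding of $P$ with coordinates in $\mathbb{Q}(\sqrt5)$, with the origin at the center so that all $\<p_i,a_\sigma\>=1$. I then compute the rigidity matrix $\mathcal R(P)$, which has size $90\times 96$, exactly over this field.

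From it I compute a basis of $\ker\mathcal R(P)$. I quotient out the $6$-dimensional space $\mathcal T_P$ of \cref{res:trivial_dim}, realised concretely by pinning a vertex, an edge direction and a face plane as in \cref{sec:second-order-theory}. This yields representatives $\dot P_1,\dots,\dot P_5$ of the first-order flex space. I also compute a basis $\bs\xi_1,\dots,\bs\xi_5$ of $\ker\mathcal R(P)\T$. The equality of these two dimensions is a consistency check via \cref{res:flex_eq_stress}.

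One subtle point needs care: a second-order flex is defined via an arbitrary non-trivial first-order motion, not a normalised representative. I therefore check that the quadratic forms $Q_j(\lambda)=\bs\xi_j\T\mathcal R(\dot P)\dot P$ are insensitive to adding trivial motions. Concretely, for a trivial $\dot T$ and any stress $\bs\xi$, the value of $\bs\xi\T\mathcal R(\dot P+\dot T)(\dot P+\dot T)$ equals $\bs\xi\T\mathcal R(\dot P)\dot P$. This holds because $\mathcal R(\dot T)\dot T$ and the cross terms are the second derivatives of the constraints along a rigid-motion curve, which can be absorbed into the image of $\mathcal R(P)$, and that image is annihilated by the stresses. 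With this in hand, the $5$-parameter family $\lambda\mapsto\sum\lambda_i\dot P_i$ covers all first-order flexes up to trivial motions.

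Next I form $E_{\dot P,\bs\xi}(\lambda,\mu)=\sum_j\mu_jQ_j(\lambda)$ and extract the five quadrics $Q_1,\dots,Q_5\in\mathbb{Q}(\sqrt5)[\lambda_1,\dots,\lambda_5]$. I then compute a Gröbner basis of $\mathcal I=\<Q_1,\dots,Q_5\>$, for instance in lex order. Using \cref{gbasis}, I verify that for each $i$ some pure power $\lambda_i^{m_i}$ is a leading monomial. This gives that $\mathcal I$ is zero-dimensional. By homogeneity, $\mathcal V(\mathcal I)=\{0\}$ over $\mathbb{C}$, and hence over $\mathbb{R}$. \Cref{prop:stress-energy-condition} then shows every first-order flex is blocked, so $P$ is second-order rigid and therefore rigid.

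The main obstacle is the Gröbner basis computation itself. It must be carried out in exact arithmetic over $\mathbb{Q}(\sqrt5)$, and a bad choice of flex and stress bases may cause coefficient blow-up. To control this, I would first try choosing bases adapted to the icosahedral symmetry, or sparse bases obtained by row reduction. If the symbolic computation still stalls, the fallback is to adjoin $\sum\lambda_i^2-1$ to $\mathcal I$ and certify the absence of real solutions instead. This is weaker over $\mathbb{C}$ but sufficient for the real statement.
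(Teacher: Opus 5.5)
Your proposal is correct and is essentially the paper's argument. The paper computes the lex Gröbner basis of $\langle Q_1,\dots,Q_5\rangle$ and finds $\lambda_1^2,\lambda_2^3,\lambda_3^3,\lambda_4^5,\lambda_5^6$ among its leading monomials. It then chains \cref{gbasis}, homogeneity, \cref{prop:stress-energy-condition}, \cref{res:sotest} and \cref{res:second_order} exactly as you do.

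Your extra check that ${\bs\xi}\T\mathcal R(\dot P)\dot P$ is unchanged under $\dot P\mapsto\dot P+\dot T$ is a point the paper leaves implicit, and it does hold. Your ``rigid-motion curve'' argument only covers the $\mathcal R(\dot T)\dot T$ term. The cross term needs a separate expansion using both equilibrium conditions, the first-order condition on $\dot P$, and $\sum_{i\sim\sigma}\eta_{i\sigma}=0$; the last identity follows by pairing the facet equilibrium with $a_\sigma$.
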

    \begin{proof}
        The monomials $\lambda_1^2$, $\lambda_2^3$, $\lambda_3^3$, $\lambda_4^5$,  $\lambda_5^6$, appear as leading monomials in the Gröbner basis $\mathcal{G}$ for the ideal $\mathcal{I}=\langle Q_1,\,\dots,\, Q_5\rangle$.
        \Cref{gbasis} therefore implies that $\mathcal{I}$ is zero-dimensional. 
        Since $\mathcal{I}$ is a homogeneous ideal, the only solution is $\lambda_1=\lambda_2=\cdots=\lambda_5=0$. 
        By \cref{prop:stress-energy-condition}, for any non-zero linear combination of the five first-order flexes, we can find a stress so that inequality \eqref{block} holds. 
        By \Cref{res:sotest}, $P$ is second-order rigid. 
        By \cref{res:second-order-implies-rigid}, $P$ then is rigid. 
    \end{proof}

Having understood the rigidity properties of the Platonic solids, a natural move is to extend the search to the next best well-documented classes of polytopes -- the Archimedean and Catalan solids.
A detailed account of the findings is given~in~\cref{sec:archimedean} (see also \cref{tab:comparison} and \cref{tab:catalan-comparison}).

Among the 31 polytopes checked, only three are not first-order rigid, but also not predictably flexible (\cf\ \cref{sec:dimension_estimates}): the regular dodecahedron, the truncated icosahedron and the truncated dodecahedron (\cf\ \cref{fig:Dod_TIco_TDod}).
The procedure above was applied to all of them, and our findings are documented in \cref{sec:selected_results}.

\tempnewpage

\section{Computational results: Platonic, Archimedean and Catalan solids}
\label{sec:experimental_results}
\label{sec:archimedean}

We apply our techniques to the Platonic, Archimedean, and Catalan~solids.\nls
For these classes, combinatorial data and algebraic vertex and normal coordinates are readily available.%
%
%
\footnote{The coordinates, orderings of vertices, edges and facets are taken from the \textsc{Mathematica} polyhedral database. The computations are provided in the Supplementary Material \cite{zenodo_suppMaterial}.}
The results are shown in \cref{tab:comparison} (for the Platonic and Archime\-dean solids) and \cref{tab:catalan-comparison} (for the Catalan solids).
An explanation of the content of the tables is given in \cref{sec:explain_table}.
A key takeaway is that even simple examples can show a wide range of rigidity behaviors.

We were able to decide the rigidity of the tested polytopes in all but one case. 
In particular, we determined the previously unknown rigidity properties of the \textit{regular dodecahedron}.
Solely the rigidity of the \emph{truncated dodecahedron} cannot be decided by our techniques (marked with \Ques\ in \cref{tab:comparison}). 
Solving this question appears to~re\-quire techniques of even higher order and can thus serve as a simple test case for future theoretical and computational developments.
See \cref{sec:selected_results,sec:rigidity_comments,sec:selected_flexibility}~for a discussion of the results.

\subsection{Explanation of table contents}
\label{sec:explain_table}

Each table cell contains the name and~a~visualization of the polytope in question.
The parentheses contain the following data from left to right:

\begin{enumerate}
\setlength{\itemsep}{1ex}

    \item Whether the polytope is \textit{first-order rigid} (\yes) or \textit{first-order flexible} (\no). This is computed \textit{symbolically} based on the description in \cref{sec:first_order}.

    \item Whether the polytope is \textit{prestress stable} (\yes) or \textit{not} (\no). If not implied~by other symbolic first- or second-order computations, this is computed \textit{numerically} based on the description in \cref{sec:second_order} and using our novel techniques developed in \cref{section:second-order-rigidity-check}, in this case reducing to a semidefinite program.\nls
    We use \yesBut\kern-1pt/\kern1pt\noBut\ to indicate numerical results.
    A dual symbolic approach is described in \cite[Lemma 6.3.2]{Lovasz2003SDP}, but we do not currently have a way to produce such a certificate for our setting.
    
    \item Whether the polytope is \textit{second-order rigid} (\yes) or \textit{second-order flexible}~(\no). This is computed \textit{symbolically} based on the description in \cref{sec:second_order} and using our new techniques developed in \cref{section:second-order-rigidity-check}.
    
    \item Whether the polytope is \textit{rigid} (\yes) or \textit{flexible}~(\no). This is inferred from the previous symbolic computations, from the polytope's combinatorial data (see \Cref{sec:dimension_estimates}) or indicated as unknown (\Ques).
    
    \item The \textit{dimension of the first-order flex space} as introduced in \cref{sec:first_order}, which we compute \emph{symbolically}.
    In some cases we have a good theoretical understanding of how the dimension of the flex space comes about. 
    In other cases we can derive theoretical lower bounds on the dimension.
    %
    %
    If further explanations are available, this is indicated~by a symbol \SymOne, \SymTwo, \SymThree\ (see below), and further theoretical background is provided in \cref{sec:dimension_estimates}.
\end{enumerate}

The following symbols in the table indicate further explanations regarding their rigidity properties and flex space dimensions:
\begin{itemize}
\setlength{\itemsep}{1ex}

    \item[\SymTwo] This is a \textit{simplicial polytope} (\ie\ all faces are triangles). 
    Hence the polytope is first-order rigid by Dehn's rigidity theorem, as well as globally rigid by Cauchy's rigidity theorem (for details on the classical rigidity theorems see \cite[Remark 1.3]{himmelmannschulzewinter2025rigiditypolytopesedgelength} or \cite[Chapter 26+]{pak2010lectures}).
    
    \item[\SymThree] This polytope is a \emph{generic Minkowski sum}, and hence flexible according~to \cite[Section 4.1]{himmelmannschulzewinter2025rigiditypolytopesedgelength}.
    The dimension of the first-order flex space decomposes~according to~the two types of flexes that exist for such polytopes~(summand flexes and reorientation flexes; see \cref{sec:generic_Msum} for details).
    
    \item[\SymOne] This is a \textit{zonotope}, and therefore flexible according to \cite[Section 4.2]{himmelmannschulzewinter2025rigiditypolytopesedgelength}. See \cref{sec:zonotopes} for an explanation of the observed flex space dimensions.\nls If the zonotope is generic (marked by \SymOne\ \SymThree), the dimension of the first-order flex space can be computed exactly (\cf\ \cref{res:generic_zonotopes}).
\end{itemize}
%

\pagebreak
\vspace*{6mm}

\input{sec/table1}
\newpage


\subsection{Selected results}
\label{sec:selected_results}
\label{sec:selected_rigidity}

We highlight the following findings from \cref{tab:comparison} (the corresponding table cells are marked with \Star):

\begin{itemize}
\setlength{\itemsep}{0.7ex}
    \item The \Def{regular dodecahedron} and the \Def{truncated icosahedron} have identical~rigidity properties: they are neither first-order rigid (both have a 5-dimensional space of first-order flexes) nor prestress stable; yet, \emph{both are second-\mbox{order} rigid}. 
    In particular, they are rigid. 
    This was previously stated as an open question for the regular dodecahedron in
    \cite[Section 6.2]{himmelmannschulzewinter2025rigiditypolytopesedgelength}. 
    With these rigidity properties, both polytopes are remarkably natural geometric constraint systems which demonstrate that prestress stability and second-order rigidity are distinct concepts.
    Previous examples of this distinction were compa\-ratively ad hoc (\eg\ \cite[Figures 9b and 17a]{connelly1996second}).
    
    \item The \Def{truncated dodecahedron} is not first-order rigid (it~has~a 4-dimensional space of first-order flexes), not prestress stable and \emph{not~second-order rigid}.
 In fact, for all quadratic forms in \eqref{eqn:stress_polynomial_Qi} we verified symbolically that $Q_i=0$. 
 Second-order flexibility then followed via \cref{prop:stress-energy-condition}.
    Since at this time no 
    methods exist to test rigidity beyond the second order, we do not know whether the truncated dodecahedron is in fact rigid.
    If it is rigid, it would provide a remarkably natural geometric constraint system demonstrating that rigidity does not imply second-order rigidity. 
    Previous examples of~this distinction have been comparatively ad hoc (\eg\ \cite[Figure 22]{connelly1996second}). 
    If it is flexible, its flex would be fundamentally different from all polytope flexes encountered before (\cf\ \cite[Question 4.2]{himmelmannschulzewinter2025rigiditypolytopesedgelength}).
\end{itemize}

Numerical experiments suggest that the first-order flexibility of the three polytopes discussed above is sensitive to small changes in their geometry.
First, applying a generic linear transformation results in first-order rigid polytopes. This is in contrast to how all known flexible polytopes preserve their flexibility under linear transformations (\cf\ \cite[Question 4.3]{himmelmannschulzewinter2025rigiditypolytopesedgelength}).
Second, changing the height of truncation in the truncated variants results in first-order rigid polytopes. 
At this time, it is unclear why exactly the realizations with uniform edge lengths are the ones~that end up being first-order flexible.
In particular, these properties cannot be a consequence of the polytopes' symmetry alone.

\subsection{Comments on rigidity}
\label{sec:rigidity_comments}

Dehn's theorem \cite{dehnstheorem} states that simplicial polytopes (\ie\ where all faces are simplices) are first-order rigid.
\Cref{tab:comparison} and \cref{tab:catalan-comparison} contain plenty of non-simplicial polytopes that are first-order rigid, for example, the~trunca\-ted tetrahedron (\cref{tab:comparison}) or deltoidal icositetrahedron (\cref{tab:catalan-comparison}).
Currently no theo\-retical tools are available that would identify these as rigid a priori.
We therefore formulate the following question:

\begin{question}
    Can Dehn's theorem be extended to some classes of non-simplicial polytopes? 
    That is, can we formulate combinatorial/geometric
    %
    %
    criteria for the first-order rigidity of polytopes? 
\end{question}

Perhaps, polytopes whose only faces are triangles and 4-gons form a class simple enough to admit a complete characterization.
See also \cite[Question 4.4.]{himmelmannschulzewinter2025rigiditypolytopesedgelength}.

\subsection{Comments on flexibility}
\label{sec:selected_flexibility}
\label{sec:flexibility_comments}

Save the polytopes of \cref{sec:selected_rigidity}, we have partial explanations for the first-order flex space dimensions listed in \cref{tab:comparison} and \cref{tab:catalan-comparison}.
In all cases, these first-order flexes are explained by actual flexes.
We know~this both from theoretical considerations (see \cref{sec:dimension_estimates}) and the numerically generated flexes provided in the Supplementary Material \cite{zenodo_suppMaterial}.

Remarkably, the lower bounds on the dimensions of the first-order flex spaces obtained in \cref{sec:dimension_estimates} match the computationally obtained values in the tables exactly.
We can conclude that, at least locally at their respective realizations, we have a full understanding of the flexes of these polytopes.

\input{sec/table2}


\pagebreak

\section{Case study: the regular dodecahedron}
\label{sec:dodecahedron}

Due to its previous status as the guiding example,
we single out the dodecahedron once again for a closer inspection.
Exploring its realization space close to the regular dodecahedron helps us better understand the source of its exceptional behavior.
From that, we draw conclusions about the global rigidity of the polytope.

\subsection{Deformation paths}
\label{section:dodec-edge-length-perturbations}

By the Legendre-Steinitz theorem the reduced realization space $\realR(\mathcal{P})$ of the dodecahedron is a smooth $30$-dimensional manifold. 
The dodecahedron is rigid by \Cref{dodecahedronrig}, suggesting that these 30 degrees of freedom translate directly into changes in the lengths of its 30 edges.
Although we formulate the hope that all edge length perturbations of rigid polytopes are realizable in \Cref{conj:edge_length_perturbations}, even for the regular dodecahedron it is completely unclear whether this statement is true.

Due to the dodecahedron's high symmetry (all edges are the same), one might suggest to deform a single edge and, if this is possible, to conclude that arbitrary perturbations are possible.
In this section, we investigate the deformations of the regular dodecahedron induced by shrinking or extending a single edge. 
By approximating such deformation paths using methods explained in \Cref{sec:approx_continuous_motions}, we probe a neighborhood of this polytope $P$ in $\realR(\mathcal P)$. 
However, whether this implies that all edge length perturbations are possible remains unanswered.

For this experiment, we keep all but one edge length constant at $\ell=1$ throughout the deformation. Denote the non-constant edge length by $\ell^*(t) = 1+t$ with $t\in (-\delta,+\delta)$ for some $\delta>0$ such that $\lvert \delta \lvert~ < 1$. 
Sampling from the realization space intersected with the real algebraic set of prescribed edge lengths (see also \cite{deformationpaths}) reveals \textit{six} distinct arcs starting at the regular dodecahedron corresponding to the shrinking of a single edge and \textit{two} corresponding to its expansion. 

        \begin{figure}[ht]
            \centering
            \includegraphics[width=0.75\linewidth]{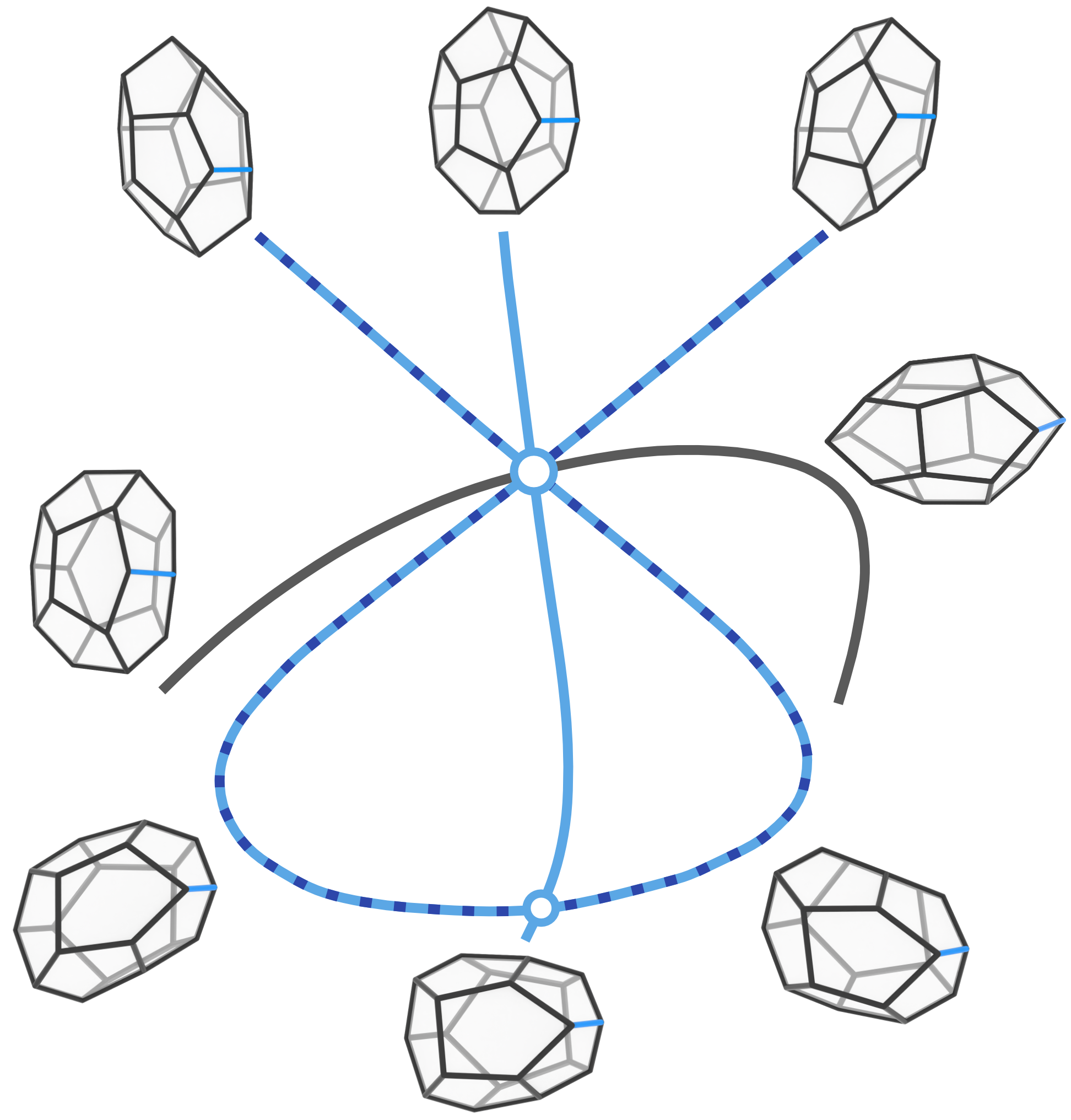}
            \caption{Realizations obtained by modifying the length of a single edge of the dodecahedron (central blue circle). Blue curves correspond to edge contractions, whereas gray curves correspond to edge expansions. On these curves, the eight depicted 
            realizations are located at $\ell^*(\pm 0.25)$. Four realizations are connected by a nodal cubic (dark blue dashes).}
            \label{fig:edge-contraction-deformation-space}
        \end{figure}

%
%
In \Cref{fig:edge-contraction-deformation-space}, we depict a random projection of the deformations induced by edge contractions 
(blue) and expansions (gray), alongside eight realizations at $\ell^{\star}(\pm 0.25)$.
Remarkably, it appears that some of these arcs rejoin smoothly. 
For instance, the gray arcs corresponding to an edge expansion have the same tangent in the regular dodecahedron, and so do suitable pairs of arcs of the blue curves. 
Moreover, the arcs corresponding to the four lower-symmetry realizations can be combined to form a nodal cubic curve (dark blue dashed curve).
It is conceivable that for these reasons \Cref{thm:rigidity-and-edge-length-perturbations} does not apply to the regular dodecahedron since there are multiple realizations with the same edge lengths arbitrarily close to this polytopes. 
Hence, it is possible that in this case there cannot exist a continuous, locally injective map $\psi:\mathbb{R}^E\rightarrow \realR(\mathcal{P})$ such that $\psi(\bs\ell(P))=P$ (\cf\ \Cref{res:square_root_branching}). 

In total, three curves are passing through the regular dodecahedron that are induced by length changes of a single edge. 
Numerically computing the derivative in the regular dodecahedron confirms this observation: though naturally, we expect that our numerical calculations are not exact, the respective sum of the tangent vectors is sufficiently close to zero. 
We attach an animation of the smooth deformation passing through the regular dodecahedron on the solid blue path from $\ell^*(-0.25)$ via $\ell^*(0)$ to $\ell^*(-0.25)$ in the Supplementary Material \cite{zenodo_suppMaterial} of this article.

Additionally, we find that in the case of edge contractions there are two types of deformations: those that react to the edge contraction by flattening horizontally (see \Cref{fig:edge-contraction-deformation-space}, top) and those that flatten vertically (see \Cref{fig:edge-contraction-deformation-space}, bottom). Both types contain one realization with two reflection symmetries, and two realizations with only one reflection symmetry that are mutual mirror images of each other. 
        Intriguingly, not all dodecahedra persist until the edge collapses to length zero. 
        The configurations in the bottom left and bottom right are confluent with the configuration in the bottom center at $\approx\ell^{\star}(-0.37194\dots)$, leaving only four configurations below this particular edge length. 
        All of them persist until $\ell^*(-1)=0$.

        Conversely, expanding an edge only produces two realizations with distinct reflection axes. 
        They are depicted on the left and right of \Cref{fig:edge-contraction-deformation-space}. 
        Despite the dodecahedron's high level of regularity and symmetry, there are more deformation paths for contracting an edge than for expanding it. This behavior is surprising and should be the object of further investigation.

\subsection{Global rigidity}
\label{sec:dodecahedron-global-rigidity}

The results of
\Cref{section:dodec-edge-length-perturbations} provide insight into the concept of global rigidity for convex polytopes.
Recall that a polytope is called \Def{globally rigid} if there exists no other realization of the polytope with the same edge lengths.
Clearly, the realizations shown in \cref{fig:edge-contraction-deformation-space} (either on the blue or on the gray curve) are equivalent, that is, have identical edge lengths. None of them is therefore~globally rigid.
This observation
leaves open the possibility of generic global rigidity. 

A polytope is called \Def{generically globally rigid} if every generic 
realization is globally rigid \cite{Connelly2005}. Once again, we refer to the notion of genericity introduced in \cite[Definition 5.3]{himmelmannschulzewinter2025rigiditypolytopesedgelength}.
First, we confirmed numerically that all of the equivalent realizations which are 
depicted in \cref{fig:edge-contraction-deformation-space} are first-order rigid. 
Using \cref{prop:inf-rigidity-edge-perturbations}, each of these realizations can therefore be perturbed to achieve the same edge lengths as a sufficiently close generic realization of the dodecahedron.
Hence, generic realizations are also not necessarily globally rigid.
Notably, the most concrete case remains open:

\begin{question}
    \label{q:global_rigid}
    Is the regular dodecahedron globally rigid? Conversely, does there exist a convex realization of the dodecahedron with all edges of the same length, besides the regular dodecahedron?
\end{question}

It would be surprising, though also not absurd, if other realizations with equal edge lengths exist. Consider, for example, the near miss shown in \cref{fig:dodecahedron_variation} (left, center), whose edge lengths are almost all equal.
An alternative realization, for which all edges have exactly the same length, is the ``flat dodecahedron'' shown in \cref{fig:planar-dodec} (right). It does not lie in $\real(\mathcal{P})$, since $\langle p_i,\,a_\sigma\rangle = 1$ for all $i\sim \sigma$.

\begin{figure}[h!]
    \centering
    \begin{minipage}[c]{0.575\textwidth}
    \includegraphics[width=\linewidth]{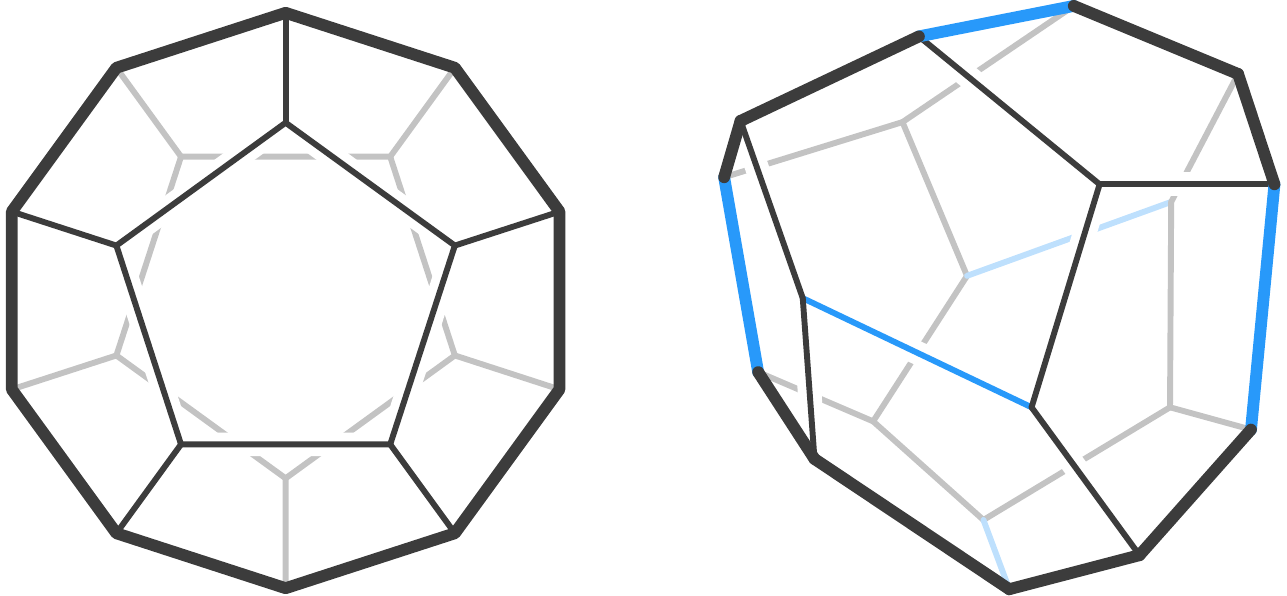}
    \end{minipage}\hspace*{12mm}
    \begin{minipage}[c]{0.32\textwidth}
    \includegraphics[width=\linewidth]{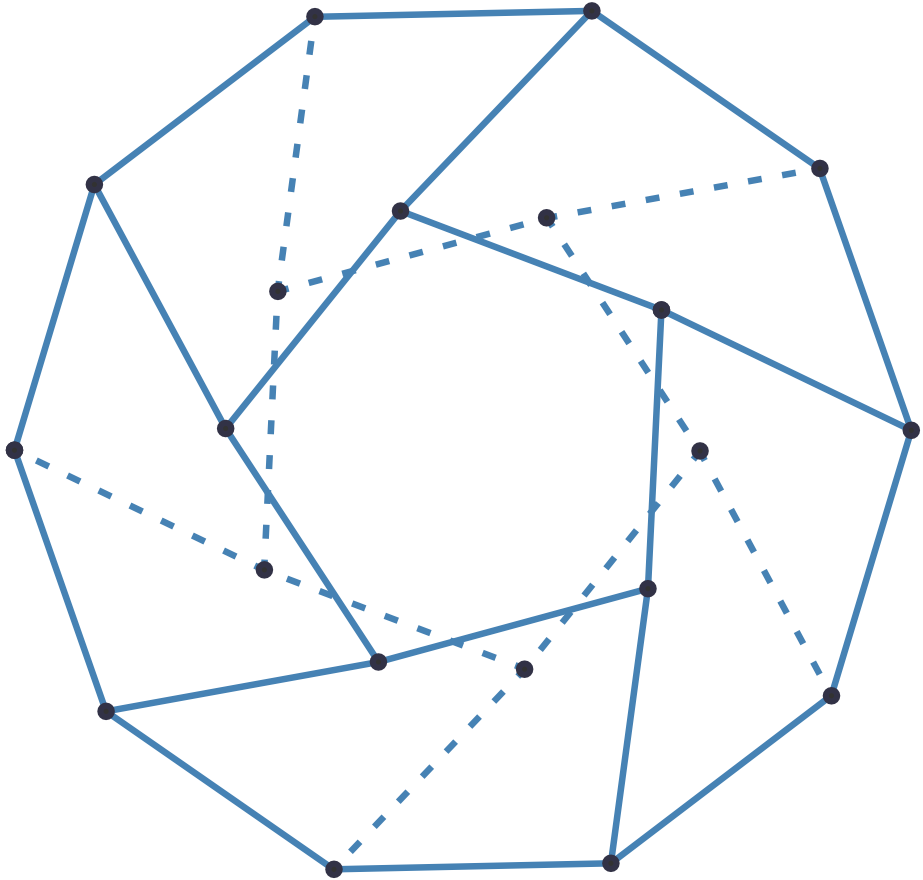}
    \end{minipage}
    \caption{The regular dodecahedron (left) and another realization, known as the ``Pyritohedron'', for which all edges have \emph{almost} the same length (center). The blue and black edges have slightly different lengths. Moreover, we depict a 2-dimensional drawing of the dodecahedral graph for which all edges have the same length (right). The flat dodecahedron is a reproduction and is courtesy to Bob Connelly and Simon Guest. }
    \label{fig:dodecahedron_variation}
    \label{fig:planar-dodec}
\end{figure}

\tempnewpage 

\section{Conclusion}
\label{sec:outlook}

In this article, we investigate deformations and develop a second-order rigidity theory for polytopes with edge length and coplanarity constraints.

\subsection{Main results}

We carefully develop a first-order and second-order theory for the rigidity of polytopes in \Cref{sec:first_order,sec:second-order-theory}. 
Using these tools, we demonstrate in \Cref{res:second_order} that second-order rigidity is sufficient for the rigidity of polytopes. 
Using equilibrium stresses, we subsequently develop an effective algorithm for checking second-order rigidity in \Cref{section:second-order-rigidity-check}. 
A well-developed second-order theory is important since it is right at the boundary of what is practically possible: analogous statements are no longer true for naive higher-order rigidity criteria (see \Cref{sec:higher_order_rigidity}). 

For 3-dimensional polytopes, we also investigate the topological properties of the realization space $\real(\mathcal{P})$ by considering the edge length perturbation space. This lets us generate novel conditions for rigidity (\eg\ \Cref{prop:inf-rigidity-edge-perturbations} and \Cref{thm:rigidity-and-edge-length-perturbations}). By applying the numerical curve-tracking scheme from \Cref{sec:approx_continuous_motions}, we generate animations of flexes and the curves induced by edge length modifications.

Using these tools, we characterize the rigidity properties of the five Platonic and 13 Archimedean solids in \Cref{tab:comparison}, and of the 13 Catalan solids in \Cref{tab:catalan-comparison}. Notably, we demonstrate that the regular dodecahedron and the truncated icosahedron  are second-order rigid, even though they are neither first-order rigid, nor prestress stable. Therefore, they represent examples of rigid polytopes, for which the first-order theory is insufficient. 
In \Cref{sec:so_rigidity_dod}, we explicitly demonstrate a strategy~for proving the second-order rigidity of the regular dodecahedron as a test case. Moreover, we investigate deformations induced by modifying the length of a single edge in \Cref{sec:dodecahedron}, which reveals a highly interesting local geometry and also serves as a counterexample to the generic global rigidity of polytopes.

\subsection{The Truncated Dodecahedron}

Among the 31 polytopes we considered, the truncated dodecahedron is the only one whose rigidity we are unable to determine. We are able to show that this polytope's space of first-order flexes is 4-dimensional, so it is not first-order rigid.
We also determine that it is not second-order rigid, since the associated system of homogeneous quadratic equations $Q_1=\dots=Q_4=0$ arising from \eqref{eqn:stress_polynomial_Qi} has a non-trivial solution. 
In fact, we 
find that all quadratic forms $Q_i$ are identically zero, and hence no first-order flex can be blocked by any stress. 
Still, with our current tools we are not able to distinguish whether it is flexible or rigid.

In \Cref{fig:TDod_flex_conc}, we depict samples from a numerically generated deformation of the truncated dodecahedron.
It is constructed from deforming along first-order flexes of high symmetry and subsequently restoring length and coplanarity constraints by gradient descent with reasonable numerical tolerances.
%
\begin{figure}[h!]
    \centering
    \includegraphics[width=0.9\linewidth]{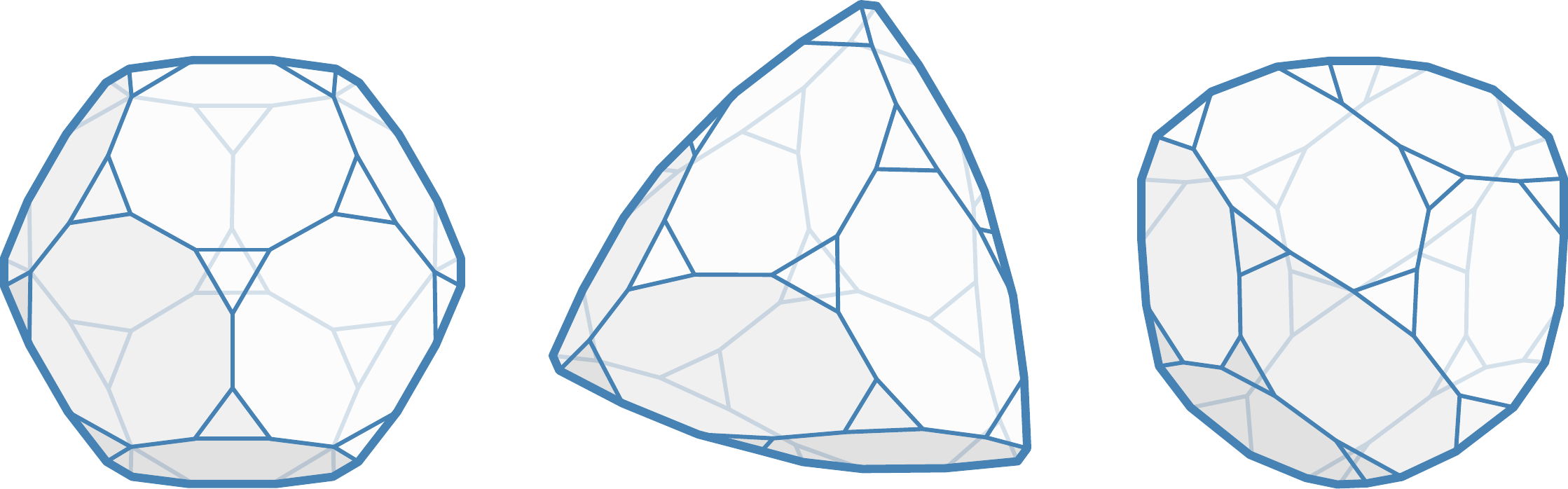}
    \caption{The Archimedean truncated dodecahedron (left) and two of its potential flexes (middle and right) constructed numerically.}
    \label{fig:TDod_flex_conc}
\end{figure}
However, when we apply a more robust numerical scheme (\cf\ \Cref{sec:approx_continuous_motions}) with a tolerance close to machine precision, all of the realizations from the flex visualized in \Cref{fig:TDod_flex_conc} converge back to the maximally symmetric truncated dodecahedron. Therefore, this symmetric deformation is likely not a true flex. 

\subsection{Higher-Order Rigidity}
\label{sec:higher_order_rigidity}
Initial experiments using the energy-based approach for higher-order rigidity of Gortler, Holmes-Cerfon and Theran \cite{gortler2025higherorderrigidityenergy} suggest that the truncated dodecahedron is instead third-order rigid. 
This result would help explain why we are able to find a deformation of the truncated dodecahedron in \Cref{fig:TDod_flex_conc}, which 
satisfies the edge length and facet planarity constraints with reasonably small numerical tolerance but does not hold up to more robust numerical tests. In \cite{gortler2025higherorderrigidityenergy}, the authors observe that the energy landscape around an $N$-th order rigid realization locally looks like the 
polynomial curve $y=x^{2N}$. This implies that, for larger $N$, one would have to deviate relatively far from the original realization to see a noticeable increase in the energy.

If the dimension of the space of first-order flexes is greater than one, there currently exists no effective and conclusive algorithm for checking higher-order rigidity, and we would have to solve infinitely many polynomial optimization problems. It is a problem for future research to apply hierarchies from the polynomial optimization literature (\eg\ Lasserre hierarchy) to limit the number of cases that need to be checked. Solving this issue would lead to an algorithmic framework which would be applicable to various geometric constraint systems across rigidity theory, and not just polytopes.

%% file: sec/table1.tex
\def\arraystretch{1.5} 

\begin{table}[h!]
\centering
\hspace*{-9mm}
\begin{tabular}{|clc|c|clc|}
\cline{1-3}\cline{5-7}

\makecell{\\[7mm]\SymTwo\hspace*{-5mm}} & \makecell{Tetrahedron\\[2.5mm] (\yes , \yes, \yes, \yes , 0)} & \cincludegraphics{Images/tetrahedron} & &
 & \makecell{Truncated Tetrahedron\\[2.5mm](\yes , \yes, \yes , \yes , 0)} & \cincludegraphics{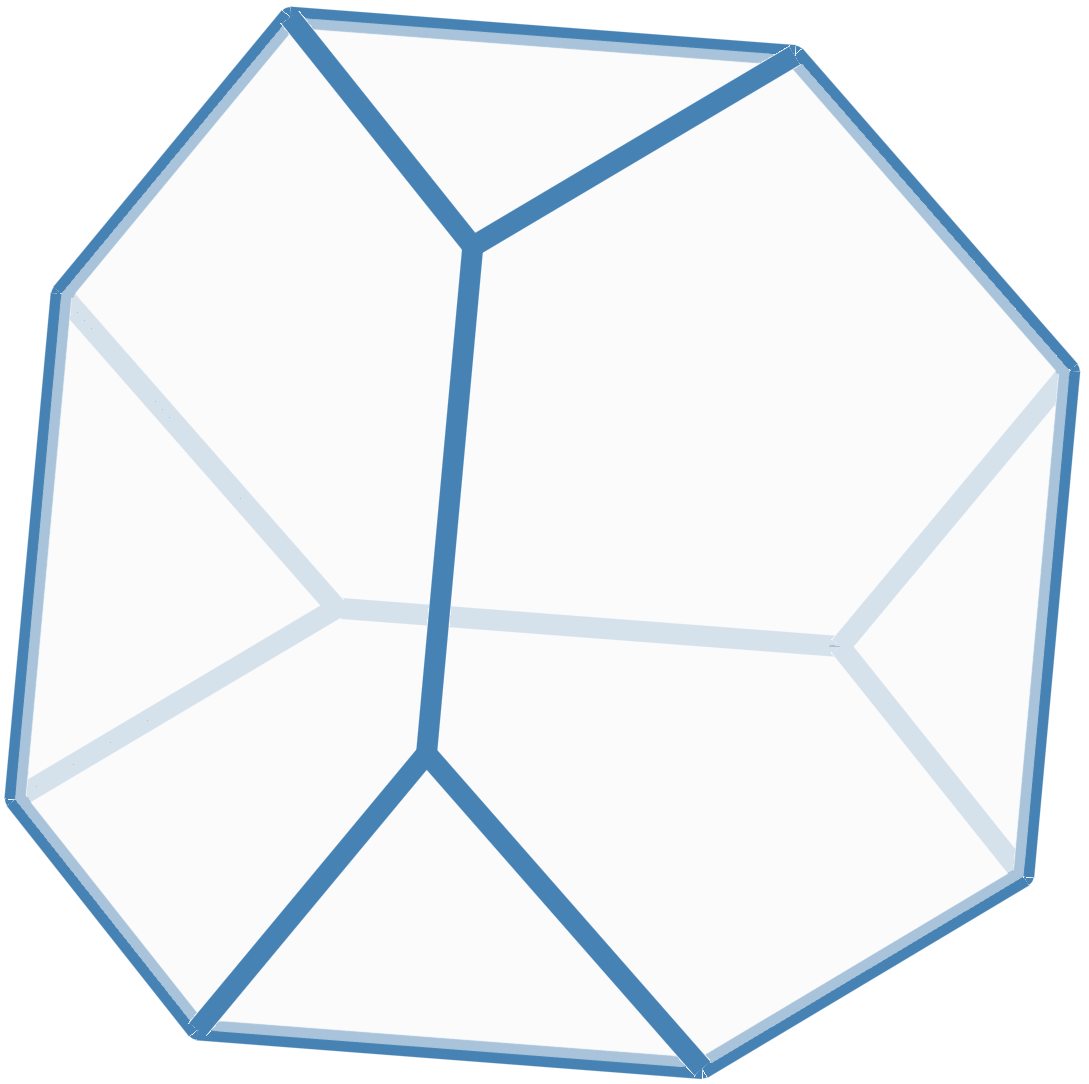} \\
\customdoubleline

\makecell{\\[7mm]\SymOne\rlap{\;\SymThree}\hspace*{-5mm}} & \makecell{Cube\\[2.5mm](\no , \no, \no , \no , 3) }&\cincludegraphics{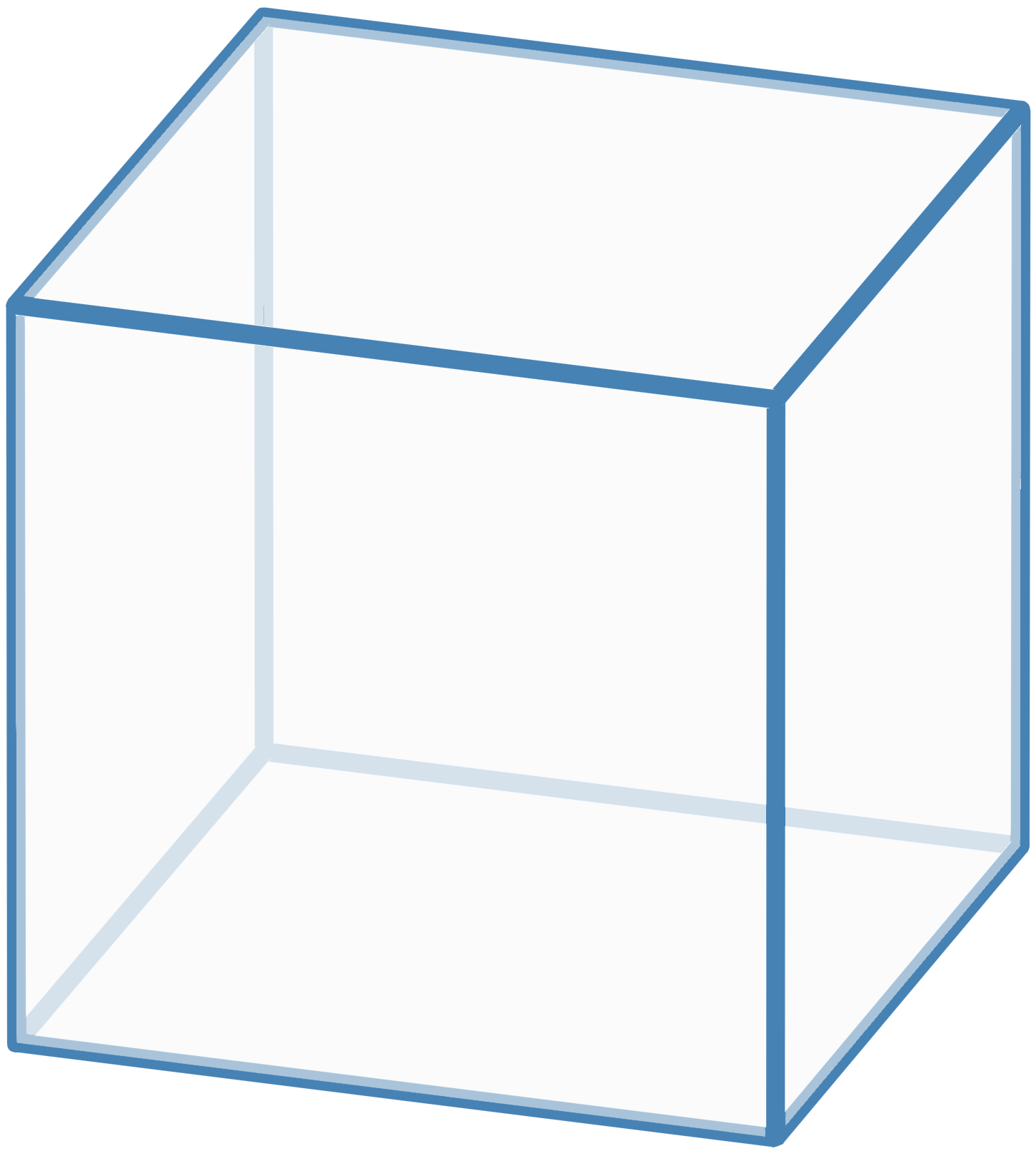} & &
\makecell{\\[7mm]\SymTwo\hspace*{-5mm}} & \makecell{Octahedron \\[2.5mm] (\yes , \yes, \yes , \yes , 0)} & \cincludegraphics{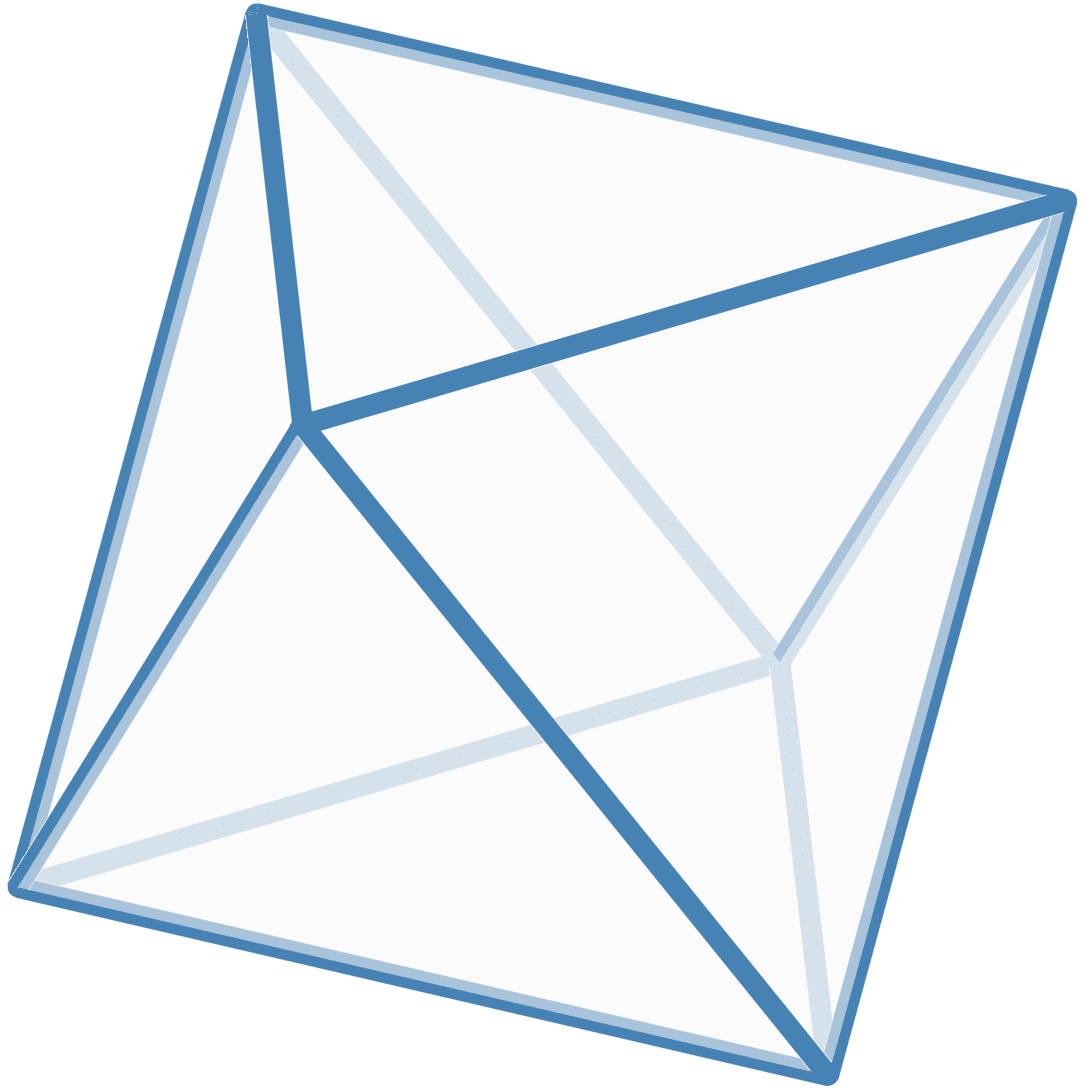} \\ \cline{1-3}\cline{5-7}

\makecell{\\[7mm]\SymThree\hspace*{-5mm}} & \makecell{Cuboctahedron\\[2.5mm] (\no , \no , \no , \no , 0+3)} &\cincludegraphics{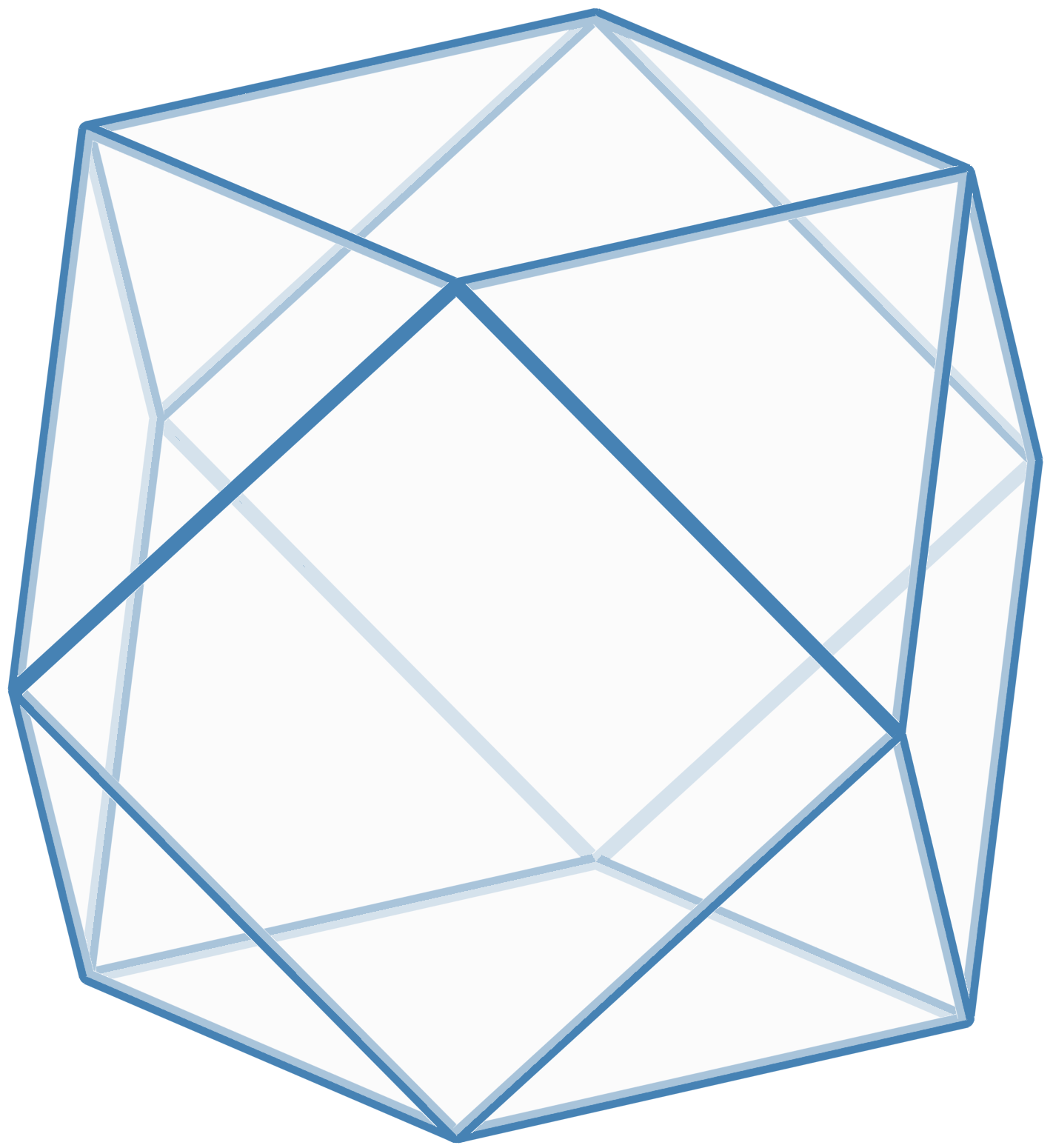} &  & &
\makecell{Truncated Cube\\[2.5mm](\yes , \yes , \yes , \yes , 0) } & \cincludegraphics{Images/truncatedcube.png} \\ \cline{1-3}\cline{5-7}

\makecell{\\[7mm]\SymOne\hspace*{-5mm}} & \makecell{Truncated Octahedron\\[2.5mm] (\no , \no , \no , \no , 5)} & \cincludegraphics{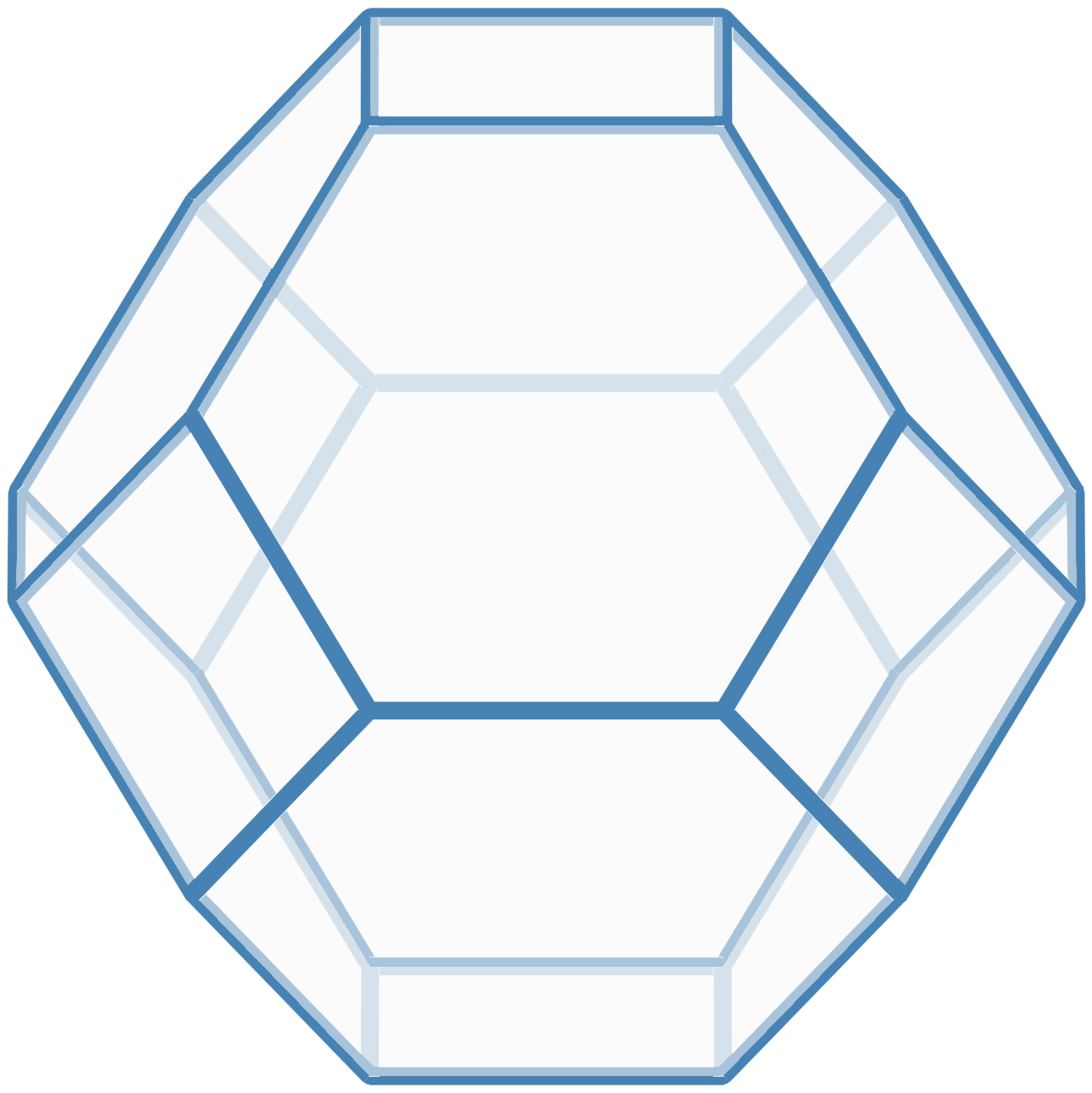} & & \makecell{\\[7mm]\SymThree\hspace*{-5mm}} &
\makecell{Rhombicuboctahedron\\[2.5mm] (\no , \no , \no , \no , 3+3)} & \cincludegraphics{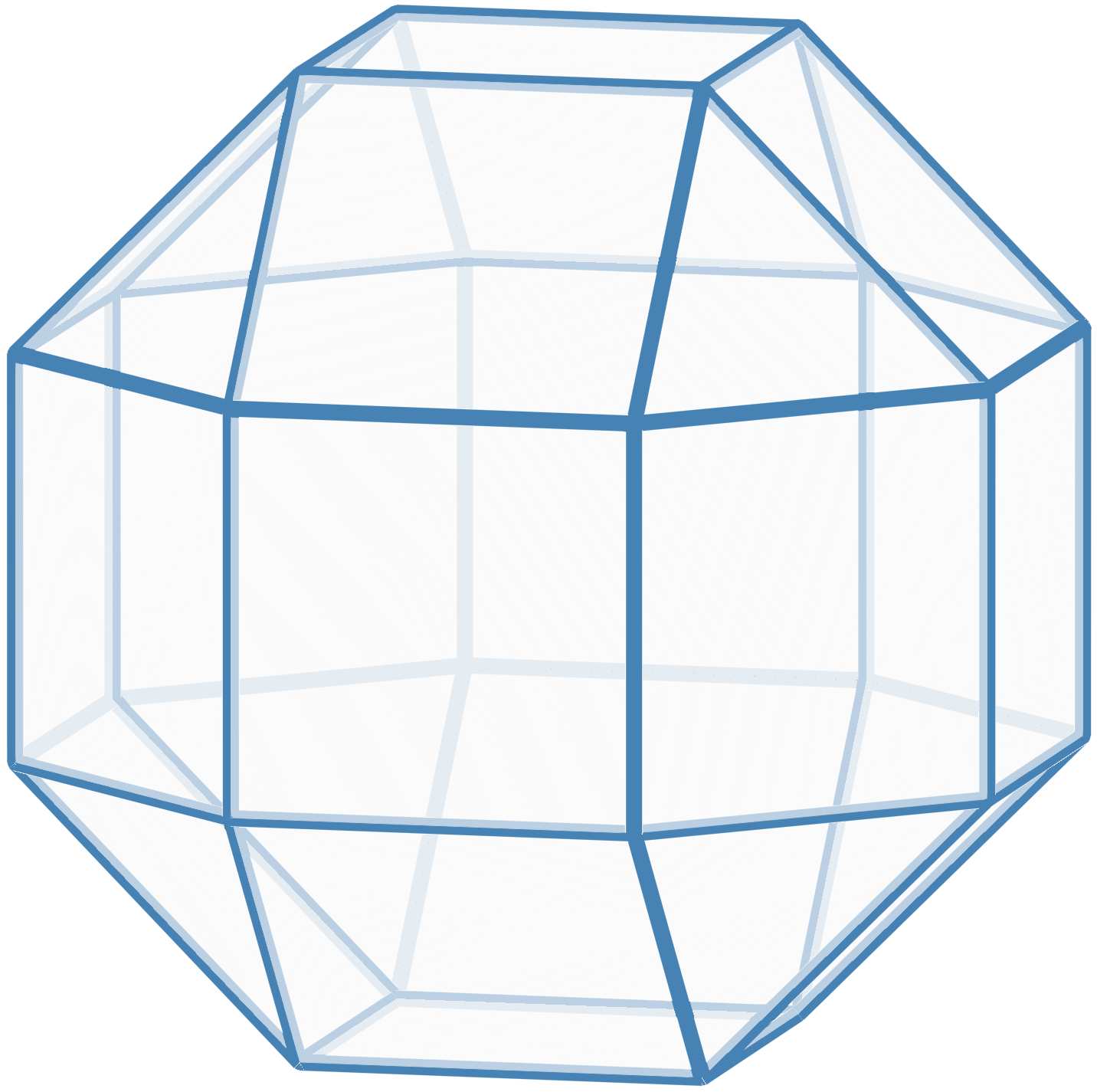}\\ \cline{1-3}\cline{5-7}

\makecell{\\[7mm]\SymOne\hspace*{-5mm}} & \makecell{Truncated Cuboctahedron\\[2.5mm] (\no , \no , \no , \no , 5)} & \cincludegraphics{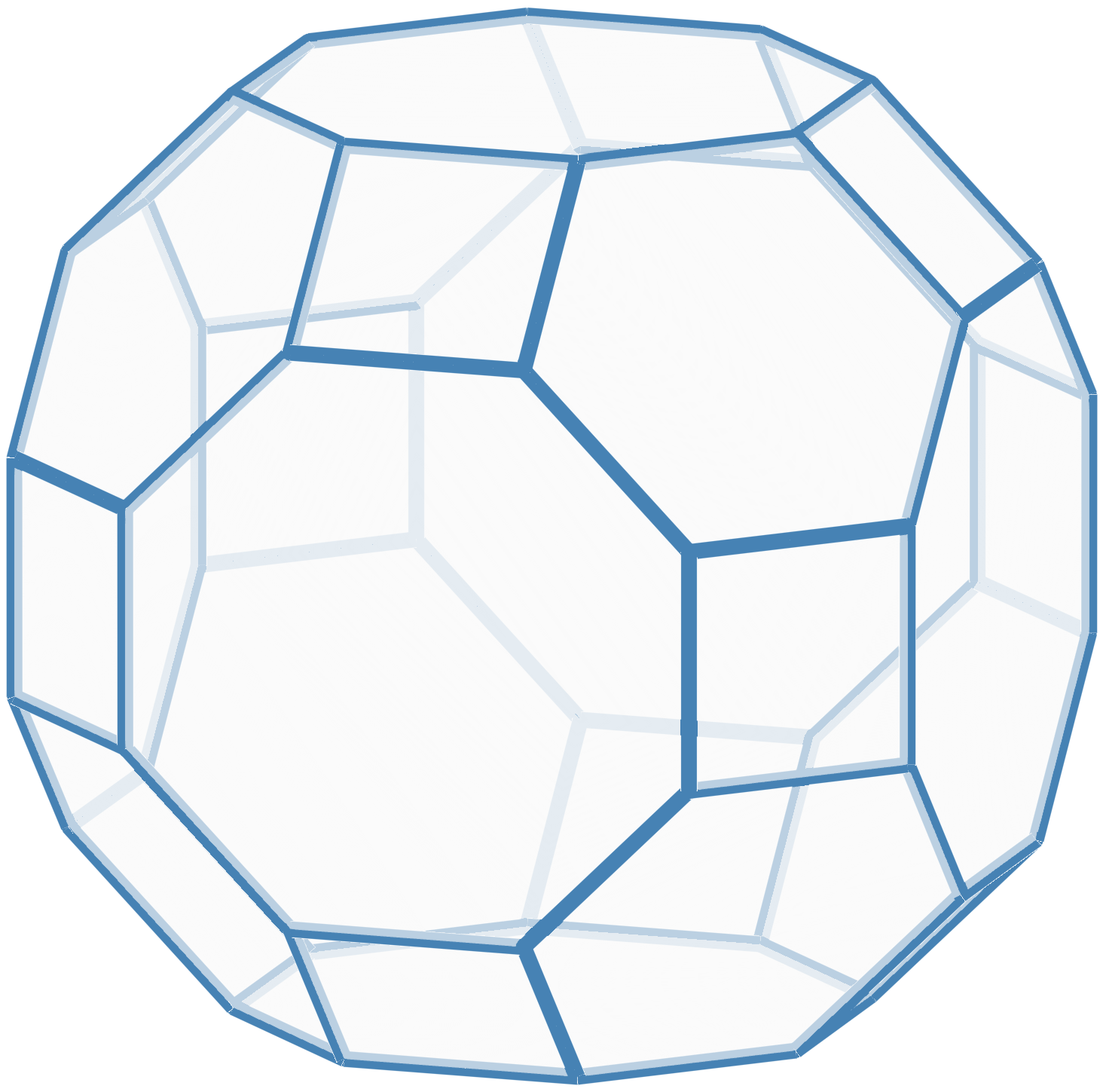} & & &
\makecell{Snub Cube\\[2.5mm](\yes , \yes , \yes , \yes , 0)} & \cincludegraphics{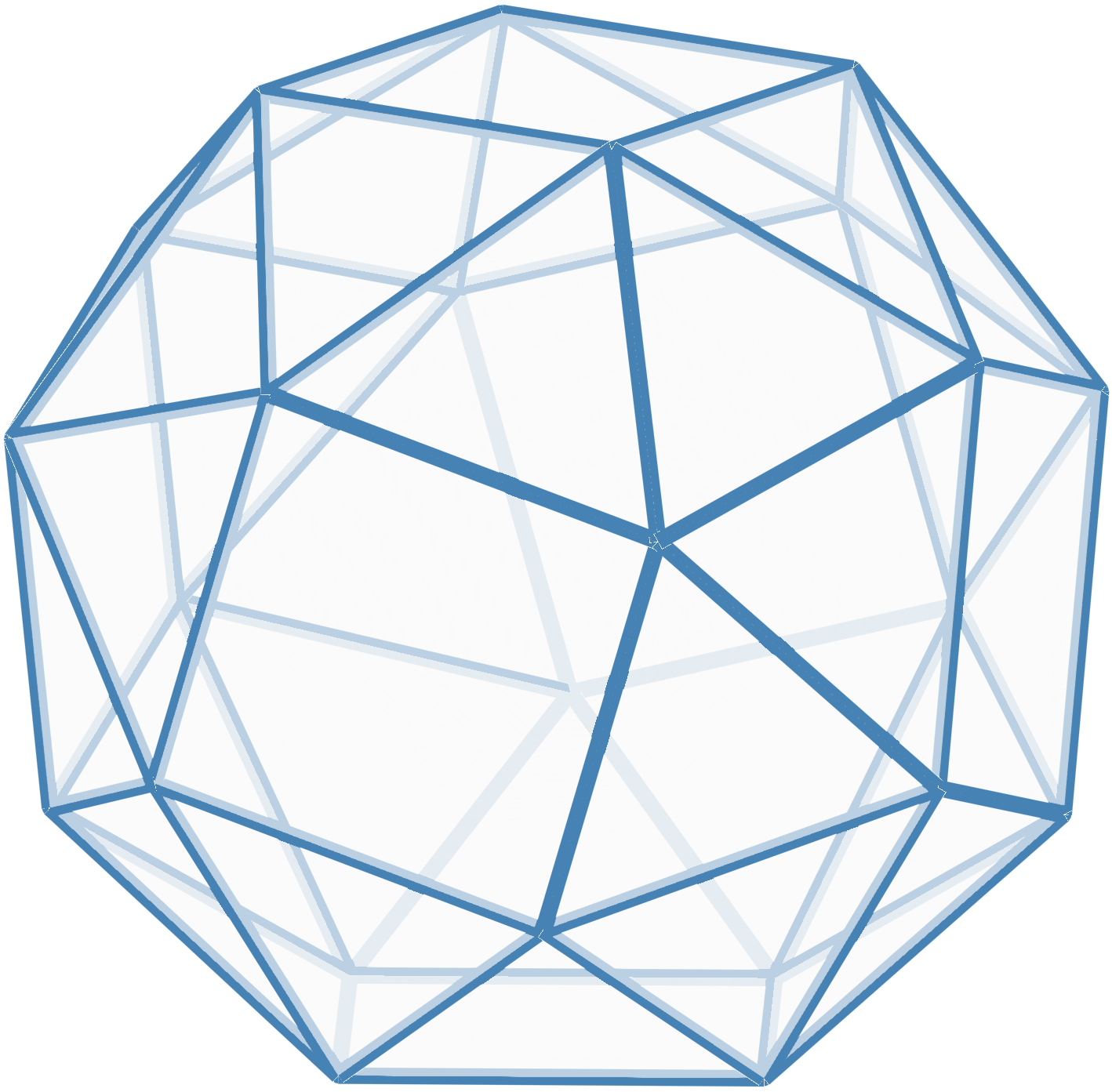}\\ \customdoubleline

\makecell{\\[7mm]\SymTwo\hspace*{-5mm}} & \makecell{Icosahedron \\[2.5mm] (\yes , \yes , \yes , \yes , 0)} & \cincludegraphics{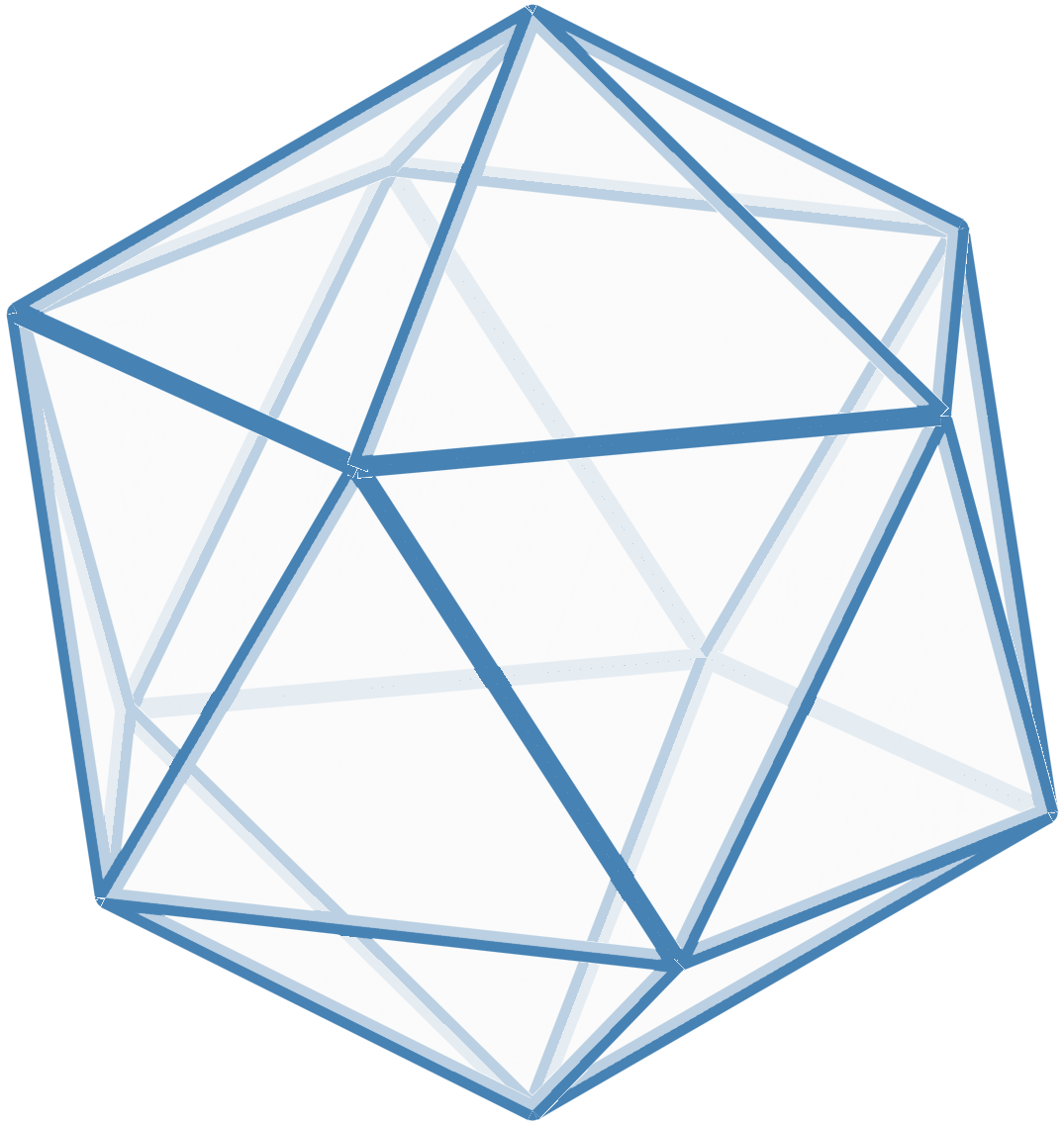} & & \makecell{\\[7mm]\Star\hspace*{-5mm}} &
\makecell{Dodecahedron\\[2.5mm](\no , \noBut , \yes , \yes , 5)} & \cincludegraphics{Images/dodecahedron.png}\\ \cline{1-3}\cline{5-7}

& \makecell{Icosidodecahedron\\[2.5mm](\yes , \yes , \yes , \yes , 0)} & \cincludegraphics{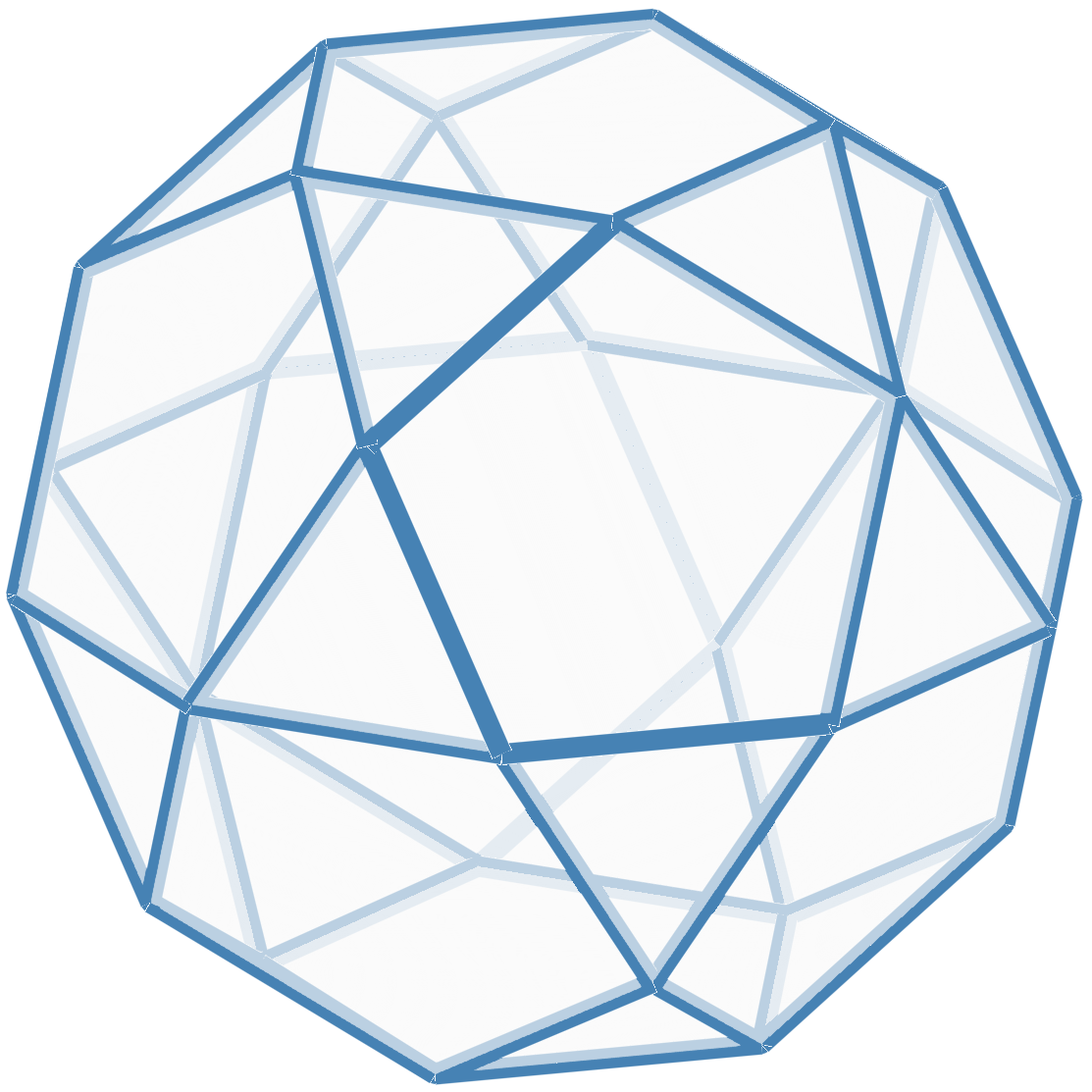} & & \makecell{\\[7mm]\Star\hspace*{-5mm}} &
\makecell{Truncated Dodecahedron\\[2.5mm](\no , \no , \no ,  \Ques , 4)} & \cincludegraphics{Images/truncatedDodecahedron.png}\\ \cline{1-3}\cline{5-7}

\makecell{\\[7mm]\Star\hspace*{-5mm}}
& \makecell{Truncated Icosahedron\\[2.5mm](\no , \noBut , \yes , \yes , 5)} & \cincludegraphics{Images/truncatedIcosahedron.png} & & \makecell{\\[7mm]\SymThree\hspace*{-5mm}} &
\makecell{Rhombicosidodecahedron\\[2.5mm] (\no , \no , \no , \no , 5+3)} & \cincludegraphics{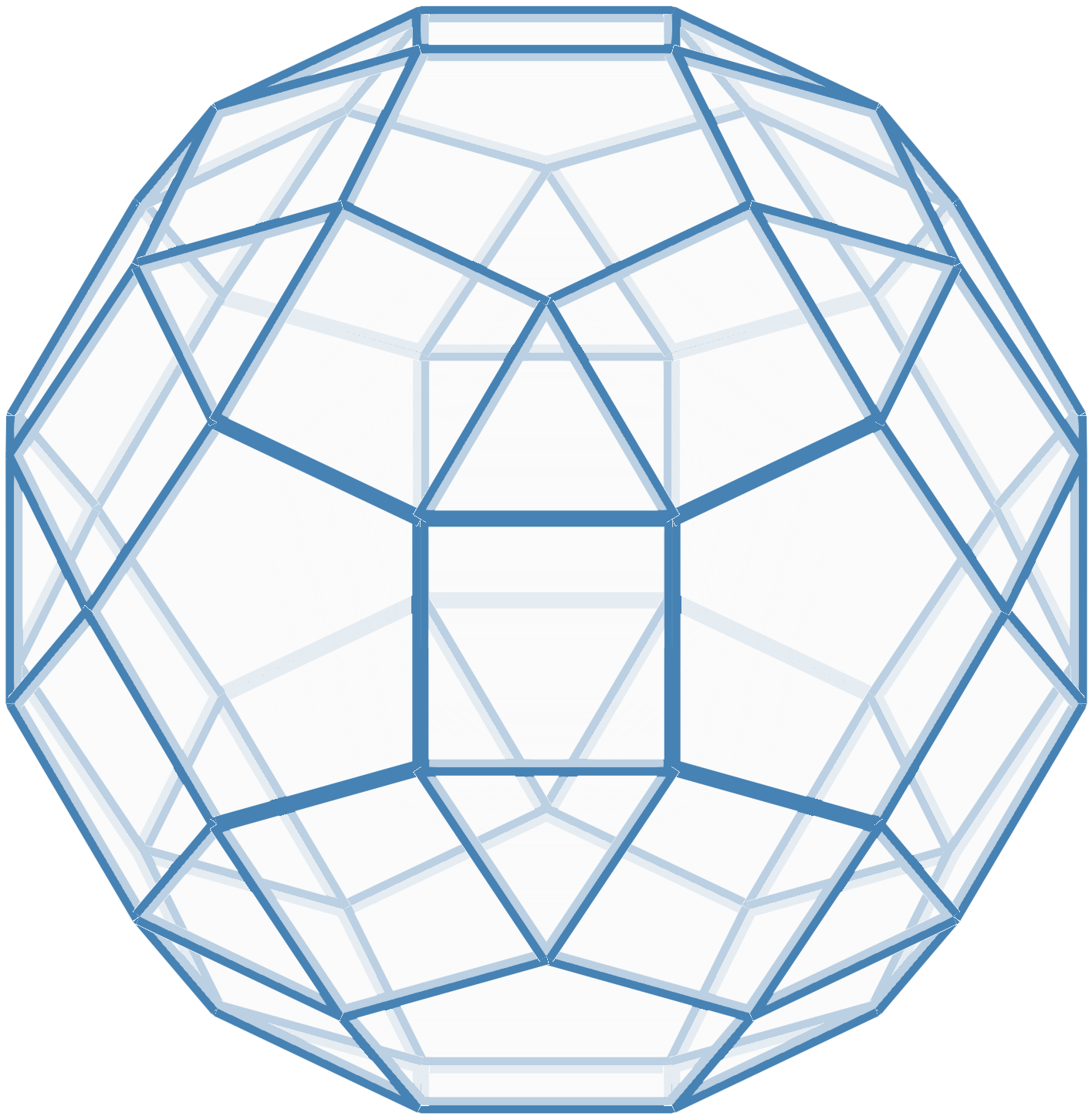}\\ \cline{1-3}\cline{5-7}

\makecell{\\[7mm]\SymOne}\hspace*{-5mm} & \makecell{Truncated Icosidodecahedron\\[2.5mm] (\no , \no , \no , \no , 5)} & \cincludegraphics{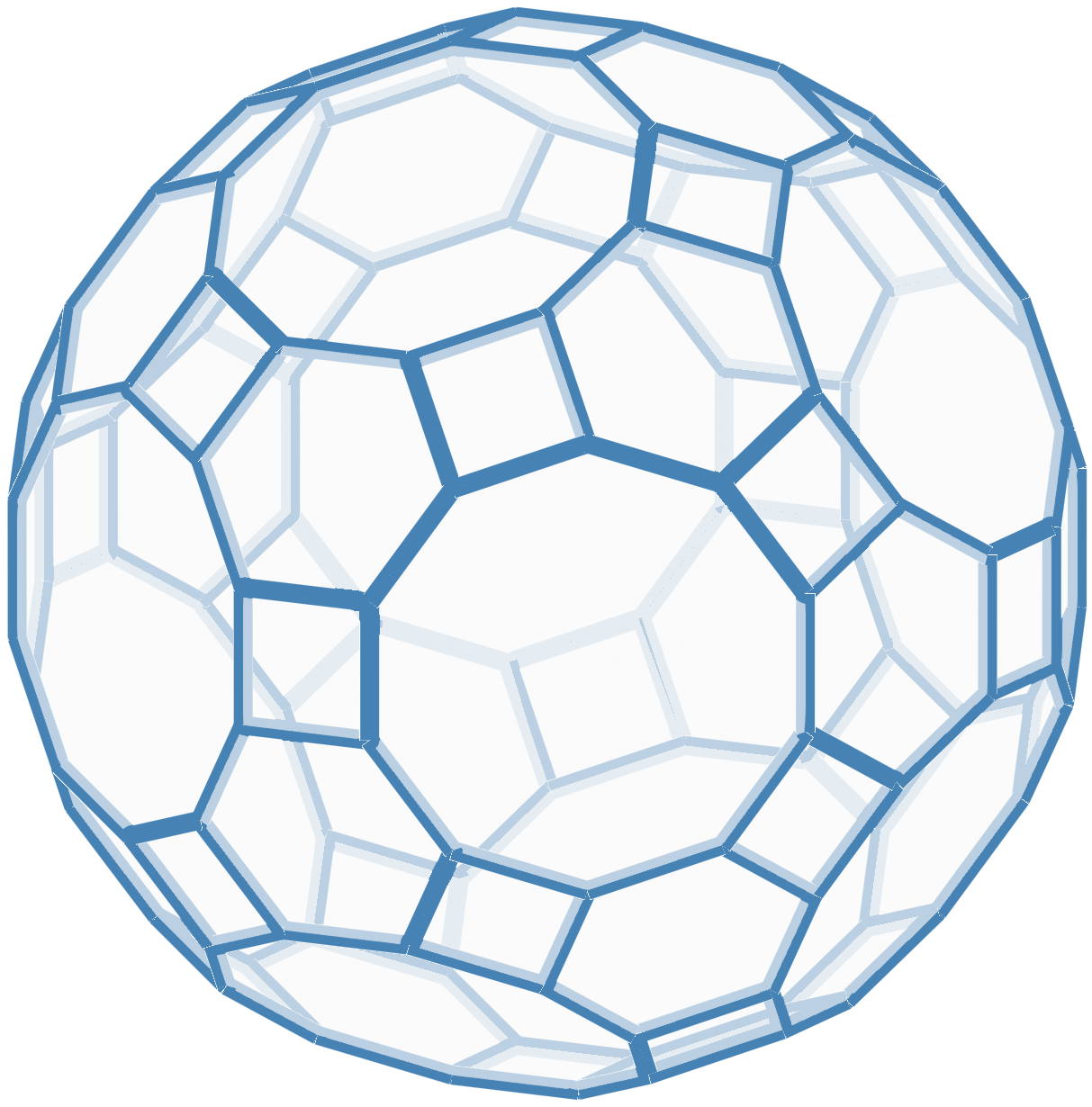} & & &
\makecell{Snub Dodecahedron\\[2.5mm](\yes , \yes , \yes , \yes , 0)} & \cincludegraphics{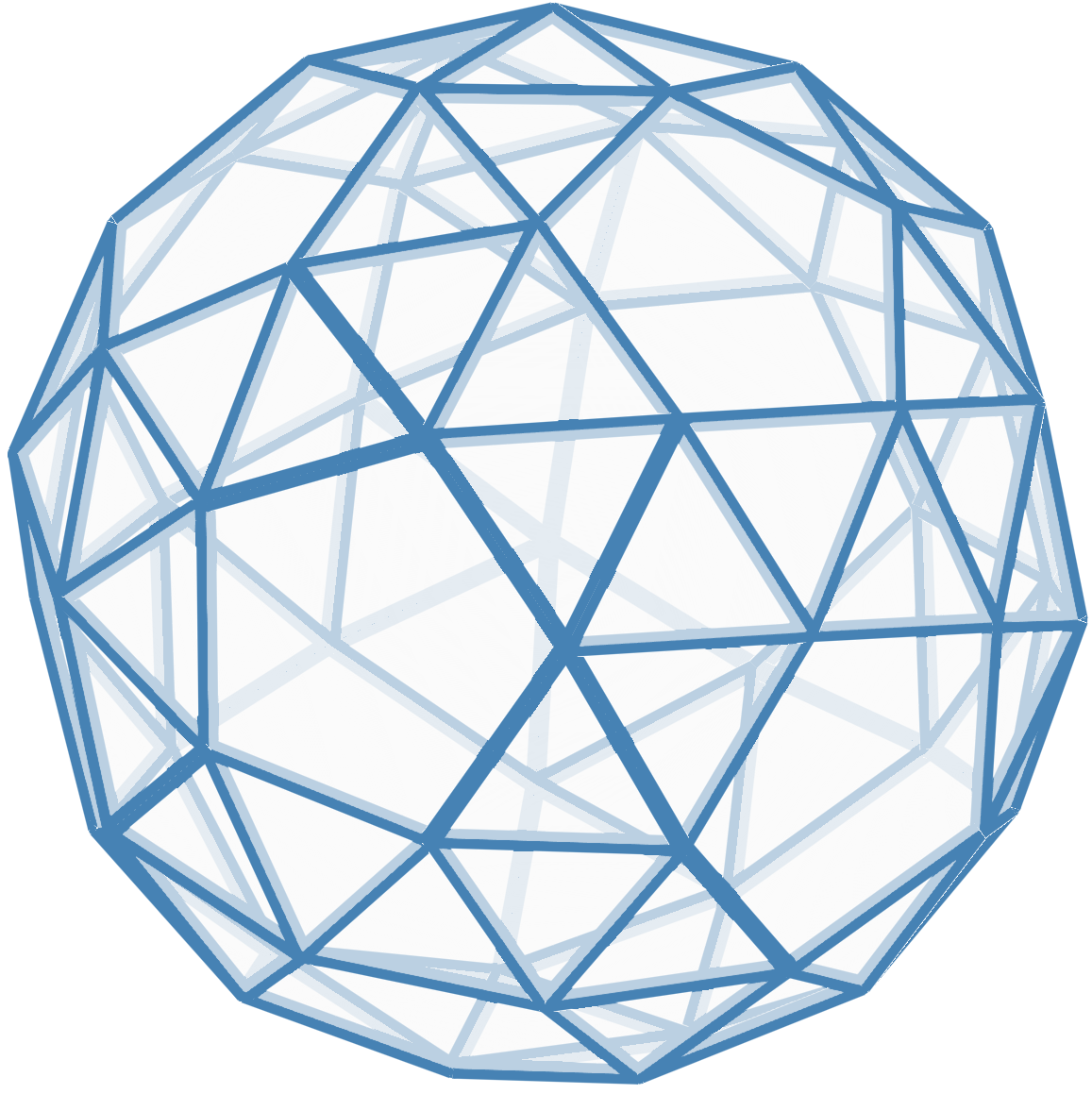}\\ \cline{1-3}\cline{5-7}

\end{tabular}
    \caption{Rigidity properties of the five Platonic and 13 Archimedean solids grouped by the polytopes' symmetry. 
    For an explanation of the cell contents, see \cref{sec:explain_table}.
    }
    \label{tab:comparison}
\end{table}

%% file: sec/table2.tex
\begin{table}[ht!]
\centering
\hspace*{-9mm}
\begin{tabular}{|clc|c|clc|}
\cline{1-3}

\makecell{\\[7mm]\SymTwo\hspace*{-5mm}} &
\makecell{Triakis Tetrahedron\\[2.5mm](\yes , \yes , \yes , \yes , 0)} & \cincludegraphics{Images/TriakisTetrahedron} & \multicolumn{4}{c}{~} \\
\customsingletwo

\makecell{\\[7mm]\SymOne\rlap{\;\SymThree}\hspace*{-5mm}} & \makecell{Rhombic Dodecahedron \\[2.5mm](\no , \no , \no , \no , 5)}&\cincludegraphics{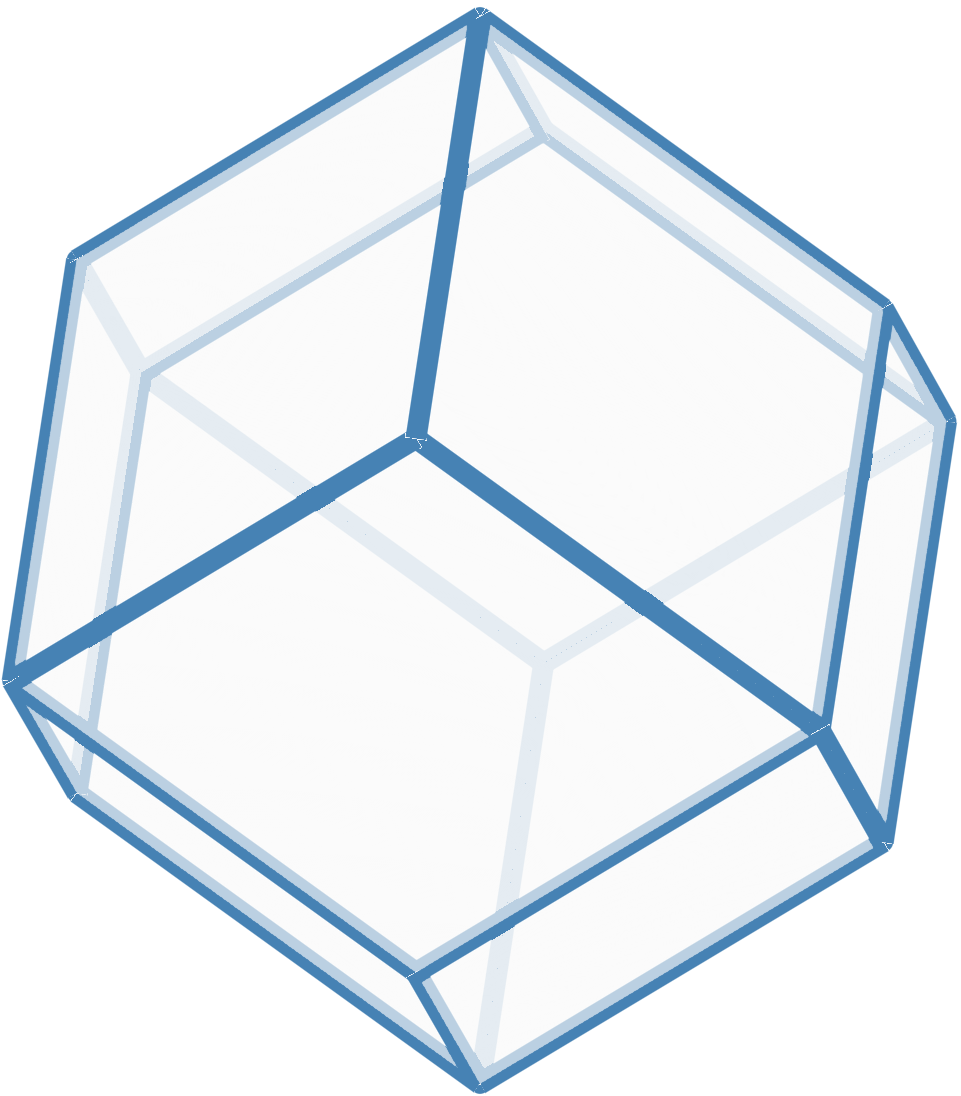} &  &
\makecell{\\[7mm]\SymTwo\hspace*{-5mm}} & \makecell{Triakis Octahedron \\[2.5mm](\yes , \yes , \yes , \yes , 0) } & \cincludegraphics{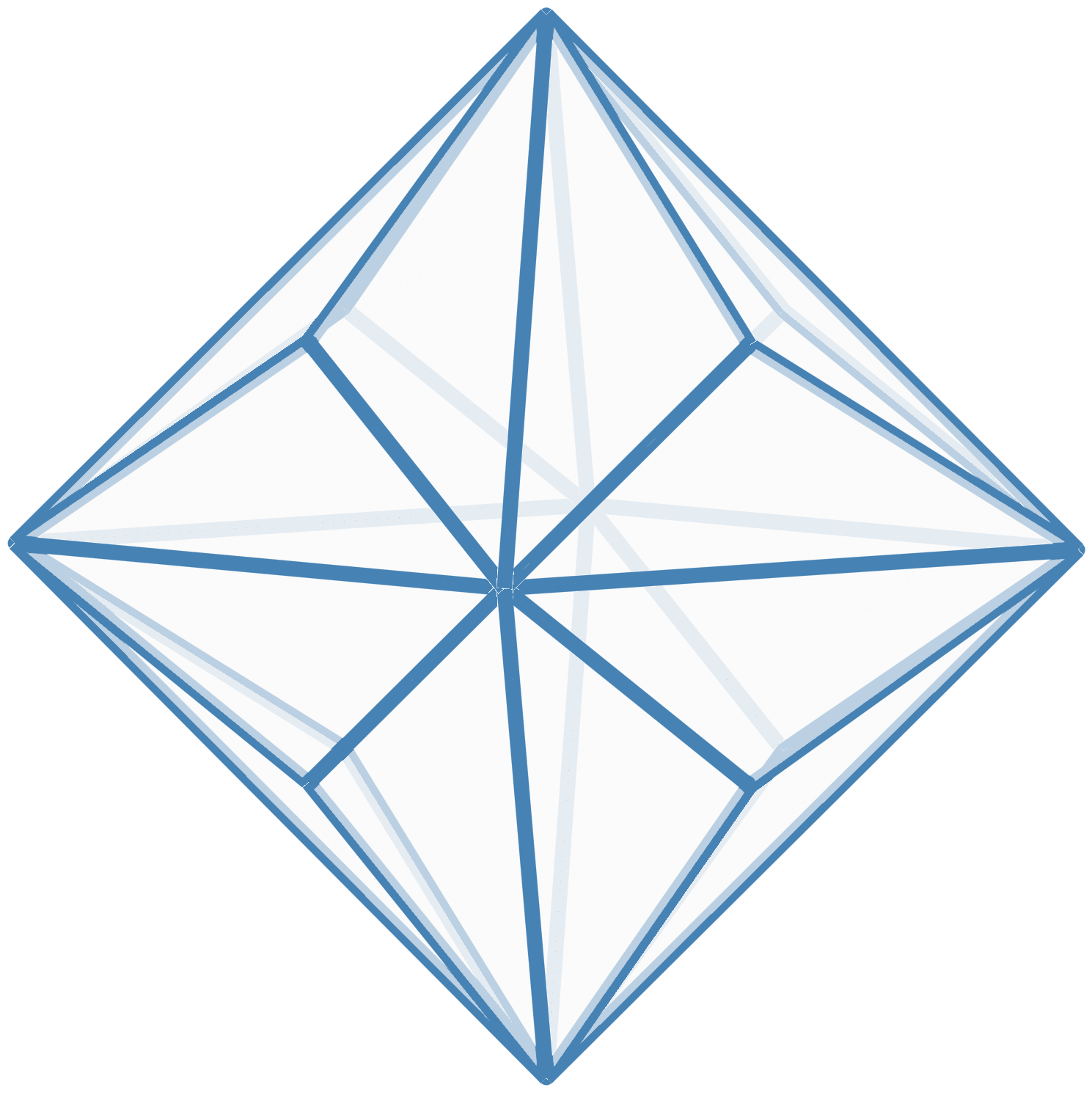} \\ \cline{1-3}\cline{5-7}

\makecell{\\[7mm]\SymTwo\hspace*{-5mm}} & \makecell{Tetrakis Hexahedron\\[2.5mm](\yes , \yes , \yes , \yes , 0)} & \cincludegraphics{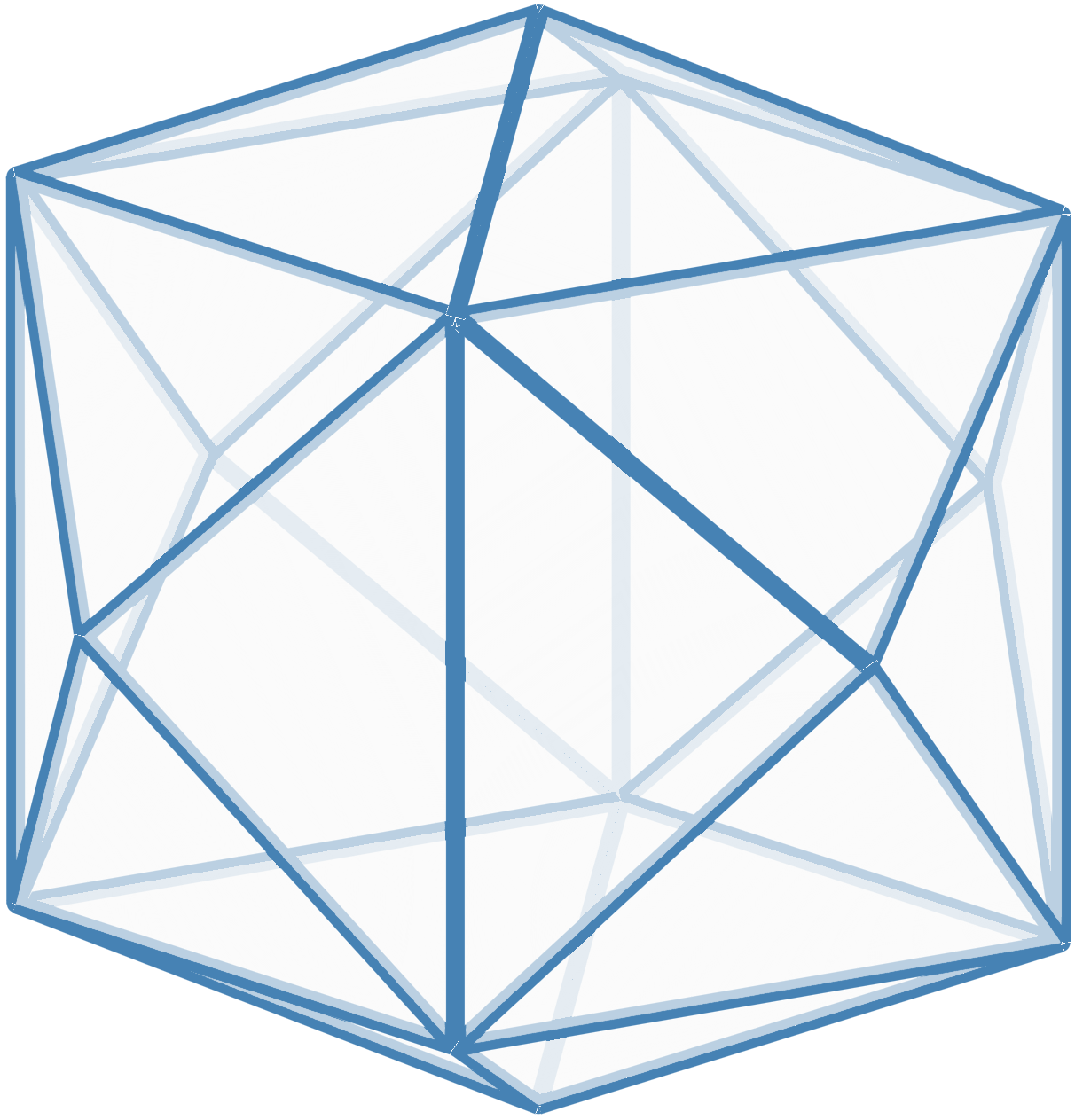} & & &
\makecell{Deltoidal Icositetrahedron\\[2.5mm](\yes , \yes , \yes , \yes , 0)} & \cincludegraphics{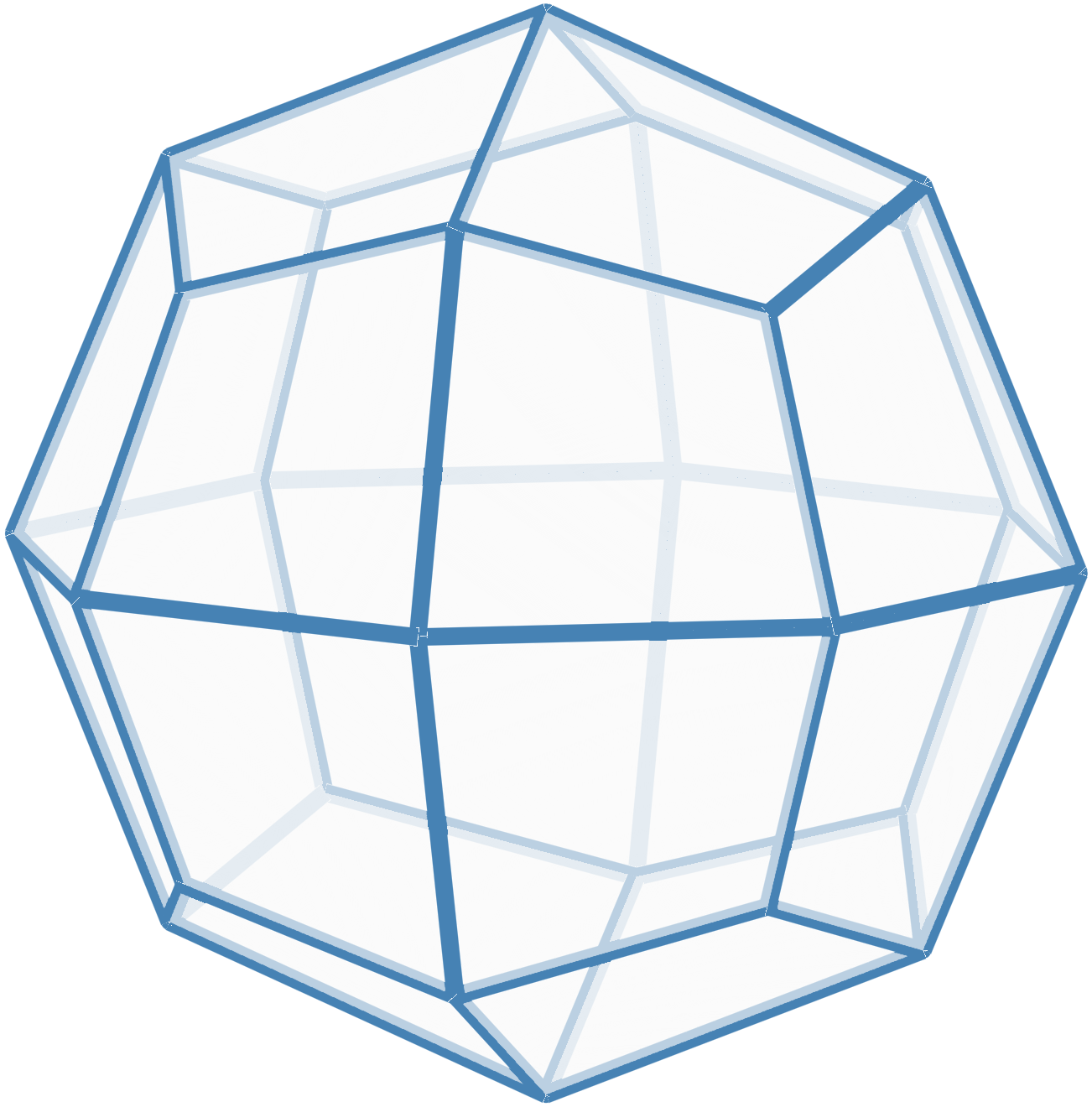}\\ \cline{1-3}\cline{5-7}

\makecell{\\[7mm]\SymTwo\hspace*{-5mm}} & \makecell{Disdyakis Dodecahedron\\[2.5mm](\yes , \yes , \yes , \yes , 0)} & \cincludegraphics{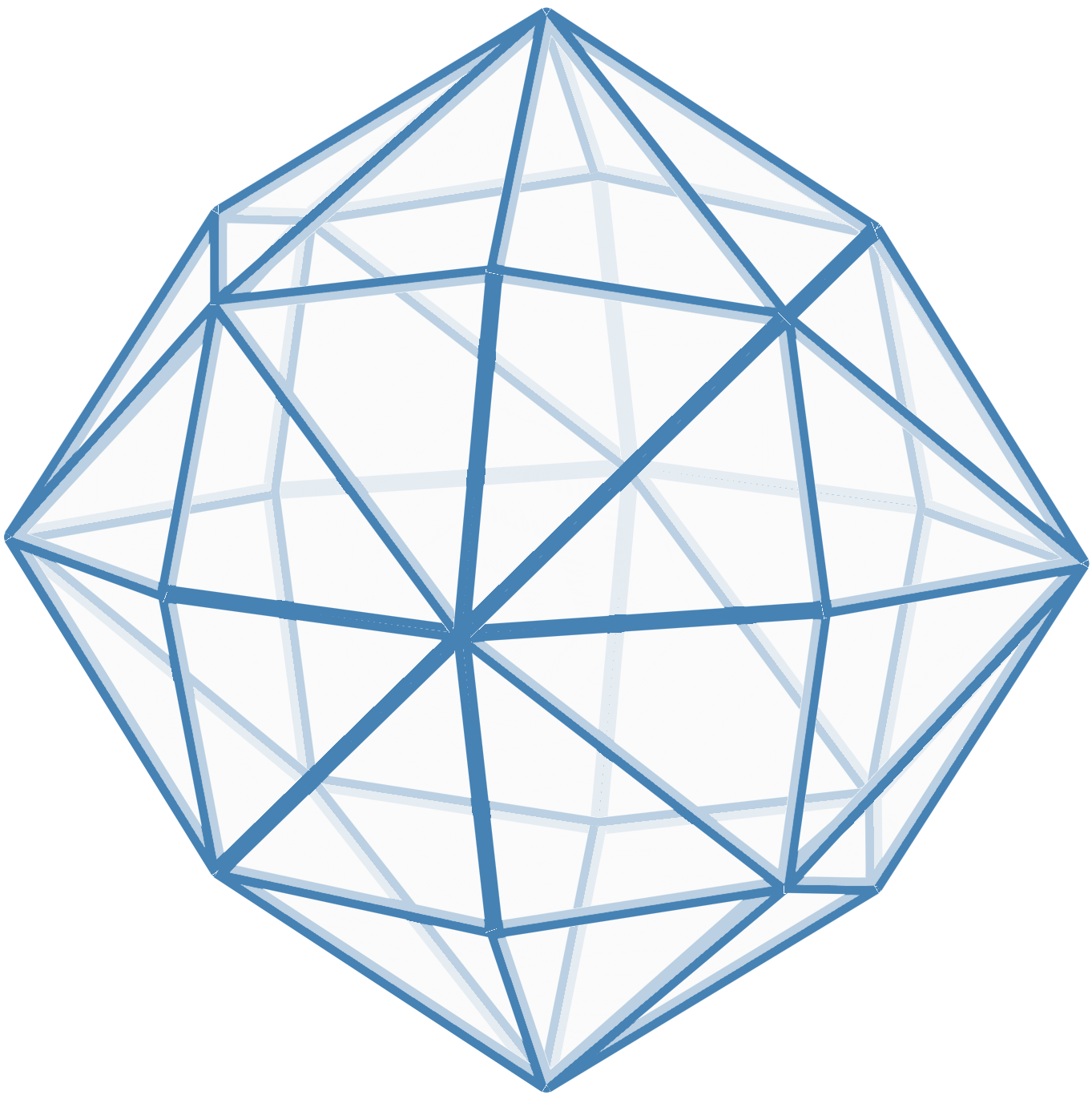} & & &
\makecell{Pentagonal Icositetrahedron\\[2.5mm](\yes , \yes , \yes , \yes , 0)} & \cincludegraphics{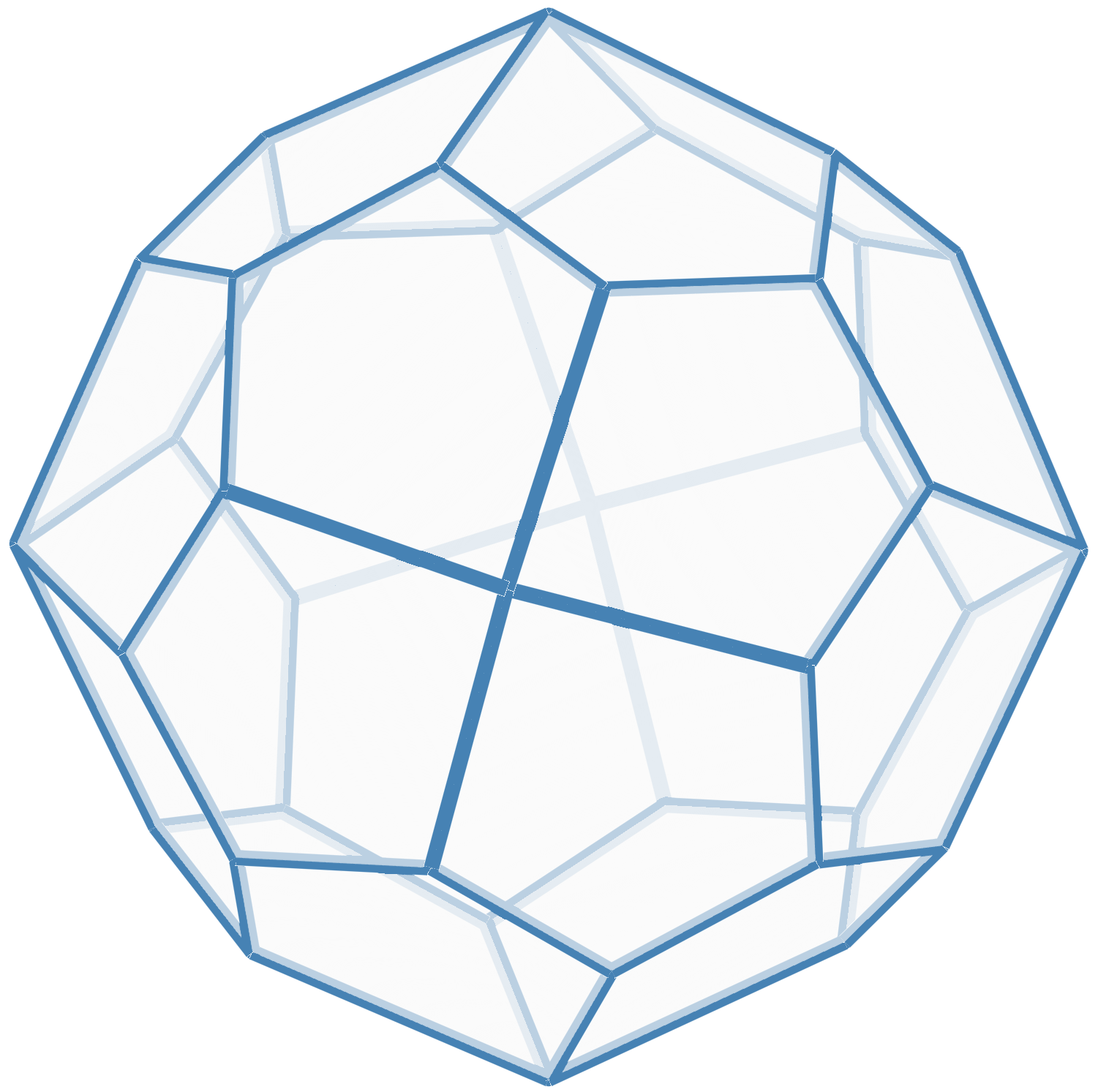}\\[2.5mm] \customdoublelinetwo

\makecell{\\[7mm]\SymOne\rlap{\;\SymThree}\hspace*{-5mm}} & \makecell{Rhombic Triacontahedron\\[2.5mm](\no , \no , \no , \no , 9)} & \cincludegraphics{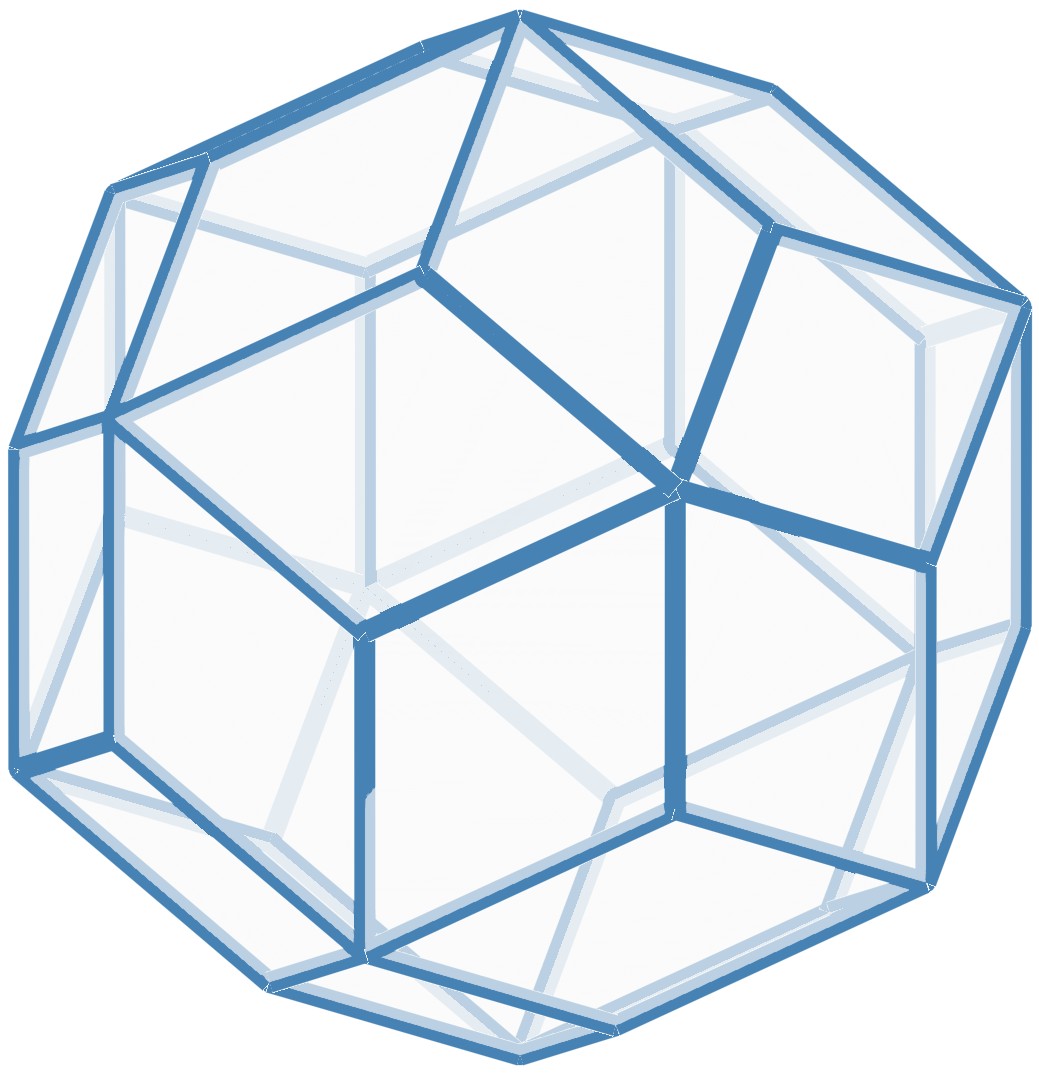} & & \makecell{\\[7mm]\SymTwo\hspace*{-5mm}} &\makecell{Triakis Icosahedron\\[2.5mm](\yes , \yes , \yes , \yes , 0) } & \cincludegraphics{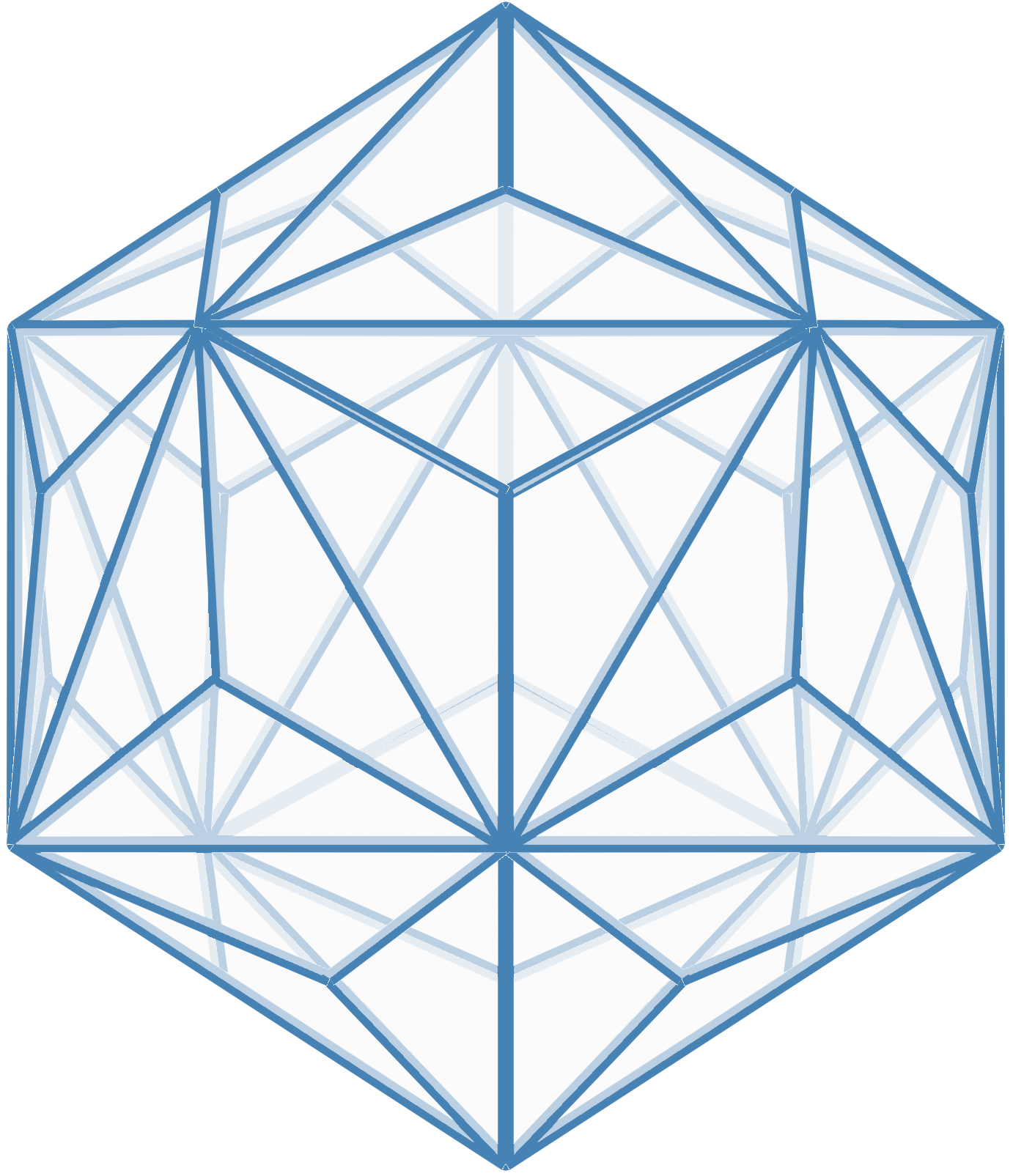} \\ \cline{1-3}\cline{5-7}

\makecell{\\[7mm]\SymTwo\hspace*{-5mm}} & \makecell{Pentakis Dodecahedron\\[2.5mm](\yes , \yes , \yes , \yes , 0)} & \cincludegraphics{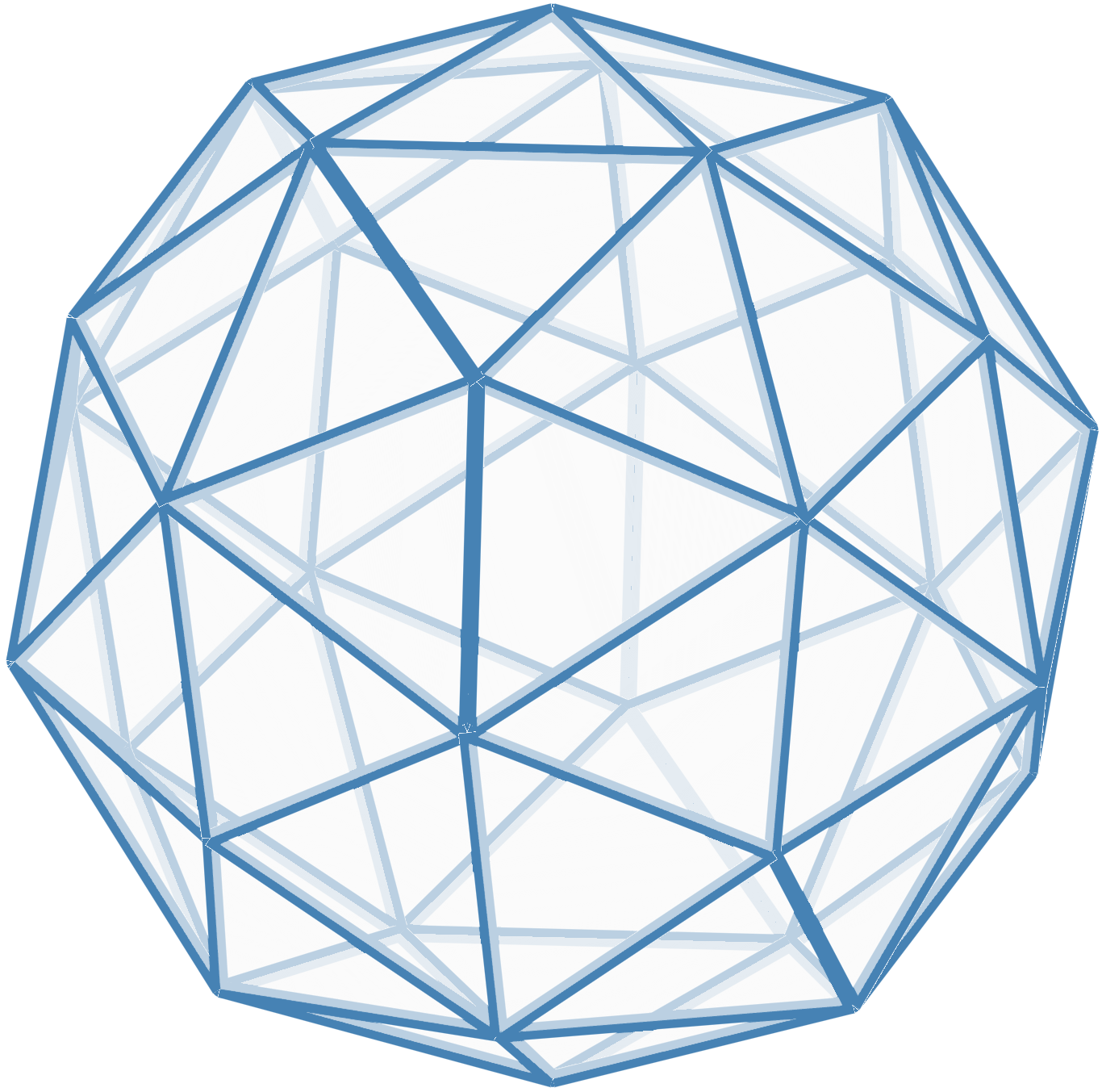} & & &
\makecell{Deltoidal Hexecontahedron\\[2.5mm](\yes , \yes , \yes , \yes , 0)} & \cincludegraphics{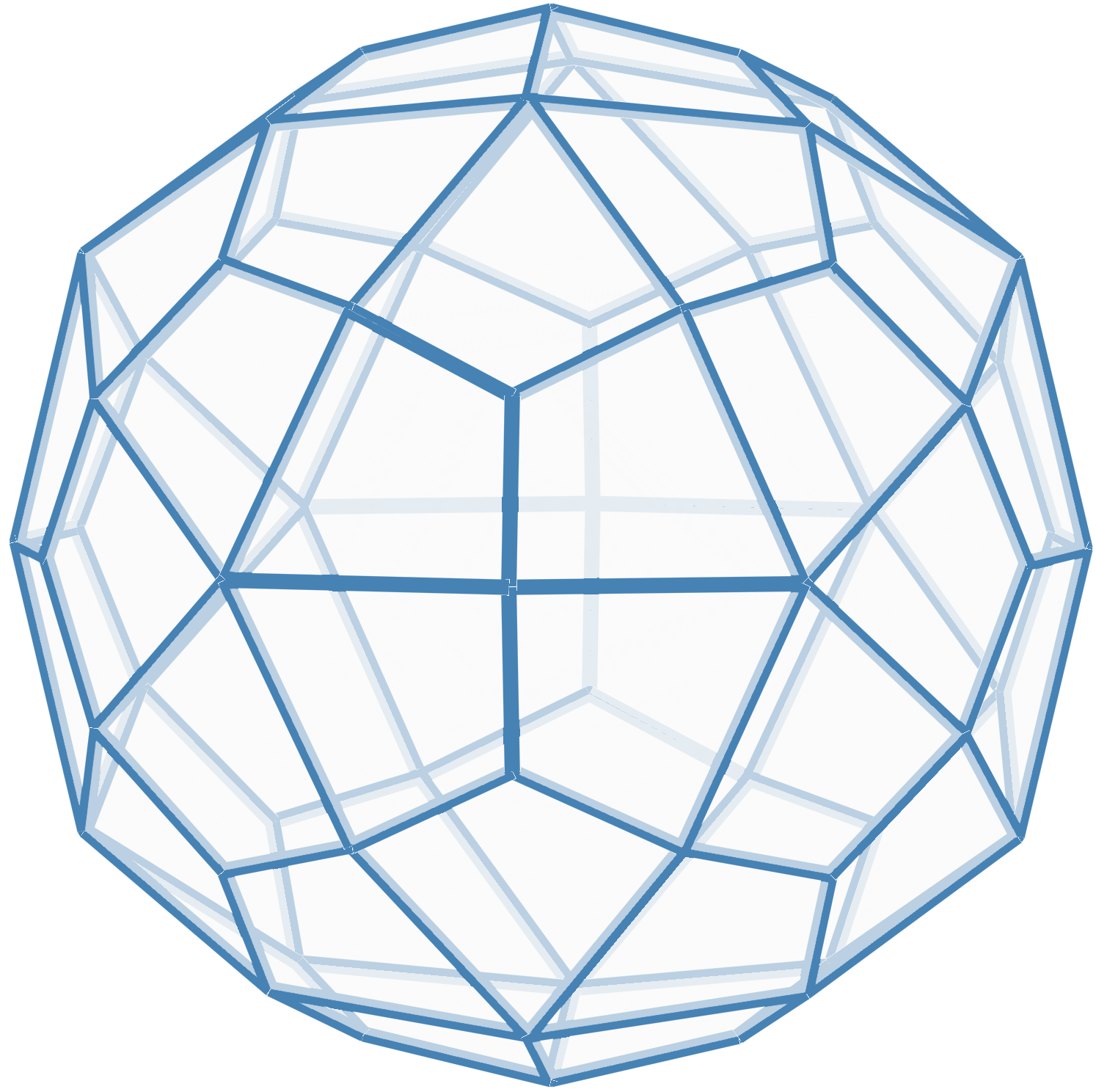}\\ \cline{1-3}\cline{5-7}

\makecell{\\[7mm]\SymTwo\hspace*{-5mm}} & \makecell{Disdyakis Triacontahedron\\[2.5mm](\yes , \yes , \yes , \yes , 0)} & \cincludegraphics{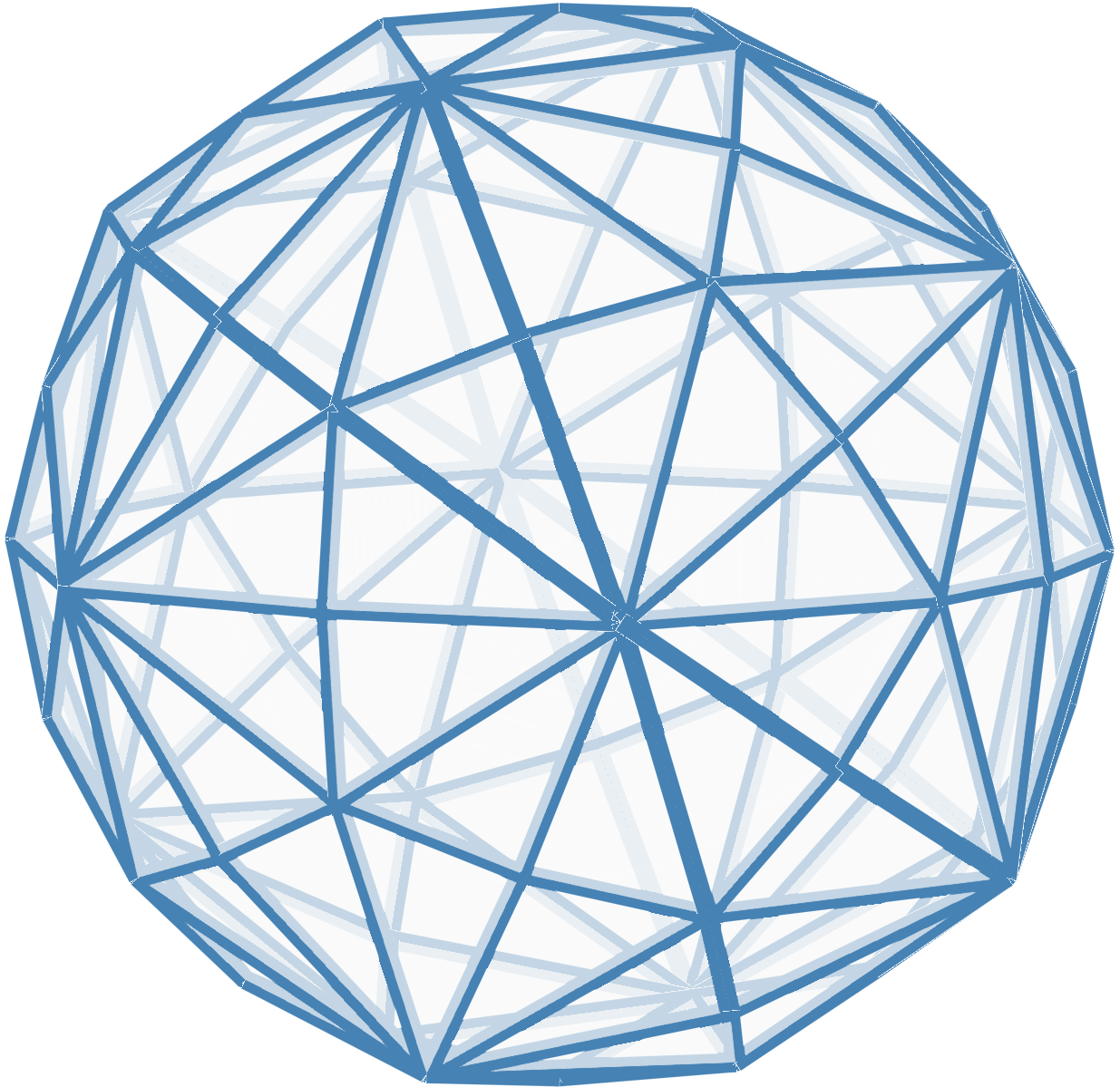} & & &
\makecell{Pentagonal Hexecontahedron\\[2.5mm](\yes , \yes , \yes , \yes , 0)} & \cincludegraphics{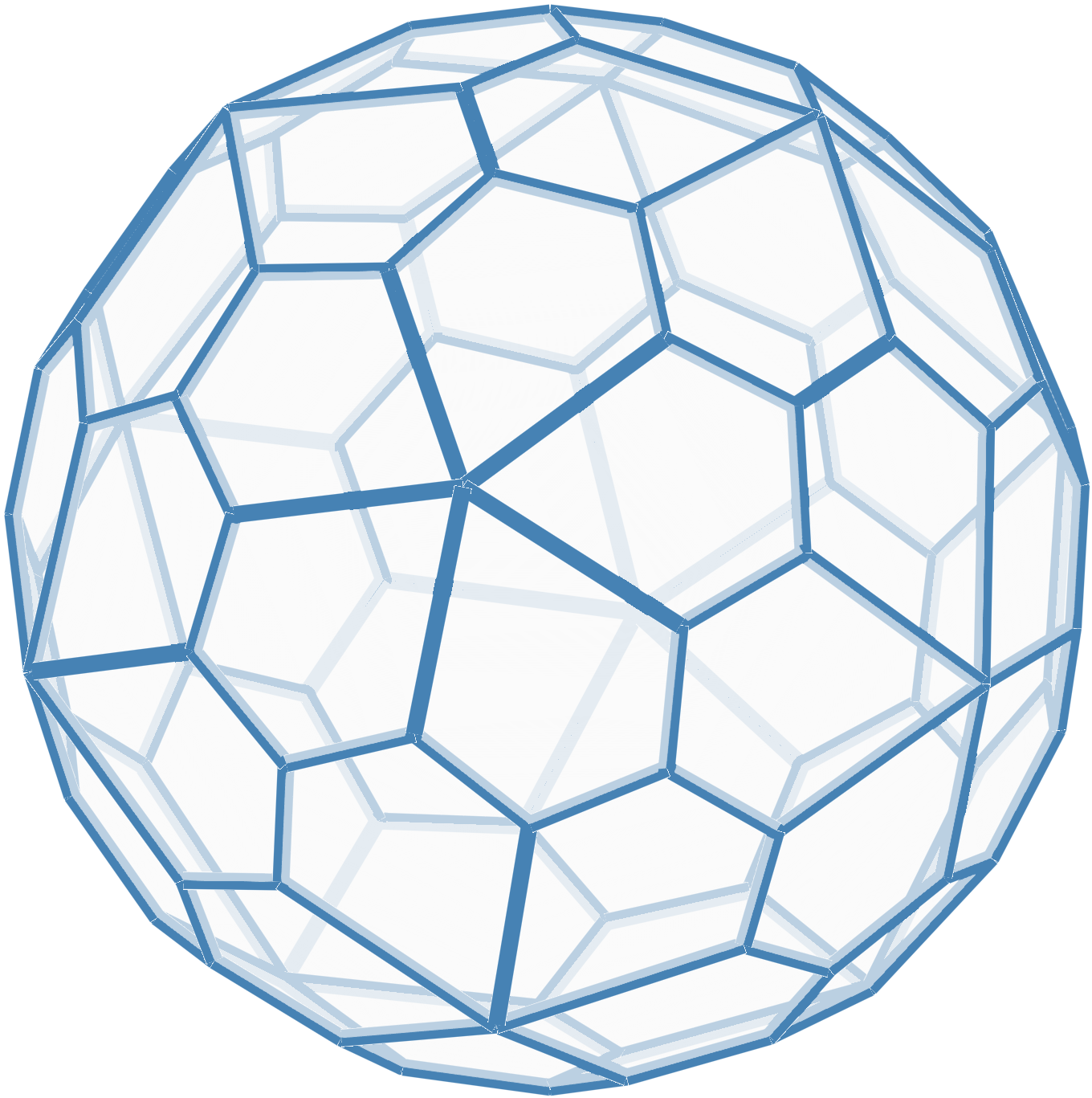}\\ \cline{1-3}\cline{5-7}

\end{tabular}
    \caption{Rigidity properties of the 13 Catalan solids grouped by the polytopes' symmetry. The polytopes are sorted according to the position of the dual Archimedean solid in \Cref{tab:comparison}. 
    For an explanation of the cell contents, see \cref{sec:explain_table}.
    }
    \label{tab:catalan-comparison}
\end{table}

%% file: sec/appendix.tex
\appendix

\section{Dimension bounds for first-order flex spaces}
\label{sec:flexible_polytopes}
\label{sec:dimension_estimates}

The goal of this section is to explain some of the experimentally observed dimension values of the first-order flex space in \cref{tab:comparison} and \ref{tab:catalan-comparison} and draw conclusions from their values.
For this we revisit the constructions for flexible polytopes introduced in \cite[Section 4]{himmelmannschulzewinter2025rigiditypolytopesedgelength}.

Recall that $\mreal^*(\mathcal P,P)$ is a semi-algebraic set. 
At regular points (\ie\ where the space is smooth), the first-order flex space can be understood as the tangent space to $\mreal^*(\mathcal P,P)$, and hence, their (local) dimensions agree.
At non-regular points the relation~is~more complicated.
The strategy in this section is to derive lower bounds on the dimension of the first-order flex space by constructing sub-manifolds $M\subseteq\mreal^*(\mathcal P,P)$.

\subsection{Generic Minkowski sums}
\label{sec:generic_Msum}

Given polytopes $P_1,...,P_m\subset\RR^d,m\ge 2$ in general orientation relative to each other, their Minkowski sum
$$P:= P_1+ \cdots + P_m$$
is a flexible polytope for which we can distinguish two types of flexes:
\begin{myenumerate}
    \item \Def{summand flexes} result from one of the summands $P_i$ being flexible.
    \item \Def{reorientation flexes} result from a relative reorientation of the summands. These exist even if all summands are rigid.
\end{myenumerate}
Both types of flexes are called \Def{Minkowski flexes} (as introduced in \cite[Section 4.1]{himmelmannschulzewinter2025rigiditypolytopesedgelength}).
The assumption that the summands are in general position is necessary, since coincidental coplanarities of faces between the summands can prevent both types of flexes from existing.
As a consequence, $\mreal^*(\mathcal P, P)$ contains a copy of
$$\mreal^*(\mathcal P_1,P_1)\times \cdots\times \mreal^*(\mathcal P_m,P_m) \times \RelO,$$
where $\RelO$ 
is a smooth open manifold that parametrizes the generic relative~orientations of the summands whose Minkowski sum is combinatorially equivalent~to~$P$.

Several polytopes in \cref{tab:comparison} and \ref{tab:catalan-comparison} are generic Minkowski sums (marked with~\SymThree).
This includes
\begin{itemize}
    
    \item the \emph{cuboctahedron}, which is the generic Minkowski sum of two simplices, each of which is rigid.
    
    \item the \emph{rhombicuboctahedron}, which is the generic Minkowski sum of a cube and an octahedron. The octahedron is (first-order) rigid. 
    The cube has~a~3-di\-mensional space of first-order flexes, all of which get inherited by the~Min\-kowski sum as summand flexes.
    
    \item the \emph{rhombicosidodecahedron}, which is the generic Minkowski sum of a dodecahedron and an icosahedron. Both summands are rigid, but the dode\-cahedron has a 5-dimensional space of first-order flexes, which too gets~inherited by the Minkowski sum (\cf\ \cref{sec:first_order}).
\end{itemize}
The polytopes marked with \SymOne\ \SymThree\ are further examples. 
They are \emph{generic~zonotopes} and will be discussed separately in \cref{sec:generic_zonotopes}.

All of the above polytopes are generic Minkowski sums of \textit{two} full-dimensional polytopes.
Hence, the relative orientations of their summands can be parametrized by orthogonal transformations. 
That is,
$\RelO$ is an open subset of $\OO(\RR^3)$ and~provides three dimensions of reorientation flexes.
Remarkably, in all three cases the dimension of the first-order flex space matches the lower bound we get from the above discussion.
As a consequence we find that, locally at their respective realizations, the Minkowski flexes are the only flexes for these polytopes.

\subsection{Zonotopes}
\label{sec:zonotopes}

A \Def{zonotope} $Z\subset\RR^d$ is a Minkowski sum of finitely many line~segments.
A flex of $Z$ that consists of zonotopes throughout is called a \Def{zonotope flex}.
Such are entirely described by a continuous reorientation of the generating line segments.
Note that such a reorientation needs to respect the linear dependencies between the segments in order to preserve the combinatorial type of the zonotope.

It was proven in \cite[Section 4.2]{himmelmannschulzewinter2025rigiditypolytopesedgelength} that zonotope flexes do always exist: the group $\GL(\RR^d)$ can be thought of as acting on $\mreal (\mathcal Z,Z)$ by changing the orientations of the generating line segments, while preserving linear dependencies and the line segments' lengths.
Since $\GL(\RR^d)$ is a Lie group, this action is smooth and generates flexes that~we call \Def{linear zonotope flexes}.

Let $\operatorname{Con}(\RR^d)\subset\GL(\RR^d)$ be the subgroup of \emph{conformal transformations}, 
\ie\ scalar multiples of orthogonal transformations.
Suppose that the line arrangement of $Z$ is \Def{essential}, \ie\ there are no proper subspaces $U_1,...,U_r\subset\RR^d$ with~$\dim U_1+\cdots+\dim U_r=d$ that contain all line segments.
One can check that $\operatorname{Con}(\RR^d)$ is precisely the stabilizer of $Z$~in $\mreal^*(\mathcal Z,Z)$.
In consequence, the action of $\GL(\RR^d)$ generates a submanifold $M\subseteq\mreal^* (\mathcal Z,Z)$ of dimension 
$$\dim\GL(\RR^d)-\dim\operatorname{Con}(\RR^d)=d^2-\big(\kern-1pt\textstyle\binom{d}{2}+1\big)=\binom{d+1}{2}-1.$$

For $d=3$ this implies that $\mreal^* (\mathcal Z,Z)$ contains a submanifold of dimension $\binom{4}{2}-1=5$.
The zonotopes in \cref{tab:comparison} and \ref{tab:catalan-comparison} are marked with \SymOne.
With the exception of the cube, all of them have essential arrangement, and hence have a space of first-order flexes of dimension at least five.
Remarkably, four of them also attain the bound with equality:
\begin{itemize}
    \item the \emph{rhombic dodecahedron}, 
    \item the \emph{truncated octahedron} (aka the  \emph{permutahedron}),
    \item the \emph{truncated cuboctahedron},
    \item the \emph{truncated icosidodecahedron}.
\end{itemize}
As a consequence, locally at their respective realizations, the linear zonotope flexes are the only flexes of these polytopes.

The rhombic dodecahedron is a \emph{generic zonotope} (marked with \SymOne\ \SymThree) and its dimension match will be explained in \cref{sec:generic_zonotopes}.
The other three zonotopes~also share a property that is potentially relevant: let $Z$ be any of these zonotopes,\nls then the matroid $\mathcal M(Z)$ of its line arrangement is \Def{linearly unique}. 
This means that $\mathcal M(Z)$ has a unique linear realization up to linear transformation (and scalar multiplication of the vectors).
This already implies that all zonotope flexes are necessarily linear zonotope flexes.
But we currently have no understanding of why all~flexes~of~these zonotopes should (locally) be zonotope flexes in the first place.
This is not necessary: for example, the hexagonal prism is a zonotope with a non-zonotopal flex.

\begin{question}
    \label{q:only_zonotopes}
    Given a zonotope $Z$, when are all its flexes zonotope flexes? Is this true if $\mathcal M(Z)$ is linearly unique?
    More precisely, if $\mathcal M(Z)$ is linearly unique~and~essential, do we have $\mreal^* (\mathcal Z,Z)\simeq \GL(\RR^d)/\operatorname{Con}(\RR^d) \simeq\operatorname{SPD}^*(\RR^d)$?
\end{question}

Here $\operatorname{SPD}^*(\RR^d)$ is the space of symmetric positive definite matrices with determinant one.
The isomorphism is obtained via $\GL(\RR^d)\ni M\mapsto M\T M$.

\subsection{Generic zonotopes}
\label{sec:generic_zonotopes}

A \Def{generic zonotope} is a generic Minkowski sum of~line segments.
It is both a zonotope (\cref{sec:zonotopes}) and a generic Minkowski sum~(\cref{sec:generic_Msum}).
For $d\ge 3$ these are precisely the polytopes whose 2-faces are~all paral\-lelograms.
\Cref{tab:comparison} and \cref{tab:catalan-comparison} contain three of them marked with \SymOne\ \SymThree. These~are the following ($m$ being the number of line segments):
\begin{itemize}
    \item the \emph{cube} ($m=3$),
    \item the \emph{rhombic dodecahedron} ($m=4$), 
    \item the \emph{rhombic triacontahedron} ($m=6$). 
\end{itemize}
Their Minkowski summands, \ie\ the line segments, are clearly rigid and there are no summand flexes.
Moreover, the dimension of $\RelO$ can be computed exactly by constraining the line segments so as to get rid of all isometries: restrict each line segment to be centered at the origin, and for $k\in\{1,...,d-1\}$ (assuming $m\ge d$), restrict the $k$-th line segment to lie in a fixed $k$-dimensional subspace. It has then $k$ degrees of freedom.
Summing up the degrees of freedom, we obtain
\begin{align*}
\dim \RelO 
&= 0 + 1+\cdots + (d-1) + \overbrace{(d-1) +\cdots +(d-1)}^{m-d}
\\ &= \binom{d}{2} + (d-1)(m-d)
= (d-1)m-\binom{d}{2}.
\end{align*}

For $d=3$ we obtain $\dim\RelO=2m-3$.
%
%
%
This matches~the~dimensions of the first-order flex spaces listed in the tables.
This can be explained:

\begin{theorem} \label{res:generic_zonotopes}
    If $Z\subset\RR^d$ is a generic zonotope, then $\mreal^* (\mathcal Z,Z)\simeq\RelO$ and all $Q\in \mreal (\mathcal Z,Z)$ are zonotopes.
\end{theorem}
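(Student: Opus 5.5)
The plan is to show that every realization $Q\in\mreal(\mathcal Z,Z)$ has only parallelogram 2-faces. Once that is established, $Q$ is a zonotope, since $d\ge 3$ polytopes whose 2-faces are all parallelograms are zonotopes. The identification $\mreal^*(\mathcal Z,Z)\simeq\RelO$ should then follow by bookkeeping.

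For the first step, fix $Q\in\mreal(\mathcal Z,Z)$ and a 2-face $G$ of $\mathcal Z$. In $Z$ the face $G$ is a parallelogram with opposite sides of equal length $\ell_1,\ell_1,\ell_2,\ell_2$. Since $Q$ is equivalent to $Z$, the corresponding face of $Q$ is a planar convex quadrilateral with the same side lengths, in cyclic order $\ell_1,\ell_2,\ell_1,\ell_2$. A planar convex quadrilateral whose opposite sides are pairwise equal is a parallelogram; this is elementary, since the diagonal splits it into two congruent triangles by SSS, and convexity excludes the crossed case. This uses only the 2-faces as planar polygons, not just the facets, so for $d>3$ I will note that 2-faces of $Q$ are intersections of facets. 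Hence every 2-face of $Q$ is planar and has the same combinatorics as in $Z$.

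Next I check that the edges are parallel in the right pattern. Every edge of $Q$ belongs to a parallel class obtained by following parallelogram 2-faces. These "zones" are the same as in $\mathcal Z$, because they are combinatorially determined by the face lattice. Each zone consists of parallel edges of equal length. Choosing one representative segment $s_k$ per zone, a standard argument reconstructs $Q$ as $\sum_k s_k$ up to translation: every vertex is reached from a base vertex by walking along edges, and the parallelogram property makes the displacement path-independent. This gives a realization of a Minkowski sum of segments. Moreover $s_k$ has the same length as in $Z$, and the combinatorial type forces the same oriented matroid, and in particular the genericity (general position) of the segment directions. So $Q$ is a generic zonotope.

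For the bijection, I map $Q$ to the tuple of its generating segments, centered and considered modulo isometries. This lands in $\RelO$, the space of relative orientations of segments of fixed lengths whose sum has combinatorial type $\mathcal Z$. The inverse takes a configuration in $\RelO$ to its Minkowski sum. Both maps are continuous, because the segments are linear functions of the vertices, namely differences of adjacent vertices, and the sum is a polynomial map. So the identification is a homeomorphism, compatible with the constraints of \cref{sec:generic_zonotopes}.

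The main obstacle I expect is the reconstruction step in higher dimension $d>3$: showing that the zone structure, and the sign data needed for the sum to have the prescribed type, are genuinely forced by $\mathcal Z$. In $d=3$ the argument is clean via facets; in general I would cite the characterization of zonotopes as polytopes all of whose 2-faces are centrally symmetric.
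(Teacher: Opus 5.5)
Your proof is essentially the paper's. For $d\ge 3$ the paper also argues that every 2-face of $Q$ is a quadrilateral with the side lengths of the corresponding parallelogram of $Z$, hence a parallelogram, so $Q$ is a generic zonotope and lies in the family parametrized by $\RelO$. You make explicit what the paper leaves tacit: convexity ruling out the crossed quadrilateral, the classical characterization of zonotopes by centrally symmetric 2-faces, and the fact that the face lattice fixes the oriented matroid of the generators. Citing that characterization is the right call, because your path-independence sketch alone does not show that $Q$ equals $\sum_k s_k$.

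Your tacit restriction to $d\ge 3$ is also justified, even though the paper dismisses $d=2$ as straightforward. In fact the claim fails for $d=2$ and $m\ge 3$. Take the convex equilateral hexagon with edge directions $0^\circ,\alpha,120^\circ,120^\circ+\alpha,240^\circ,240^\circ+\alpha$, where $0^\circ<\alpha<120^\circ$ and $\alpha\ne 60^\circ$. It has the same combinatorial type and edge lengths as the regular hexagon, but it is not centrally symmetric, so it is not a zonotope. This is consistent with the dimension count $\dim\mreal^*(\mathcal Z,Z)=2m-3>m-1=\dim\RelO$ in the plane.
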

\begin{proof}
    The case $d=1$ is trivial. 
    If $d=2$ then $Z$ is a centrally symmetric polygon  and the proof is straight-forward as well.
    %

    If $d\ge3$, and since $Z$ is generic, all 2-dimensional faces of $Z$ are parallelograms.
    If $Q\in \mreal^* (\mathcal Z,Z)$, then all 2-faces of $Q$ are 4-gons with the same edge lengths as in $Z$.
    In particular, all 2-faces of $Q$ are parallelograms. In conclusion, $Q$ is itself a generic zonotope.

    Since the elements of $\mreal^* (\mathcal Z,Z)$ that are generic zonotopes are exactly parame\-trized by $\RelO$, it necessarily covers the entire configuration space.
\end{proof}

\section{Approximating Deformations of Polytopes}
\label{sec:approx_continuous_motions}

When encountering a flexible polytope, we often want to explicitly see how a flex looks like. However, determining the flexibility of a bar-and-joint framework is NP-hard \cite{abbott2008NPhard}, so there is little hope that our more complicated setting will be any easier. Instead, we shift our attention to numerically approximating deformations. This is not only relevant for the approximation of flexes and for their visualization, but also for inducing deformations for rigid polytopes by perturbing the lengths of some edges (\cf\ \Cref{section:edge-length-perturbations}). We thus apply the techniques that are discussed in this section for generating animations of the flexible Platonic and Archimedean solids, which are provided in the Supplementary Material \cite{zenodo_suppMaterial}, and to approximate deformations of the rigid dodecahedron which are induced by modifying the length of a single edge in \Cref{section:dodec-edge-length-perturbations}.

Assume we are given a non-trivial first-order flex $\dot{P}$ that extends to a flex. In other words, $P+\dot{P}$ lies in the linearization of the reduced metric realization space. In Adrian-Himmelmann \cite{deformationpaths}, a method for approximating the corresponding deformations based on Riemannian optimization is implemented as a combination of the metric projection to the closest point and homotopy continuation. It is called the \Def{Euclidean distance retraction} as it projects $P+\dot{P}$ to its closest point on the constraint set. 

The closest point problem can be described by a constrained polynomial optimization problem, so we can reformulate it as a (polynomial) Lagrange multiplier system whose zeros characterize the optimization problem's critical points (\cf\ \cite{Nocedal2006}). Since we already know a solution to this polynomial system in $u=P$ -- namely, the realization $P$ itself -- we can apply homotopy continuation methods consisting of a predictor step (\eg\ Euler's method) and a corrector step (\eg\ Newton's method). Such schemes are often applied for solving polynomial systems \cite{sommese2005numericalsolutionpolynomial}. 

In general, a direct application of the Newton corrector method may diverge or converge to the wrong point. Nevertheless, for the problem of computing the Euclidean distance retraction, the convergence of the predictor-corrector scheme to the closest point to $P+\dot{P}$ can be shown, as long as the linear step $P+\dot{P}$ stays sufficiently close to the underlying constraint set \cite{heaton2025euclideandistanceretraction}.

Nevertheless, three questions remain:
\begin{myenumerate}
    \item how to ensure that an initial flex extends to a continuous motion,
    \item how to compute the subsequent flex in the case where the linear space of first-order flexes has a dimension greater than one, and
    \item how to traverse singularities?
\end{myenumerate}

The Euclidean distance retraction has been refined for geometric constraint systems in Adrian-Himmelmann \cite{deformationpaths}, where all of these questions are addressed using heuristics and theoretical tools. If we assume that there exists a first-order flex that extends to a continuous motion (\eg\ by assuming that the polytope $P$ is flexible), this flex necessarily needs to be a non-blocked flex (\cf\ \cref{res:sotest}). 
For this reason, assume we know a basis of first-order flexes $\dot{P} =(\dot{P}_1, \dots, \dot{P}_r)$ and a basis of equilibrium stresses $\bs \xi = (\bs\xi _1,\dots, \bs\xi_s)$. Following \Cref{section:second-order-rigidity-check}, we solve the homogeneous polynomial system $\nabla_\mu E_{\dot{P},\,\bs \xi}(\lambda) = 0$
over $\mathbb{P}_\mathbb{R}^{r-1}$ for $\lambda_1^*,\dots,\lambda_r^*$ we can obtain a non-trivial flex $\dot{P}=\sum_{i=1}^r \lambda_i^*{\dot{P}}_i$ that is not blocked by any equilibrium stress (\cf\ \Cref{prop:stress-energy-condition}). We take it as our initial flex, as it heuristically has a good chance of providing a flex that extends to a continuous motion, since we have no examples of second-order flexible polytopes that are rigid.

For the second question, we return to Riemannian geometry: On a smooth manifold, the \Def{parallel transport} \cite[Section 10.3]{boumal2020intromanifolds} is a means to compare vectors that live in distinct tangent spaces. This inspires our approach: Given the previous flex $\dot{P}_\text{old}$ corresponding to the realization ${P}_\text{old}$ for which ${P}_\text{old}+\dot{P}_\text{old}$ has been projected to obtain the current realization $P$, we first compute a basis of the space of first-order flexes that we collect as the rows of a matrix $A$. To find the closest analogue to the flex $\dot{P}_\text{old}$ in the new realization $P$, we compute a solution to the minimization problem 
\[\min_{x\in \mathbb{R}^{d(V+F)}}||Ax-\dot{P}_\text{old}||^2,\]
which can, for instance, be solved by computing the Moore-Penrose pseudoinverse of $A$, which provides a minimum norm least squares solution \cite[Proposition
7.2.1]{allaire2008numericallinearalgebra}. This approach guarantees that we pick the closest subsequent non-trivial first-order flex and that our deformation path will be as smooth as possible.

Finally, algebraic constraint sets are distinct from smooth manifolds by having the potential to contain singularities. In our setting, such points can be characterized by the rigidity matrix dropping rank. They notoriously pose issues for smooth approximation approaches. Nevertheless, homotopy continuation schemes come with techniques such as the \Def{singular endgame} to approach singular solutions (\cf\ \cite[Section 10]{sommese2005numericalsolutionpolynomial}). Conversely, escaping singularities poses issues even for such schemes, although this ability is essential for our purposes. To tackle this problem, we take inspiration from Holmes-Cerfon, Theran and Gortler \cite{Holmes-Cerfon2021AlmostRigidity}, in which they describe an acceleration-based method to escape singularities by keeping track of a local, orthogonal frame along the deformation path. This information allows us to reliably traverse nodal-like singularities and to change direction in cusp-like singularities. 

All of these individual techniques for approximating deformation paths have been implemented in the \textsc{Julia} package \textsc{DeformationPaths.jl} \cite{deformationpathszenodo},
where all Platonic, Archimedean and Catalan solids that are discussed in this article are provided as test sets. 

\section{Computational data and explanation of Supplementary Material}
\label{appendix:comp-data-dodecahedron}
\label{appendix:supp_material}

\subsection{Regular dodecahedron}
\label{appendix:SM_dodecahedron}

Let $\phi$ be the golden ratio, $\frac{1}{2}(1+\sqrt{5})$. We use the following coordinates for the 20 vertices:
$$
\begin{array}{llll}
 \bs{p}_1=(1,1,1) & \bs{p}_2=(1,1,-1) & \bs{p}_3=(1,-1,1) & \bs{p}_4=(1,-1,-1) \\
 \bs{p}_5=(-1,1,1) &\bs{p}_6=(-1,1,-1) & \bs{p}_7=(-1,-1,1) & \bs{p}_8=(-1,-1,-1)  \\
 \bs{p}_9=(0,\phi,\frac{1}{\phi}) & \bs{p}_{10}=(\frac{1}{\phi},0,\phi) & \bs{p}_{11}=(\phi,\frac{1}{\phi},0) & \bs{p}_{12}=(0,\phi,\frac{-1}{\phi})\\
  \bs{p}_{13}=(\frac{-1}{\phi},0,\phi) & \bs{p}_{14}=(\phi,\frac{-1}{\phi},0) & \bs{p}_{15}=(0,-\phi,\frac{1}{\phi}) & \bs{p}_{16}=(\frac{1}{\phi},0,-\phi)\\  \bs{p}_{17}=(-\phi,\frac{1}{\phi},0) & \bs{p}_{18}=(0,-\phi,\frac{-1}{\phi}) & \bs{p}_{19}=(\frac{-1}{\phi},0,-\phi) & \bs{p}_{20}=(-\phi, \frac{-1}{\phi},0) 
\end{array}$$
The edges are given by the following pairs of vertices: 
$$\begin{array}{cccccc}
  (1, 9) & (1, 11) & (2, 11) & (2, 12) & (9, 12) & (1, 10) \\(5, 9) & (5, 
  13) & (10, 13) & (8, 20) & (17, 20) & (6, 17)\\  (6, 19) & (8, 19) & (7, 
  13) & (5, 17) & (7, 20) & (4, 18) \\ (4, 14) & (3, 14) &(3, 15) & (15, 
  18) & (6, 12) & (16, 19)\\  (2, 16) & (4, 16) & (11, 14) & (3, 10) & (7, 
  15) & (8, 18)
\end{array}$$
From this information, one can recover the facets and the coordinates of the normal vectors. Our specific computation uses the following vertex ordering for the twelve facets: 
$$\begin{array}{ccc}
     f_1:(1,9,5,13,10)&f_2:(1,9,12,2,11)&f_3:(1,10,3,14,11)\\f_4:(2,11,14,4,16) & 
     f_5:(2,12,6,19,16)&f_6:(3,10,13,7,15)\\f_7:(3,14,4,18,15)&f_8:(4,16,19,8,18)&
     f_9:(5,9,12,6,17)\\f_{10}:(5,13,7,20,17)&f_{11}:(6,17,20,8,19)&f_{12}:(7,15,18,8,20)
\end{array}$$

Symbolic expressions for a basis $(\dot{P}_1,\dots,\dot{P}_5)$ of first-order flexes are provided in the \textsc{Mathematica} data file \texttt{Dodecahedron/symbolicInfVFMatrix.mx}, which can be found in this article's Supplementary Material \cite{zenodo_suppMaterial}. Similarly, symbolic expressions for a basis $({\bs\xi}_1,\dots,{\bs\xi}_5)$ of the corresponding equilibrium stresses are provided in \texttt{Dodecahedron/symbolicStrVFMatrix.mx}. 
The \textsc{Mathematica} notebook \texttt{Dodecahedron/DodecahedronRigidity.nb} follows the process detailed in \cref{sec:so_rigidity_dod}, checking first prestress stability and then second-order rigidity.

\subsection{Truncated icosahedron}
\label{appendix:SM_TIco} 

    Analogously, we show that the truncated icosahedron is second-order rigid. 
    The corresponding computation is performed in the \textsc{Mathematica} notebook \texttt{TruncIco/TICORigidity.nb}.

\subsection{Truncated dodecahedron}
\label{appendix:SM_TDod} 

    For the truncated dodecahedron, we can show that it is \emph{not} second-order rigid using similar techniques when applying \Cref{gbasis}. This polytope has a four-dimensional space of first-order flexes and equilibrium stresses. In the \textsc{Mathematica} notebook \texttt{TruncDod/TDODRigidity.nb}, we show that the polynomial ideal $\langle Q_1,Q_2,Q_3,Q_4\rangle$ is positive-dimensional, as it is homogeneous and has nontrivial solutions.